\documentclass[]{amsart}

\usepackage[colorlinks=true,hyperindex]{hyperref}
\usepackage{wasysym}
\usepackage{amsmath}
\usepackage{amssymb}
\usepackage{amsthm}
\usepackage{tikz}
\usetikzlibrary{matrix,arrows,backgrounds,shapes.misc,shapes.geometric,patterns,calc,positioning}
\usetikzlibrary{calc,shapes}
\usepackage{wrapfig}
\usepackage{epsfig}  
\usepackage[margin=1.3in]{geometry}
\usepackage{color}	 %
\input{xy}
\xyoption{poly}
\xyoption{2cell}
\xyoption{all}

\usepackage{tikz,tikz-cd}
\usetikzlibrary{matrix,arrows,calc,shapes}
\usetikzlibrary{decorations.markings}
\usetikzlibrary{matrix}
\tikzset{->-/.style={decoration={
  markings,
  mark=at position #1 with {\arrow{stealth}}},postaction={decorate}}}


\newtheorem{thm}{Theorem}[section]

\newtheorem{lemma}[thm]{Lemma}
\newtheorem{corollary}[thm]{Corollary}
\newtheorem{conjecture}[thm]{Conjecture}

\newtheorem*{thm*}{Theorem}




\theoremstyle{definition}
\newtheorem{definition}[thm]{Definition}




\theoremstyle{remark}

\newtheorem{remark}[thm]{Remark}
\newtheorem{example}[thm]{Example}


\numberwithin{equation}{section}


\newcommand{\calf}{\mathcal{F}}
\newcommand{\cala}{\mathcal{A}}

\newcommand{\calg}{\mathcal{G}}
\newcommand{\calp}{\mathcal{P}}

\newcommand{\za}{\alpha}
\newcommand{\zb}{\beta}
\newcommand{\zd}{\delta}

\newcommand{\ze}{\epsilon}
\newcommand{\zg}{\gamma}

\DeclareMathOperator{\Brac}{Brac}






\newcommand{\rz}{(\mathbb{R}^2,\mathbb{Z}^2)}
\newcommand{\zgl}{\gamma_{AB}^L}
\newcommand{\zgr}{\gamma_{AB}^R}

\newcommand{\ma}[2]{m_{{#2},{#1}}}

\title{On the ordering of the Markov numbers}

\begin{document}
\begin{abstract}
The Markov numbers are the positive integers that appear in the solutions of the equation $x^2+y^2+z^2=3xyz$. These numbers are a classical subject in number theory and have important ramifications in hyperbolic geometry, algebraic geometry and combinatorics. 
 
 It is known that the Markov numbers can be labeled 
 by the lattice points $(q,p)$ in the first quadrant and below the diagonal whose coordinates are coprime. 
  In this paper, we consider the following question. Given two lattice points, can we say which of the associated Markov numbers is larger? A complete answer to this question would solve the uniqueness conjecture formulated by Frobenius in 1913. We give a partial answer in terms of the  slope of the line segment that connects the two lattice points. We prove that the Markov number with the greater $x$-coordinate is larger than the other if the slope is at least $-\frac{8}{7}$ and that it is smaller than the other if the slope is at most  $-\frac{5}{4}$. 

As a special case, namely when the slope is equal to 0 or 1, we obtain a proof of two conjectures from Aigner's book ``Markov's theorem and 100 years of the uniqueness conjecture''. 
\end{abstract}

\author{Kyungyong Lee}
\address{Department of Mathematics, 
University of Alabama, Tuscaloosa, AL 35487,
USA; and School of Mathematics, 
Korea Institute for Advanced Study, Seoul 02455, Republic of Korea}
\email{kyungyong.lee@ua.edu; klee1@kias.re.kr}
\author{Li Li}
\address{Department of Mathematics
and Statistics,
Oakland University, 
Rochester, MI 48309-4479, USA 
}
\email{li2345@oakland.edu}
\author{Michelle Rabideau}
\address{Department of Mathematics, University of Hartford,
West Hartford, CT 06117-1599}
\email{Rabideau@hartford.edu}
\author{Ralf Schiffler}
\address{Department of Mathematics, University of Connecticut, 
Storrs, CT 06269-3009, USA}
\email{schiffler@math.uconn.edu}
\thanks{The first author was supported by the University of Alabama, Korea Institute for Advanced Study, and the NSF grants DMS-1800207 and DMS-2042786,  and the fourth author was supported by the NSF grant DMS-1800860. }

\subjclass[2010]{Primary 11B83 
Secondary 13F60 
} 
\maketitle
%
%
\section{Introduction}
In 1879, Andrey Markov studied the equation
\begin{equation}
 \label{eq M} 
 x^2+y^2+z^2=3xyz,
\end{equation}
which is now known as the \emph{Markov equation}.  A positive integer solution $(m_1,m_2,m_3)$ of (\ref{eq M}) is called a \emph{Markov triple} and the integers that appear in the Markov triples are called \emph{Markov numbers}. 
 For example $(1,1,1),(1,1,2),(1,2,5),(1,5,13),(2,5,29)$ are Markov triples and $1,2,5,13,29$ are Markov numbers.
 
The Markov numbers are related to approximation theory. Given a real number $\za$, its Lagrange
number $L(\za)$ is defined as the supremum of all real numbers $L$ for which there exist infinitely many rational numbers $\frac{p}{q}$ such that 
$|\za-\frac{p}{q} | < \frac{1}{Lq^2}$. Thus the Lagrange number measures how well the real number $\za$ can be approximated by rational numbers.

The \emph{Lagrange spectrum} is defined as the set of all Lagrange numbers $L(\za)$, where $\za$ ranges over all irrational real numbers. Considering it as a subset of the real line, it is known that the Lagrange spectrum is discrete below 3, it is fractal between 3 and a number $F\approx 4.5278$ called the \emph{Freiman number}, and it is continuous above $F$.

 Markov proved in 1879 \cite{M} that the Lagrange spectrum below 3 is precisely the set of all 
 $\sqrt{9m^2-4} / m $, where $m$ ranges over all Markov numbers.

In 1913, Frobenius conjectured that, for every Markov number $m$, there exists a unique Markov triple in which $m$ is the largest number, see \cite{F}. This uniqueness conjecture is still open today. 
It has inspired a considerable amount of research and the Markov numbers have important ramifications in number theory, hyperbolic geometry, combinatorics and algebraic geometry. For an overview, we refer to the recent textbooks \cite{A,R}. This uniqueness conjecture also is equivalent to a conjecture saying that exceptional bundles on $\mathbb{P}^2$ are uniquely determined by their ranks up to shift and dual (see \cite{Rudakov} and \cite[Section 3]{Kuleshov}; we thank Michael Shapiro for pointing out to us the equivalence of the two conjectures).

\subsection{Aigner's conjectures}  The Markov numbers can be represented in a binary tree called the \emph{Markov tree}. This Markov tree is combinatorially equivalent to the Farey or Stern-Brocot tree of rational numbers. Thus there is a correspondence between $\mathbb{Q}_{[0,1]}$ and the Markov numbers by considering corresponding positions in these trees. We henceforth will refer to a Markov number as $m_{\frac{p}{q}}$ where $p<q$ for relatively prime positive integers $p$ and $q$.
For example,
$m_{\frac{1}{1}}=2,
m_{\frac{1}{2}}=5,
m_{\frac{1}{3}}=13,
m_{\frac{2}{3}}=29
$.

In his textbook \cite{A}, Martin Aigner formulates the following three conjectures on the ordering of the Markov numbers.

\begin{conjecture}
 \label{Aigner conjectures}\  
 
\begin{itemize}
\item[(a)] $m_{\frac{p}{q}} < m_{\frac{p}{q+i}}$  constant numerator conjecture,
\smallskip

\item[(b)] $m_{\frac{p}{q}} < m_{\frac{p+i}{q}}$ constant denominator conjecture,
\smallskip

\item[(c)] $m_{\frac{p}{q}} < m_{\frac{p-i}{q+i}}$ constant sum conjecture,
\end{itemize}
for all $i>0$ such that all the fraction in the index are reduced, positive and less than $1$.

\end{conjecture}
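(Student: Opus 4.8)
\emph{Proof proposal.} Throughout I use the dictionary of the introduction: a reduced fraction $\frac{p}{q}$ with $0<p<q$ corresponds to the lattice point $(q,p)$, whose first (``$x$'') coordinate is the denominator $q$. With this convention the three conjectures become statements about the slope of the segment joining two index points, and the plan is to read them off from the main comparison theorem of the abstract, which I abbreviate as follows: for two admissible index points, if the connecting segment has slope $\ge -\frac{8}{7}$ then the point of larger $x$-coordinate carries the larger Markov number, while if the slope is $\le -\frac{5}{4}$ it carries the smaller one.

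Conjectures (a) and (c) fall out at once. For (a) the points are $(q,p)$ and $(q+i,p)$; the segment is horizontal, of slope $0\ge -\frac{8}{7}$, so the point of larger $x$-coordinate $(q+i,p)$ wins and $m_{\frac{p}{q}}<m_{\frac{p}{q+i}}$. For (c) the points are $(q,p)$ and $(q+i,p-i)$; the segment has slope $\frac{-i}{i}=-1\ge -\frac{8}{7}$, so again the point of larger $x$-coordinate wins and $m_{\frac{p}{q}}<m_{\frac{p-i}{q+i}}$. The constant-denominator conjecture (b) is the genuine obstacle, because its two points $(q,p)$ and $(q,p+i)$ share the same $x$-coordinate: the connecting segment is vertical, so the phrase ``point of larger $x$-coordinate'' is vacuous and the theorem gives no information. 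The idea is to reach $(q,p+i)$ from $(q,p)$ by a two-step detour through an auxiliary admissible point, each step being an \emph{increasing} move in the sense of the theorem. Two kinds of increasing moves are available: a rightward move along a segment of slope $\ge -\frac{8}{7}$, and a leftward move along a segment of slope $\le -\frac{5}{4}$ (here the point we leave has the larger $x$-coordinate, hence the smaller number, so the point we reach is larger).

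Concretely, for the ``up-left then down-right'' detour I take the intermediate point $R=(q-1,p+2)$. The first segment $(q,p)\to R$ has slope $-2\le -\frac{5}{4}$, a leftward increasing move, so $m_{\frac{p}{q}}<m_R$; the second segment $R\to(q,p+i)$ has slope $i-2\ge -\frac{8}{7}$ for every $i\ge 1$, a rightward increasing move, so $m_R<m_{\frac{p+i}{q}}$. Chaining gives $m_{\frac{p}{q}}<m_{\frac{p+i}{q}}$, which is (b). This works whenever $R$ is admissible, i.e. $p+2<q-1$ and $\gcd(q-1,p+2)=1$. When $q-p$ is too small for $R$ to lie below the diagonal I use instead the symmetric ``down-right then up-left'' detour through $S=(q+1,p-1)$ (for $i=1$, $p\ge 2$) or $S=(q+1,p)$ (for $i\ge 2$): the first segment is rightward of slope $\ge -\frac{8}{7}$ and the second is leftward of slope $-(i+1)$ or $-i$, both $\le -\frac{5}{4}$, again two increasing moves. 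Because the down-right detour moves away from the diagonal, it is available precisely in the range where the up-left detour is cramped, so between the two every case is covered except those in which $p$ and $q-p$ are both small—a finite list—which I verify directly against the table of small Markov numbers.

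The main technical obstacle is not the slope arithmetic, which is immediate, but ensuring at each use that the auxiliary point is a genuine admissible index: that it lies strictly below the diagonal and, above all, that its coordinates are coprime. I expect to absorb the coprimality constraint by exploiting the freedom in the choice of detour—replacing $R=(q-1,p+2)$ by $(q-k,p+j)$ with $\frac{5k}{4}\le j\le i+\frac{8k}{7}$, a range containing an integer for every $k\le \frac{28i}{3}$, and similarly for $S$—so that among the several slope-valid candidates at least one has coprime coordinates, the finitely many residual configurations being checked by hand.
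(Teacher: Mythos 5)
Your slope computations for (a) and (c) are correct, but the overall strategy is circular in the context of this paper: Theorem \ref{main thm} is not available as an input for proving Conjecture \ref{Aigner conjectures}, because the logical order here runs the other way. The paper first proves the conjecture in the sharper form of Theorem \ref{thm 1new} (via the Ptolemy inequality, Corollary \ref{Ptolemy inequality}, applied to quadrilaterals three of whose sides are unit arcs of the triangulation), and then uses it repeatedly in Section \ref{sect main result}: the proof of Theorem \ref{main thm}(b) explicitly invokes the constant numerator and constant denominator inequalities (\ref{constant numerator}), (\ref{constant denominator}) and appeals to Theorem \ref{Aigner conjectures} to compare the six circled lattice points, and the proof of part (a) again uses Theorem \ref{Aigner conjectures} for the 21 circled points. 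So a derivation of the conjecture \emph{from} the main theorem cannot stand in for the paper's proof.

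Even granting Theorem \ref{main thm} as a black box, your treatment of (b) has a genuine gap: the admissibility of the detour points, which you defer to ``a finite list checked by hand,'' is not a finite problem. The slope window is intrinsically bounded, so for $i=1$ one can enumerate all two-step detours: the up-left candidates $(q-k,p+j)$ require an integer $j\in[\frac{5k}{4},\,1+\frac{8k}{7}]$, which exists only for $k\in\{1,2,3,4,7,8\}$, and the down-right candidates similarly exist only for $k\in\{1,2,3,4,7,8\}$ --- twelve points in all. The coordinate sums of these candidates are $q+p+\delta$ with $\delta\in\{-1,0,1,2\}$, so a prime $\ell$ dividing $q+p+\delta$ together with a congruence $q\equiv \pm k \pmod{\ell}$ forces that candidate to have non-coprime coordinates; choosing twelve distinct primes and solving by the Chinese remainder theorem produces infinitely many admissible $(q,p)$ for which \emph{every} candidate fails, so the residual set is infinite and your hand-check cannot close the argument (nor does the dichotomy ``small $p$ or small $q-p$'' capture the obstruction, which is arithmetic rather than geometric). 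The paper dissolves exactly this difficulty by a different device: the Markov distance $m_{q,p}$ of Definition \ref{def distance} extends the Markov function to all lattice points, coprime or not, the three step-one inequalities $m_{q,p}<m_{q,p+1}$, $m_{q,p}<m_{q+1,p}$, $m_{q,p}<m_{q+1,p-1}$ are proved for all $0\le p\le q$ by Ptolemy, and the case of general $i$ follows by chaining through intermediate lattice points with no coprimality bookkeeping whatsoever. If you want to salvage the detour idea, the natural fix is the same one: prove your monotonicity statements for the extended function $m_{q,p}$ --- but at that point you have essentially reconstructed Theorem \ref{thm 1new}.
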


The third and fourth authors proved the constant numerator conjecture recently using continued fractions \cite{RS}. The denominator conjecture and constant sum conjecture follows from the main result below (see Corollary \ref{cor:Aigner conjectures hold}).

\subsection{Main results}
In this article, we  prove Conjecture \ref{Aigner conjectures}. Indeed, we prove a much stronger result that we explain next. 

 Let $F=\{(q,p)\in \mathbb{Z}^2 \mid 1\le p<q,\ \gcd(p,q)=1\}$ denote the domain of the Markov number function $(q,p)\mapsto m_{\frac{p}{q}}$. For any slope $a\in\mathbb{Q}$ and any $y$-intercept $b\in\mathbb{Q}$, define the function 
\[M_{a,b}\colon \{ (x,y)\in F\mid y=ax+b\} \longrightarrow \mathbb{Z},\  (x,y)\mapsto m_{\frac{y}{x}}.\] 
We say that the Markov numbers {\em increase with $x$}  along the line $y=ax+b$ if  \[M_{a,b}(x,y) < M_{a,b}(x',y') \] whenever $x<x'$. Similarly, 
we say  the Markov numbers {\em decrease with $x$}  along the line if
\[M_{a,b}(x,y) > M_{a,b}(x',y') \] whenever $x<x'$.
 We say that the Markov numbers are \emph{monotonic} along the line $y=ax+b$ if they increase with $x$ or decrease with $x$ along the line. 

%

With this terminology, the three conjectures above 
can be restated by saying that the Markov numbers increase with $x$ along any line of slope $a=0$, $a=\infty$  and $a=-1$, respectively.
\smallskip

We are now ready to state our main result.
\begin{thm}\label{main thm}
\  
\begin{itemize}
\item[(a)] The Markov numbers increase with $x$ along any line $y=ax+b$ with slope $a\ge -\frac{8}{7}= -1.142857\cdots$. 

\item[(b)] The Markov numbers decrease with $x$ along any line $y=ax+b$ with slope $a\le -\frac{5}{4}=-1.25$.

\item[(c)] There exist  lines of slope  $-\frac{6}{5}$ and $-\frac{7}{6}$  along which the Markov numbers are not monotonic. 
\end{itemize}

\end{thm}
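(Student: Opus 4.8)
The plan is to reduce the monotonicity assertions (a) and (b) to a single sharp one-step comparison and then to settle that comparison using the continued fraction (equivalently, snake graph) description of the Markov numbers. First I would fix a line $y=ax+b$ and list the points of $F$ on it as $P_1,P_2,\dots$ in order of increasing $x$-coordinate. Since ``increase with $x$'' is a transitive relation, it suffices to prove $M_{a,b}(P_i)<M_{a,b}(P_{i+1})$ for consecutive points (and the reversed inequality for (b)); writing the slope in lowest terms as $a=-\tfrac sr$ with $r,s>0$ coprime, consecutive lattice points differ by the primitive vector $(r,-s)$, so everything reduces to comparing $m_{\frac yx}$ with $m_{\frac{y-ks}{x+kr}}$ for the first $k\ge 1$ that lands back in $F$. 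For fixed endpoints' $x$-coordinates, making the slope more negative lowers the right-hand index and hence (heuristically) lowers the competing Markov number, so I expect the extremal slopes $a=-\tfrac 87$ in (a) and $a=-\tfrac 54$ in (b) to be the decisive cases.

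Next I would translate the comparison into continued fractions. The Markov number $m_{\frac pq}$ equals the continuant attached to the continued fraction expansion of $\tfrac pq$, i.e.\ the number of perfect matchings of the associated Markov snake graph $\calg$, so comparing two Markov numbers becomes comparing two continuants. The engine is a monotonicity principle for continuants: termwise domination of the continued fraction data, read in the appropriate parity-adjusted order, forces domination of the continuant and hence of the Markov number. The real content is then to show that passing from the index $\tfrac yx$ to its neighbor along a line of slope $-\tfrac 87$ (respectively $-\tfrac 54$) alters the partial quotients in exactly such a dominating way; the slope hypothesis should appear precisely as the combinatorial condition that guarantees this domination.

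The hard part will be that these inequalities are \emph{sharp}. Because part (c) exhibits non-monotonic lines at the intermediate slopes $-\tfrac 65$ and $-\tfrac 76$, the continuant comparison at $a=-\tfrac 87$ and $a=-\tfrac 54$ admits no slack and cannot be closed by crude exponential size estimates on $m_{\frac pq}$. I therefore expect the bulk of the argument to be a precise case analysis of how the partial quotients of the right-hand index are assembled from those of the left-hand index, with the whole discrepancy concentrated in a bounded initial and terminal segment of the expansion that must be evaluated exactly rather than bounded. Matching these boundary blocks against the extremal slope conditions is where the constants $\tfrac 87$ and $\tfrac 54$ will be forced, and this is the step I anticipate being the main obstacle.

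Finally, part (c) is established by explicit witnesses. For each of the slopes $-\tfrac 65$ and $-\tfrac 76$ it suffices to produce three collinear points of $F$, ordered by increasing $x$, along which the three associated Markov numbers fail to be monotonic (e.g.\ satisfying $m(P)<m(P')>m(P'')$), and to verify the resulting numerical inequalities directly from the continued fraction formula. This reduces (c) to a finite computation and simultaneously certifies that the thresholds in (a) and (b) cannot be relaxed.
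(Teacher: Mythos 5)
Your part (c) coincides with the paper's (explicit numerical witnesses on the two lines), but for (a) and (b) the proposal has two genuine gaps. First, the reduction to the ``extremal'' slopes $-\frac{8}{7}$ and $-\frac{5}{4}$ is not valid. Monotonicity along every line of slope exactly $-\frac{8}{7}$ says nothing directly about a line of slope, say, $-\frac{10}{9}$ or $-1.1$: each slope $-\frac{s}{r}$ in lowest terms produces its own one-step comparison with step vector $(r,-s)$, so your scheme leaves infinitely many families of inequalities open, not a single decisive one. Your stated heuristic --- that making the slope more negative lowers the competing Markov number --- is refuted \emph{inside} the relevant window by part (c) itself (non-monotonicity at $-\frac{6}{5}$ and $-\frac{7}{6}$). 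The paper closes this interpolation problem with an extra geometric step you have no counterpart for: through one endpoint it draws an auxiliary line of the threshold slope, and when no lattice point of that line lies weakly southwest (resp.\ northeast) of the other endpoint, it runs a finite case analysis over the lattice points trapped between two consecutive lattice points of the auxiliary line (6 points for (b), 21 for (a)), finishing with the already-proved Aigner inequalities, i.e.\ monotonicity for slopes $0$, $\infty$, $-1$. Without something like this, proving the threshold-slope case does not prove the theorem.

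Second, the engine of your argument --- a parity-adjusted termwise domination of partial quotients forcing domination of continuants --- is never supplied, and the quantitative situation makes it implausible that any such local comparison exists. The paper does not prove the one-step inequality at slope $-\frac{8}{7}$ (that $m_{\frac{p}{q}} < m_{\frac{p-8}{q+7}}$) by continued fractions at all: it introduces a Markov distance $|AB|$ on lattice points, proves a Ptolemy-type inequality via skein relations, and uses it to slide the pair of points along its direction to infinity, where the ratio of the two Markov numbers converges to an explicit Pell-number limit; the whole theorem then rests on the closed-form inequality $3^{15}\bigl(\frac{2-\sqrt{2}}{4}\bigr)^{15}(3+\sqrt{8})^{7}>1$, whose left side is about $1.002$. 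A $0.2\%$ margin (and about $2.6\%$ for (b), where the analogous Fibonacci limit at the next step beyond the threshold drops to roughly $0.85<1$) cannot be detected by termwise domination of continued fraction entries: any genuine entrywise domination yields inequalities with slack, stable under perturbing a single entry, whereas here the inequality sits just above failure and flips to failure one lattice step past the threshold. So the step you yourself identify as ``the main obstacle'' is not merely unfinished --- the proposed mechanism is of the wrong shape for it, and an asymptotic/limit argument of the paper's kind (or something equally global) is needed.
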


\begin{corollary}\label{cor:Aigner conjectures hold}
 The conjectures \ref{Aigner conjectures} hold.
\end{corollary}
We also obtain the following result that strengthens both the constant numerator and the constant denominator conjecture.
\begin{thm}
\label{thm 2}
We have 
 \[ m_{\frac{p+i}{q+i}} \ge m_{\frac{p}{q+i}}+m_{\frac{p+i}{q}},\] whenever the indices are reduced fractions smaller than one. 
 \end{thm}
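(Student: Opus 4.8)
The plan is to work with the continued-fraction (equivalently, continuant) description of Markov numbers that underlies Theorem~\ref{main thm}, rather than with monotonicity alone. Indeed, the three indices are the corners $A=(q+i,p+i)$, $B=(q+i,p)$, $C=(q,p+i)$ of a lattice square whose fourth corner is $(q,p)$, and monotonicity by itself only yields $m_{\frac{p+i}{q+i}}>\max\bigl(m_{\frac{p}{q+i}},\,m_{\frac{p+i}{q}}\bigr)$; an additive statement requires the additive structure of continuants. Write $m_{\frac pq}=K(\mathbf w)$, where $\mathbf w=\mathbf w(p/q)$ is the integer word attached to $p/q$ and $K$ denotes the continuant (the numerator of the associated continued fraction). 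I would use that $K$ has nonnegative integer coefficients, is nondecreasing in each entry, and satisfies both the recursion $K(a_1,\dots,a_n)=a_1K(a_2,\dots,a_n)+K(a_3,\dots,a_n)$ and the gluing identity $K(U,V)=K(U)K(V)+K(U^-)K({}^-V)$, where $U^-$ and ${}^-V$ delete the last, respectively first, entry.

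First I would reduce the two-sided statement to a one-sided jump estimate. Since $\frac{p+i}{q}$ and $\frac{p+i}{q+i}$ share the numerator $p+i$, the already-established constant-numerator inequality gives $m_{\frac{p+i}{q}}<m_{\frac{p+i}{q+i}}$; the content of the theorem is the quantitative refinement
\[ m_{\frac{p+i}{q+i}}-m_{\frac{p+i}{q}}\ \ge\ m_{\frac{p}{q+i}}, \]
so it suffices to bound below the increment produced by enlarging the denominator from $q$ to $q+i$ at fixed numerator $p+i$. (By the symmetric computation one may instead bound $m_{\frac{p+i}{q+i}}-m_{\frac{p}{q+i}}\ge m_{\frac{p+i}{q}}$; I would keep whichever side matches the normal form of the words $\mathbf w$.)

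Next I would make the relation between the three words explicit. Passing from $\frac pq$ to $\frac{p+i}{q}$, to $\frac{p}{q+i}$, and to $\frac{p+i}{q+i}$ alters the continued fraction of $p/q$ only in a localized block, so that $\mathbf w(\tfrac{p+i}{q+i})$, $\mathbf w(\tfrac{p}{q+i})$ and $\mathbf w(\tfrac{p+i}{q})$ share a long common part and differ by a controlled insertion determined by $i$. Writing $\mathbf w(\tfrac{p+i}{q+i})=U\,\Box\,V$ with common blocks $U,V$ and expanding the distinguished block $\Box$ by the continuant recursion, the increment $m_{\frac{p+i}{q+i}}-m_{\frac{p+i}{q}}$ telescopes into a sum of products of sub-continuants, each nonnegative. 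I would then isolate inside this sum a single block whose continuant equals $m_{\frac{p}{q+i}}$ and discard the remaining nonnegative terms to obtain the desired inequality. Positivity of continuants, together with the hypothesis that all three fractions are reduced and less than one — which guarantees that the words are admissible with entries $\ge 1$ and that the block $\Box$ is nonempty — is exactly what signs every discarded term.

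The main obstacle is the explicit combinatorics of the word map $\mathbf w$ under the three shifts: the length of $\mathbf w(p/q)$ and the location of $\Box$ depend on the continued fraction of $p/q$, so the gluing point moves with $p,q,i$, and one must verify uniformly that the blocks $U,V$ really are common to all three words and that the term matching $m_{\frac{p}{q+i}}$ occurs with coefficient exactly one. I expect this to require a case analysis according to the last partial quotient of $p/q$ and the length/parity conventions in the definition of $\mathbf w$, mirroring the analysis already needed for Theorem~\ref{main thm}; the degenerate cases, where enlarging a coordinate by $i$ merges or splits adjacent blocks of the continued fraction, are where the reducedness and $<1$ hypotheses must be invoked most carefully.
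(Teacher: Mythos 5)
There is a genuine gap at the center of your plan: the claim that $\mathbf{w}\bigl(\tfrac{p+i}{q+i}\bigr)$, $\mathbf{w}\bigl(\tfrac{p}{q+i}\bigr)$ and $\mathbf{w}\bigl(\tfrac{p+i}{q}\bigr)$ ``share a long common part and differ by a controlled insertion determined by $i$'' is unsupported and false in general. The word attached to $p/q$ (the crossing sequence of the segment from the origin to $(q,p)$, equivalently the Christoffel-type word of slope $p/q$ that dictates the entries of the continued fraction in \cite{CS4}) depends on the slope \emph{globally}: replacing $(q,p)$ by $(q+i,p+i)$, $(q,p+i)$ or $(q+i,p)$ changes the slope and hence alters letters throughout the word, not in one localized block. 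For instance, for $(p,q,i)=(1,8,2)$ the three fractions are $\tfrac{3}{10}$, $\tfrac{1}{10}$, $\tfrac{3}{8}$; the associated words have different lengths and differ in many positions, so there is no common pair $U,V$ with a single varying block $\Box$, and your telescoping step has nothing to telescope along. The difficulty is also visible in the literature you lean on: \cite{RS} needed a full continued-fraction analysis just to compare \emph{two} fractions with the \emph{same} numerator, and obtained only the strict inequality $m_{\frac{p}{q}}<m_{\frac{p}{q+i}}$, not an additive lower bound. Your plan requires simultaneous control of three words of three different slopes together with an exact coefficient-one occurrence of a sub-block whose continuant equals $m_{\frac{p}{q+i}}$, and neither the proposal nor the continuant identities you list supply such a normal form.

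For comparison, the paper's proof avoids word combinatorics entirely. It extends $(q,p)\mapsto m_{q,p}$ to arbitrary, not necessarily coprime, lattice points via the Markov distance (Definition \ref{def distance}) and derives a Ptolemy inequality (Corollary \ref{Ptolemy inequality}) from the skein relations. Applied to $A=(0,0)$, $B=(q,p)$, $C=(q+1,p)$, $D=(q+1,p-1)$, where the three sides $\ell_{BC},\ell_{CD},\ell_{BD}$ are unit arcs of the triangulation $T$, the Ptolemy inequality degenerates to the one-step additive bound $m_{q+1,p}\ge m_{q,p}+m_{q+1,p-1}$ (Theorem \ref{thm 2'}), and iterating across the lattice square gives
\begin{equation*}
m_{q+i,p+i}\ \ge\ \sum_{j=0}^{i}\binom{i}{j}\, m_{q+j,\,p+i-j},
\end{equation*}
of which Theorem \ref{thm 2} retains only the two extreme terms $j=0$ and $j=i$. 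Note that this iteration passes through lattice points $(q+j,p+i-j)$ whose coordinates need not be coprime --- precisely the objects your continuant framework cannot even index, since those fractions are not reduced. If you wanted to salvage an argument in your style, you would at minimum have to replace the ``localized insertion'' picture by the snake-graph grafting/resolution identities of \cite{CS1,CS2,CS3} applied to whole snake graphs of different slopes; but that amounts to a combinatorial re-proof of the skein relation the paper already invokes, at which point the Ptolemy route is both shorter and strictly more general.
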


The order relations between the Markov numbers is illustrated in Figure \ref{figschema}. The figure shows a neighborhood of the point $(q,p)$ in the plane together with the lines of slope $ -\frac{8}{7}$ and $-\frac{5}{4}$, respectively. The lattice points in the green area correspond to Markov numbers that are strictly smaller than $m_\frac{p}{q}$ and the lattice points in the red area correspond to Markov numbers that are strictly larger than $m_\frac{p}{q}$. 
 In particular, if the uniqueness conjecture fails, then the second lattice point whose Markov number is equal to $m_\frac{p}{q}$ must lie in the grey area. So we have the following corollary of Theorem \ref{main thm}. 

\begin{corollary}
(a) If $m_\frac{p}{q}=m_\frac{p'}{q'}$ and $(p,q)\neq(p',q')$,  then $$ -\frac{5}{4}< \frac{p-p'}{q-q'}<-\frac{8}{7}$$

(b) Given  any Markov number $m_\frac{p}{q}$, there are at most $\lfloor\frac{5q+4p+44}{54}\rfloor$ pairs $(p',q')$ satisfying $$m_\frac{p'}{q'}=m_\frac{p}{q}, \quad (0<p'<q',\  \gcd(p',q')=1)$$
\end{corollary}
\begin{proof}
(a) follows immediately from Theorem \ref{main thm}.

For (b), denote $(p_i,q_i)$ ($1\le i\le \ell$, $1\le p_1\le p_2\le \cdots\le p_\ell$) the set of all pairs $(p',q')$ satisfying the condition in (b). For any $1\le i<\ell$, denote $a=p_{i+1}-p_i$, $b=q_i-q_{i+1}$. Then (a) implies $(a,b)\in\mathbb{Z}_{>0}^2$ and $8/7<a/b<5/4$. The smallest possible value of $a$ is $6$ (in which case $b=5$), so $p_{i+1}-p_{i}\ge 6$, thus $p_\ell\ge 1+6(\ell-1)$.  On the other hand, $p_\ell$ is strictly less than the $y$-coordinate of the intersection point of the two lines $y=x$ and $y-p=-\frac{5}{4}(x-q)$ (which has slope $-5/4$ and passes through the point $(q,p)$). Since the intersection is $(\frac{5q+4p}{9},\frac{5q+4p}{9})$, we have $\frac{5q+4p}{9}>p_\ell$. Then $\frac{5q+4p-1}{9}\ge  p_\ell\ge 1+6(\ell-1)$, which implies
$\ell\le \lfloor\frac{1}{6}(\frac{5q+4p-1}{9}-1)+1\rfloor=\lfloor\frac{5q+4p+44}{54}\rfloor$.
\end{proof}
Note that in the above proof we have not used the fact that $\gcd(p',q')=1$. So the corollary also holds if we replace $m_{\frac{p}{q}}$ and  $m_{\frac{p'}{q'}}$ by $m_{q,p}$ and $m_{q',p'}$ respectively, and without assuming $\gcd(p,q)=\gcd(p',q')=1$. 
\begin{figure}
\begin{center}
\huge\scalebox{0.5}{
\begingroup%
  \makeatletter%
  \providecommand\color[2][]{%
    \errmessage{(Inkscape) Color is used for the text in Inkscape, but the package 'color.sty' is not loaded}%
    \renewcommand\color[2][]{}%
  }%
  \providecommand\transparent[1]{%
    \errmessage{(Inkscape) Transparency is used (non-zero) for the text in Inkscape, but the package 'transparent.sty' is not loaded}%
    \renewcommand\transparent[1]{}%
  }%
  \providecommand\rotatebox[2]{#2}%
  \newcommand*\fsize{\dimexpr\f@size pt\relax}%
  \newcommand*\lineheight[1]{\fontsize{\fsize}{#1\fsize}\selectfont}%
  \ifx\svgwidth\undefined%
    \setlength{\unitlength}{364.6865503bp}%
    \ifx\svgscale\undefined%
      \relax%
    \else%
      \setlength{\unitlength}{\unitlength * \real{\svgscale}}%
    \fi%
  \else%
    \setlength{\unitlength}{\svgwidth}%
  \fi%
  \global\let\svgwidth\undefined%
  \global\let\svgscale\undefined%
  \makeatother%
  \begin{picture}(1,1.13611422)%
    \lineheight{1}%
    \setlength\tabcolsep{0pt}%
    \put(0,0){\includegraphics[width=\unitlength,page=1]{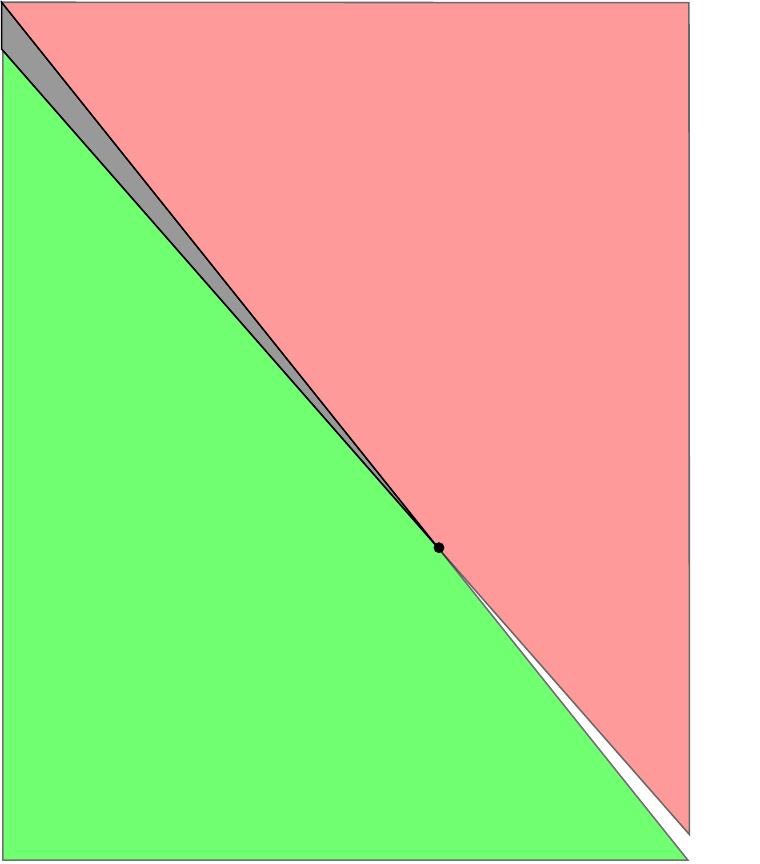}}%
    \put(0.60265655,0.4133866){\makebox(0,0)[lt]{\lineheight{1.25}\smash{\begin{tabular}[t]{l}$(q,p)$\end{tabular}}}}%
    \put(0,0){\includegraphics[width=\unitlength,page=2]{figschema.pdf}}%
    \put(0.43813137,0.61904251){\makebox(0,0)[lt]{\lineheight{1.25}\smash{\begin{tabular}[t]{l}$(q-4,p+5)$\end{tabular}}}}%
    \put(0.01859307,0.72598358){\makebox(0,0)[lt]{\lineheight{1.25}\smash{\begin{tabular}[t]{l}$(q-7,p+8)$\end{tabular}}}}%
    \put(0.12553433,0.33112422){\makebox(0,0)[lt]{\lineheight{1.25}\smash{\begin{tabular}[t]{l}smaller than $m_{\frac{p}{q}}$\end{tabular}}}}%
    \put(0.557412,0.78356752){\makebox(0,0)[lt]{\lineheight{1.25}\smash{\begin{tabular}[t]{l}larger than $m_{\frac{p}{q}}$\end{tabular}}}}%
    \put(0,0){\includegraphics[width=\unitlength,page=3]{figschema.pdf}}%
  \end{picture}%
\endgroup%
}
\caption{Relations between Markov numbers. Markov numbers at lattice points in the green region are strictly smaller than $m_{\frac{p}{q}}$ and those at lattice  points in the red region are strictly larger than $m_{\frac{p}{q}}$. }
\label{figschema}
\end{center}
\end{figure}

\subsection{Ingredients for the proof}
The proof uses a connection to cluster algebras. It was observed in \cite{P,BBH} that the Markov numbers can be obtained from the cluster variables in the cluster algebra of the once-punctured torus by specializing the initial cluster variables to 1. Moreover, the clusters in the cluster algebra then specialize to the Markov triples.  On the other hand, the cluster variables can be computed by a combinatorial formula given as a summation over the perfect matchings of a so-called snake graph \cite{MSW}. 

Inspired by these results, we introduce a semi-metric, which we call the \emph{Markov distance}, that associates to a pair $(A,B)$ of lattice points in the plane an integer $|AB|$ defined as the number of perfect matchings of an associated snake graph. If the point $A$ is the origin and the point $B$ has coordinates $(q,p)$ with $\gcd(p,q)=1$ then the Markov distance $|AB|$ is equal to the Markov number $m_{\frac{p}{q}}$. This interpretation   allows us to define a number $m_{q,p}$ for pairs of integers that are not relatively prime. We prove all our inequalities in the more general setting of the numbers $m_{q,p}$.

We then use the skein relations, a result from cluster algebras proved in \cite{MW},  to show that the Markov distance satisfies some fundamental relations. 
Once these are established, our main result follows from elementary planar geometry arguments.

The paper is organized as follows. Section \ref{sect 1} reviews the connection to cluster algebras, the definition of snake graphs and the skein relations. In Section \ref{sect 2}, we introduce the Markov distance and deduce several key properties. We then give an elementary proof of Conjecture \ref{Aigner conjectures} and Theorem \ref{thm 2} in Section \ref{sect 3}.  Section \ref{sect main result} is devoted to the proof of our main result Theorem \ref{main thm}.
We end the paper with a comment on the Markov distance in Section \ref{sect 5}.

Some of the results of this paper have also been  proved recently in several papers using methods quite different from ours. In an independent and simultaneous work  \cite{LPTV}, C. Lagisquet, E. Pelantov\'a, S. Tavenas, and L. Vuillon proved Conjecture \ref{Aigner conjectures} using transformations of lattice paths. Shortly thereafter in \cite{McShane}, G. McShane gave a proof of Conjecture \ref{Aigner conjectures} using methods from hyperbolic geometry, namely a relationship between Markov numbers and the lengths of closed simple geodesics on the punctured torus. More recently in \cite{Gaster}, J. Gaster has proved the conjectures \ref{decreasing slope}, \ref{increasing slope}, and \ref{decreasing-increasing slope} for Markov numbers (where the full conjectures are for Markov distances); note that Gaster discovered those bounds independently since we did not put these conjectures in the first draft of the paper on arXiv.

\section{Relation to cluster algebras}\label{sect 1}

\subsection{Cluster algebras} In \cite{FST} the authors associate a cluster algebra to an arbitrary surface with marked points. In this paper we only need the cluster algebra of the torus with one puncture. 

Let $(S,p)$ be the torus with one puncture $p$  and let $\pi\colon\rz\to(S,p)$ be the universal cover. Define a triangulation $T$ of $\rz$ as follows, see Figure~\ref{fig11}. The vertices are the integer lattice points $\mathbb{Z}^2$, and the labeled edges come in the following three families;
\begin{enumerate}
\item horizontal edges are of the form $(i,j){\quad\over\quad}(i+1,j)$ and are labeled $1$;
\item vertical edges are of the form $(i,j){\quad\over\quad}(i, j+1)$ and are labeled $2$;
\item diagonal edges are of the form $(i,j){\quad\over\quad}(i+1,j-1)$ and are labeled $3$.
 \end{enumerate}
\begin{figure}
\begin{center}
 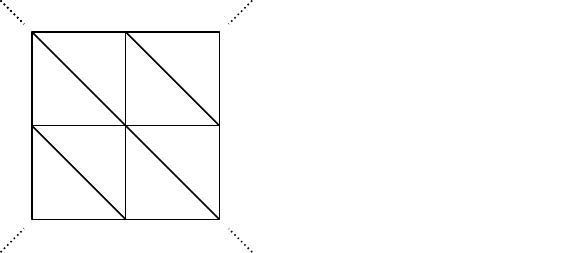
 \caption{On the left, the triangulation of the plane $\rz$ in black and an arc in red;  on the right,  the snake graph of the red arc.}
 \label{fig11}
\end{center}
\end{figure}

Via the covering map $\pi$, the triangulation $T$ induces a triangulation on the torus $(T,p)$. The quiver $Q$ of this triangulation is the following, see \cite{FST}.
\[\xymatrix{1\ar@<2pt>[rr]\ar@<-2pt>[rr] &&2\ar@<2pt>[ld]\ar@<-2pt>[ld]
\\ &3\ar@<-2pt>[lu]\ar@<2pt>[lu]
}\]
Let $\cala=\cala(Q)$ be the cluster algebra with trivial coefficients associated to this quiver. 

\subsubsection{Arcs} Arcs can be defined on an arbitrary surface with marked points. We shall only need them here in the plane $\rz$ with marked points the lattice points $\mathbb{Z}^2$, and in the torus with one puncture $(S,p)$ with marked point $p$.
 
 An \emph{arc} in a $\rz$ is a curve $\zg$ from a lattice point $A$ to a lattice point $B$ which does not pass through a third lattice point and does not cross itself. An \emph{generalized arc} in $\rz$ is a curve $\zg$ from a lattice point $A$ to a lattice point $B$ which does not pass through a third lattice point and has at most finitely many self-crossings. 
 
 An \emph{arc} in $(S,p)$ is a curve $\zg$ from $p$ to $p$ that does not visit $p$ except for its endpoints and that does not cross itself. It is well-known that the arcs on $(S,p)$ are precisely the images of line segments $O\!A$ in $\rz$ from the origin $O$ to a point $A=(q,p)$ such that $q,p$ are coprime integers. 
 
Arcs and generalized arcs are considered up to isotopy, where isotopies do not move curves over marked points.   In other words, two curves in $\rz$ represent the same arc if and only if they both start at the same lattice point, they both end at the same lattice point and the region bounded by the two curves does not contain any other lattice points.
We shall always assume that our (generalized) arcs are represented by curves that have a minimal number of crossing points with the triangulation $T$ as well as a minimal number of self-crossings. In particular. we assume that our generalized arcs do not contain any kinks (subcurves that are contractible loops). 

A \emph{closed loop} is a closed curve that is disjoint from all lattice points and has a finite number of self-crossings. A \emph{multicurve} is a finite multiset of generalized arcs and closed loops such that there are only finitely many  crossings among the curves. 

It was shown in \cite{FST} that the cluster variables in $\cala$ are in bijection with the arcs in $(S,p)$ and that the clusters are in bijection with triangulations.  In \cite{MSW} a 
 combinatorial formula was given for the cluster variables and in \cite{MSW2} this formula was used to associate cluster algebra elements to generalized arcs and closed loops as well as to multicurves.  If $C$ is a multicurve, we denote be $x_C$ the associated cluster algebra element.
 
 \subsubsection{Skein relations}
 Let $C$ be a multicurve and let $x$ be a crossing point in $C$. Thus $x$ could be a crossing point between two curves that are generalized arcs or closed loops, or $x$ could be a self-crossing point of a single generalized arc or a closed loop.    The \emph{smoothing} of $C$ at the point $x$ is defined as the pair of multicurves $C_+$ and 
$C_-$, where $C_+,C_-$ are the same as $C$ except for the local change that replaces the crossing $\times$ at the point $x$ with the pair of segments ${\asymp}$ or $\supset\subset$ respectively. Examples are shown in Figures \ref{fig 28case1}-\ref{fig 28case3-1}.

The following theorem is a special case of a result proved in \cite{MW} using hyperbolic geometry and in \cite{CS1,CS2,CS3} in a purely combinatorial way using snake graphs. 
\begin
{thm}[Skein relations]\label{skein relations}
 Let $C$ be a multicurve with a crossing point $x$ and let $C_-,C_+$ be the multicurves obtained by smoothing $C$ at $x$. Then we have the following identity in the cluster algebra associated to the surface.
 \[x_C=x_{C_+} + x_{C_-}\]
\end{thm}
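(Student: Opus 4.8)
\emph{Plan.} Since the three multicurves $C$, $C_+$ and $C_-$ coincide outside an arbitrarily small disk $D$ around the crossing point $x$, and differ inside $D$ only by the replacement of $\times$ with $\asymp$ or $\supset\subset$, the identity is local at $x$. The plan is to realize the cluster algebra elements $x_C$, $x_{C_+}$, $x_{C_-}$ as weighted lambda lengths in Penner's decorated hyperbolic geometry and to deduce the three-term relation from the hyperbolic Ptolemy relation applied at $x$; this is the route of \cite{MW}. The combinatorial route of \cite{CS1,CS2,CS3}, sketched below, proves the same identity through snake-graph calculus and is closer in spirit to the rest of this paper.

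First I would equip the once-punctured torus $(S,p)$ with a complete finite-area hyperbolic metric having a cusp at $p$, and fix a horocycle around $p$. Following Penner and Fomin--Thurston, every arc $\zg$ acquires a \emph{lambda length} $\lambda(\zg)=\exp(\ell(\zg)/2)$, where $\ell(\zg)$ is the signed length of the geodesic representative of $\zg$ measured between the horocycles at its endpoints, and the assignment $x_\zg\mapsto\lambda(\zg)$ identifies the cluster variable of $\zg$ with its lambda length after the standard specialization of coefficients. One extends this multiplicatively to multicurves, a closed loop contributing the trace of its monodromy; the resulting element is exactly the $x_C$ of \cite{MSW2}.

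The heart of the argument is to lift the crossing to the universal cover $\mathbb{H}$. The two strands meeting at $x$ lift to two geodesics $\tilde\zg_1,\tilde\zg_2$ that cross once, and their four ideal endpoints (lifts of $p$) are, in cyclic order, the vertices $1,2,3,4$ of an ideal quadrilateral whose diagonals are $\tilde\zg_1$ and $\tilde\zg_2$. The exact Ptolemy relation for lambda lengths then reads
\[\lambda(\tilde\zg_1)\,\lambda(\tilde\zg_2)=\lambda(s_{12})\,\lambda(s_{34})+\lambda(s_{23})\,\lambda(s_{41}),\]
where $s_{ij}$ denotes the side joining vertices $i$ and $j$. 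Projecting back, the two pairs of sides are precisely the reconnections made by the two smoothings, so $\{s_{12},s_{34}\}$ assembles into $C_+$ and $\{s_{23},s_{41}\}$ into $C_-$ (or vice versa); because every component of $C$, $C_+$, $C_-$ away from $x$ is unchanged, the remaining lambda-length factors are common to all three terms and multiplying through converts the Ptolemy identity into $x_C=x_{C_+}+x_{C_-}$.

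\emph{The combinatorial alternative and the main obstacle.} In the snake-graph model one writes $x_C$ as the matching-generating function of the snake (and band) graphs of the components of $C$, normalized by the crossing monomial. A crossing of curves induces a \emph{crossing of snake graphs}, and the core combinatorial input is a weight-preserving bijection between the perfect matchings of the product $\calg_1\times\calg_2$ and the disjoint union of the matchings of the graphs of $C_+$ and of $C_-$ (the resolution theorem of snake-graph calculus); summing the weights yields the same identity. The main obstacle in either approach is that $C$ is allowed to have \emph{self}-crossings and to contain \emph{closed loops}, so the two objects being ``multiplied'' may be two different lifts of a single arc, or a loop represented by a band graph, rather than an honest product of two distinct arcs. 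Handling these cases uniformly — checking that a self-crossing still lifts to a genuine ideal quadrilateral in $\mathbb{H}$, and that band graphs obey the same resolution bijection — together with verifying that the normalizing crossing monomials match on the two sides of the equation, is the delicate part; once it is settled the local Ptolemy relation, or its matching-theoretic avatar, gives the theorem in full generality.
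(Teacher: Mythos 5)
First, note the standard of comparison here: the paper does not prove Theorem~\ref{skein relations} at all. It imports it as a special case of results of \cite{MW} (proved there via hyperbolic geometry) and of \cite{CS1,CS2,CS3} (proved there purely combinatorially via snake-graph calculus). Your sketch faithfully follows both cited routes: the lambda-length/Ptolemy argument is indeed the MW strategy, and the weight-preserving matching bijection you describe is the resolution theorem of snake-graph calculus. In that sense your outline is aligned with the paper's ``proof,'' which is a citation.

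Judged as a proof, however, there is a genuine gap exactly where you flag one, and it is worth saying precisely why it is not a routine verification. Lifting to $\mathbb{H}$ and applying the Ptolemy relation to the ideal quadrilateral spanned by the two crossing lifts proves the identity only when both strands at $x$ belong to arcs \emph{and both smoothings again consist of arcs}. When a smoothing closes up a loop --- which is the generic outcome of smoothing a self-crossing, and is exactly the case this paper relies on in the proof of Theorem~\ref{thm 1}, where $C_+=\{\zg_1\zg_3,\,\zg_2\}$ contains the closed curve $\zg_2$ --- the quantity attached to the loop component is the trace of its monodromy, not the lambda length of a side of the lifted quadrilateral, and the projected Ptolemy relation does not have the right form: a side of the quadrilateral projects to a concatenation that wraps around the surface, and the deck-transformation bookkeeping converts products of lambda lengths into traces. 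In \cite{MW} these cases are handled by separate matrix computations resting on trace identities such as $\operatorname{tr}(AB)+\operatorname{tr}(AB^{-1})=\operatorname{tr}(A)\operatorname{tr}(B)$, and combinatorially they require the self-crossing and band-graph extensions of the resolution bijection developed in \cite{CS2,CS3}; likewise your ``multiplying through by the common factors'' step silently assumes the crossing monomials and coefficient normalizations match, which is part of what those references verify. So as written your argument establishes the arc--arc case and defers precisely the loop-producing smoothings --- the ones actually used in this paper --- to the literature, which, to be fair, is also what the paper itself does.
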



\subsubsection{Snake graphs}

Let $\zg$ be a generalized arc in $\rz$ from a point $A$ to a point $B$. Denote by $p_1,p_2,\ldots,p_n$ the crossing points of $\zg$ with $T$ in order along $\zg$. Let $i_j\in\{1,2,3\}$ be the label of the edge in $T$ that contains $p_j$.  The sequence $(i_1,i_2,\ldots, i_n)$ is called the \emph{crossing sequence} of $\zg$. It determines $\zg$ up to translation  and rotation by $180^\circ$. 
Note that $i_{j+1}\ne i_j$, for all $j=1,2,\ldots,n-1$, because $\zg$ has a minimal number of crossings with $T$. Moreover, since there are exactly three labels for the edges in $T$, we can define $b_j $ to be the unique label such that $\{i_j,i_{j+1},b_j\}=\{1,2,3\}$.

 To every (generalized) arc $\zg$ one can associate a planar graph $\calg(\zg)$ called the \emph{snake graph} of $\zg$ as follows. This construction was introduced in \cite{MS,MSW} for arcs on arbitrary surfaces. Here we give a simpler description adapted to our situation. 
For each $i=1,2,3$, we define two \emph{tiles}  $G_i^{\ze}$, with $\ze\in\{+,-\}$, as the labeled graphs shown in Figure \ref{fig tiles}.
\begin{figure}
\begin{center}
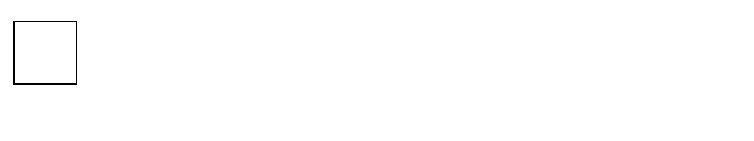
\caption{The six tiles that are the building blocks for the snake graphs}
\label{fig tiles}
\end{center}
\end{figure}
We remark that adding the diagonal from the northwest corner to the southeast corner in the tile $G_i^+$ and labeling this diagonal by $i$ will result in a graph that is a subgraph of the triangulation $T$. On the other hand, adding the same labeled diagonal to the tile $G_i^-$  will produce the mirror image of this subgraph. 

The snake graph $\calg(\zg)$ is defined recursively as follows. Let $i_1,i_2,\ldots,i_n$ be the crossing sequence of $\zg$. We start by laying down the tile $G_{i_1}^+$ exactly as it is shown in Figure \ref{fig tiles}. Next, we join the tile $G_{i_2}^{-} $ by identifying the unique edge labeled $b_1$ on the north or  east of $G_{i_1}^+$ to the unique edge labeled $b_1$ on the south or west of $G_{i_2}^-$, where we use the notation $b_1$ defined in the previous subsection. Note that this gluing edge is the north edge in $G_{i_1}^+$ if and only if is the south edge in $G_{i_2}^-$, and it is the east edge in $G_{i_1}^+$ if and only if is the west edge in $G_{i_2}^-$. 
Recursively, we join the tile $G_{i_{j+1}}^\ze$, with $\ze\in\{+,-\}$, to the graph consisting of $G_{i_1}^+,G_{i_2}^-,G_{i_3}^+,\ldots, G_{i_j}^{-\ze}$ by identifying the unique edge labeled $b_j$ on the north or  east of $G_{i_j}^{-\ze}$ to the unique edge labeled $b_j$ on the south or west of $G_{i_{j+1}}^\ze$.

For example, if $\zg$ is the arc shown in Figure \ref{fig11} then the crossing sequence is $3,1,3,2,3$ and the snake graph is built from the tiles $G_3^+,G_1^-,G_3^+,G_2^-,G_3^+$. This graph is shown in the right picture in Figure \ref{fig11}.

It is known that the unlabeled snake graphs are in bijection with continued fractions \cite{CS4}. This relation was crucial in the proof of part (a) of Conjecture \ref{Aigner conjectures} in \cite{RS}.
\subsubsection{Length function}
In order to introduce our length function we need the following concept from graph theory.
A \emph{perfect matching} of a graph is a subset of the set of edges  such that every vertex of the graph belongs to exactly one edge of the perfect matching.
\begin{definition}
 Let $\zg$ be a generalized arc in $\rz$. We define its \emph{length} $|\zg|$ to be the number of perfect matchings of its snake graph $\calg(\zg)$. 
\end{definition}
 Note that $|\zg|$ is a positive integer and  $|\zg|=1$ if and only if $\zg\in T$.

\subsubsection{Relation between the cluster algebra and the Markov numbers}
%
As we have seen above, the cluster variables in the cluster algebra $\cala$ of the once-punctured torus correspond to the line segments $\ell_{\frac{p}{q}}$  from $(0,0)$ to $(q,p)$ in $\rz$ with $\gcd(p,q)=1$. We denote the cluster variable of $\ell_{\frac{p}{q}}$ by $x_{\frac{p}{q}}$.  Each cluster variable is a Laurent polynomial in three variables $x_1,x_2,x_3$, which can be computed by a combinatorial formula as a sum over all perfect matchings of the snake graph  $\calg(\ell_{\frac{p}{q}})$ associated to $\ell_{\frac{p}{q}}$ \cite{MSW}. 
 
 It is shown in \cite{BBH, P} that the specialization of the cluster variable $x_{\frac{p}{q}}$ at $x_1=x_2=x_3=1$ is equal to the Markov number $m_{\frac{p}{q}}$ of slope $\frac{p}{q}$, where $0<p<q$ and $\gcd(p,q)=1$. 
Therefore, the Markov number is equal to the number of perfect matchings of the snake graph $\calg({\ell_\frac{p}{q}})$. In terms of our length function, this can be restated as follows. 
\begin{equation}\label{eq markov=length}
m_{\frac{p}{q}}= |{\ell_\frac{p}{q}}|.
 \end{equation}
              Thus looking at the notation $m_{\frac{p}{q}}$ from another perspective, $\frac{p}{q}$ is not only the corresponding rational from the Farey tree, but it also refers to the slope of the line from the origin to $(q,p)$ from which a Markov snake graph can be constructed and the associated continued fraction yields the Markov number $m_{\frac{p}{q}}$ itself.
 In Section \ref{sect 2}, we shall generalize this correspondence to pairs of integers $(q,p)$ that are not necessarily coprime.


It follows from the above discussion that every skein relation $x_C=x_{C_-}\!+x_{C_+}$ induces an integer equation  $|x_C|=|x_{C_-}|+|x_{C_+}|$. In particular, we have the following special case.
\begin{corollary}[Generalized Ptolemy equality] \label{lem 11}
 Let $\za,\zb,\zd,\ze$ be arcs in $\rz$ that form a quadrilateral  $Q$ 
 that does not contain
  any lattice points besides the four endpoints of the arcs. Suppose that $\za,\zd$ are opposite sides, and $\zb,\ze$ are opposite sides of $Q$. Let $\zg_1$ and $\zg_2$ be the two diagonals of $Q$. Then the length function satisfies 
 \[| \zg_1|\,|\zg_2| = 
 |\za|\,|\zd| +|\zb|\,|\ze|. \]
 \end{corollary}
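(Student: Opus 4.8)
The plan is to realize the product $|\zg_1|\,|\zg_2|$ as the length of a single multicurve and then to apply the skein relation of Theorem~\ref{skein relations} at the point where the two diagonals of $Q$ cross. Label the vertices of $Q$ by $P_1,P_2,P_3,P_4$ in cyclic order, so that the sides are $\za=P_1P_2$, $\zb=P_2P_3$, $\zd=P_3P_4$, $\ze=P_4P_1$ and the diagonals are $\zg_1=P_1P_3$, $\zg_2=P_2P_4$. Since $\zg_1,\zg_2$ are the diagonals of the quadrilateral, they meet in a single interior crossing point $x$, which is not a lattice point. I would consider the multicurve $C=\zg_1\cup\zg_2$. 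Because the cluster algebra element of a multicurve is the product of the elements of its components, we have $x_C=x_{\zg_1}x_{\zg_2}$, and since the length $|\zg|$ of an arc is exactly the specialization of $x_\zg$ at $x_1=x_2=x_3=1$, the specialization of $x_C$ equals $|\zg_1|\,|\zg_2|$.

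Next I would analyze the smoothing of $C$ at $x$. Around $x$ the four strands of $C$ emanate toward $P_1,P_2,P_3,P_4$ in this cyclic order, since $x$ lies on both diagonals and the $P_i$ occur in cyclic order on $Q$. There are exactly two non-crossing ways to reconnect four cyclically ordered strands, namely the pairings $(P_1P_2)(P_3P_4)$ and $(P_2P_3)(P_4P_1)$, and these are precisely the two pairings realized by the smoothings $C_+$ and $C_-$ (in one order or the other). Thus one smoothing consists of an arc from $P_1$ to $P_2$ together with an arc from $P_3$ to $P_4$, and the other of an arc from $P_2$ to $P_3$ together with an arc from $P_4$ to $P_1$.

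The step that uses the hypothesis on $Q$, and the one I expect to be the only genuine point of care, is to verify that these reconnected curves are, up to isotopy, the sides of $Q$. The reconnected arc from $P_1$ to $P_2$ runs along $\zg_1$ from $P_1$ to a neighborhood of $x$ and then along $\zg_2$ from that neighborhood to $P_2$, so it stays inside the triangle with vertices $P_1,x,P_2$, which is contained in $Q$. Since $Q$ contains no lattice points other than its four vertices, this triangle contains none in its interior, so the region bounded by the reconnected arc and the straight side $\za=P_1P_2$ contains no lattice point; by the isotopy criterion recorded in Section~\ref{sect 1} the two curves represent the same arc $\za$. Arguing identically for the three remaining reconnected arcs identifies $C_+$ with $\za\cup\zd$ and $C_-$ with $\zb\cup\ze$ (or vice versa).

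Finally I would combine these identifications with the skein relation. Theorem~\ref{skein relations} gives $x_C=x_{C_+}+x_{C_-}$, that is $x_{\zg_1}x_{\zg_2}=x_\za x_\zd+x_\zb x_\ze$. Specializing $x_1=x_2=x_3=1$ and using that the length function is multiplicative over the components of a multicurve then yields $|\zg_1|\,|\zg_2|=|\za|\,|\zd|+|\zb|\,|\ze|$, which is the desired equality.
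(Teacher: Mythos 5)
Your proposal is correct and takes essentially the same route as the paper: the paper presents Corollary~\ref{lem 11} as an immediate special case of the skein relation (Theorem~\ref{skein relations}) applied at the crossing of the two diagonals, followed by specialization at $x_1=x_2=x_3=1$, which is precisely your argument. The only detail the paper leaves implicit is the identification of the two smoothings with the pairs of opposite sides, which you verify carefully using the hypothesis that $Q$ contains no lattice points besides its vertices.
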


\subsubsection{Bracelets}

Let $\zeta$ be a closed simple curve. Following \cite{MSW2}, we let $\Brac_k\zeta$ be the $k$-fold concatenation of $\zeta $ with itself, see Figure~\ref{fig bracelet}. $\Brac_k\zeta$ is called the $k$-bracelet of $\zeta$. 
\begin{figure}
\begin{center}
\scalebox{0.5}{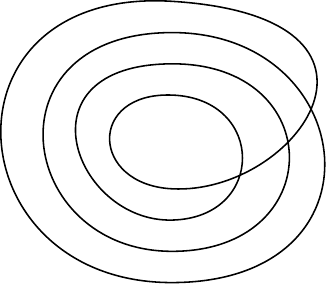}
\caption{A 4-bracelet}
\label{fig bracelet}
\end{center}
\end{figure}

\begin{lemma} Let  $\zeta$ be a closed simple curve in $\rz$ that is contractible to a lattice point.
Then $|\Brac_k \zeta|=2$, for all $k>0$.
\end{lemma}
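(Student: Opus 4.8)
The plan is to reduce the statement to a local computation near the lattice point $P$ to which $\zeta$ contracts, and then to induct on $k$ using a Chebyshev-type recurrence extracted from Theorem~\ref{skein relations}. After an isotopy we may assume that $\zeta$, and with it every bracelet $\Brac_k\zeta$, lies inside a disk $D$ centered at $P$ that contains no lattice point other than $P$. Inside $D$ the only edges of $T$ are the six half-edges emanating from $P$, so $\zeta$ meets $T$ in exactly six points, with cyclic crossing sequence $1,2,3,1,2,3$, and $\Brac_k\zeta$ is the curve winding $k$ times around $P$ with $k-1$ self-crossings. The two facts I would isolate are: (i) $|\zeta|=2$; and (ii) the recurrence $|\Brac_{k+1}\zeta| = |\zeta|\,|\Brac_k\zeta| - |\Brac_{k-1}\zeta|$ for $k\ge 2$, together with the separate evaluation $|\Brac_2\zeta| = |\zeta|^2 - 2$.

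For (ii), smoothing one self-crossing of $\Brac_{k+1}\zeta$ via Theorem~\ref{skein relations} expresses $x_{\Brac_{k+1}\zeta}$ as a sum of two terms which, after accounting for the contractible loops produced by the resolution, specialize at $x_1=x_2=x_3=1$ to $|\zeta|\,|\Brac_k\zeta|$ and $-|\Brac_{k-1}\zeta|$; this is exactly the bracelet Chebyshev relation of \cite{MSW2}. The same analysis applied to the single self-crossing of $\Brac_2\zeta$ produces on one side two disjoint parallel copies of $\zeta$ (value $|\zeta|^2$, since the element of a disjoint union is the product of the elements) and on the other an \emph{empty} contractible loop bounding a disk with no marked point, which contributes the value $-2$ in the convention of \cite{MSW2}; hence $|\Brac_2\zeta| = |\zeta|^2 - 2$. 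For (i), I would compute $|\zeta|$ directly from its band graph, which is the cyclic strip of six tiles obtained by gluing the two ends of the snake graph of the sequence $1,2,3,1,2,3$: a direct count of its good perfect matchings yields exactly two, so $|\zeta|=2$.

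Combining these, $|\Brac_1\zeta| = |\zeta| = 2$ and $|\Brac_2\zeta| = |\zeta|^2 - 2 = 2$, while for $k\ge 2$ the recurrence reads $|\Brac_{k+1}\zeta| = 2\,|\Brac_k\zeta| - |\Brac_{k-1}\zeta|$, and the affine map $(a,b)\mapsto 2b-a$ fixes the pair $(2,2)$; hence $|\Brac_k\zeta| = 2$ for all $k>0$ by induction. The step I expect to be the main obstacle is the local analysis of the smoothings: pinning down which resolution yields which bracelet and exactly how many contractible loops are created, and in particular verifying in the base case that the second resolution is an empty contractible loop (value $-2$) rather than a loop still enclosing $P$ (value $+2$), since this is precisely what produces the crucial minus sign in the recurrence. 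Establishing $|\zeta|=2$ rigorously, i.e. that the cyclic band graph contributes exactly two good matchings, together with correctly importing the sign conventions for closed and contractible curves from \cite{MSW2}, is where the real care is needed; once (i) and (ii) are secured the induction is immediate.
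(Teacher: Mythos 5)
Your proposal is correct and follows essentially the same route as the paper, whose entire proof is the single line ``this follows easily from the skein relations and induction'': you have simply carried out that argument in full, and your details are consistent with what the paper itself records later, namely the Chebyshev recursion $\Brac_k\zeta=\zeta\,\Brac_{k-1}\zeta-\Brac_{k-2}\zeta$ of \cite{MSW2} (equation~(\ref{eqbrac})) and the $-2$ convention for an empty contractible loop. Your base cases are also right: the cyclic crossing sequence $1,2,3,1,2,3$ around a lattice point gives a parabolic band (trace $2$, i.e.\ exactly two good matchings), so $|\zeta|=2$, and with $|\Brac_2\zeta|=|\zeta|^2-2=2$ the induction via $(a,b)\mapsto 2b-a$ closes as you say.
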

\begin{proof}
This follows easily from the skein relations and induction.
\end{proof}

%
%
\section{The Markov distance}\label{sect 2}
In this section we introduce a semi-metric called Markov distance between to lattice points as the number of perfect matchings of an associated snake graph. 

 Let $A$ and $B$ be two lattice points and $\ell_{AB}$ the line segment from $A$ to $B$. Denote by $(s,r)$ the coordinates of $B-A$. Thus $\ell_{AB}$ has slope $\frac{r}{s}$. 
If $\gcd(r,s)=1$, then there are no lattice points on the line segment $\ell_{AB}$ besides $A$ and $B$, and in this case, the line segment $\ell_{AB}$ is an arc. Otherwise, we let $p,q $ be relatively prime integers such that  $\frac{r}{s}=\frac{p}{q}$ and denote by
 $P_0=A,P_1,P_2,\ldots,P_t=B$ the sequence of lattice points along $\ell_{AB}$. Thus $t=\gcd(r,s)$ and  $P_i=A+i(q,p)$, for $i=1,2,\ldots,t$.
\begin{definition}\label{def:left and right deformation}
With the notation above, we define the \emph{left deformation} $\zg_{AB}^L$ of the line segment $\ell_{AB}$
to be 
the arc  from $A$ to $B$ that is an infinitesimal deformation of $\ell_{AB}$ passing on the left of the points $P_1,P_2,\ldots,P_t$ with respect to the orientation from $A$ to $B$, see Figure~\ref{fig 21}.  

Similarly, the   \emph{right deformation} $\zg_{AB}^R$ of $\ell_{AB}$ 
is the arc  from $A$ to $B$ that is an infinitesimal deformation of the line segment $\ell_{AB}$ passing on the right of the points $P_1,P_2,\ldots,P_t$.\end{definition}

In the case where $\ell_{AB}$ is already an arc, we have $\zg^L_{AB}=\zg^R_{AB}=\ell_{AB}$.

\begin{lemma}
 \label{lem 21}
 Let $A$ and $B$ be two lattice points in the plane and let $\zgl$ and $\zgr$ be the left and right deformations of the line segment $\ell_{AB}$. Then both deformations have the same length, thus
 \[|\zgl|=|\zgr|.\]
\end{lemma}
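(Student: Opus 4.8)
The plan is to realize the two deformations as images of one another under a symmetry of the triangulated plane $\rz$, so that their snake graphs become isomorphic and hence have the same number of perfect matchings. Write $t=\gcd(r,s)$ and recall $P_i=A+i(q,p)$, so that $\zgl$ and $\zgr$ differ from $\ell_{AB}$ only by an infinitesimal bulge to the left, respectively the right, of each interior lattice point $P_1,\dots,P_{t-1}$. Let $M=\tfrac12(A+B)$ be the midpoint of $\ell_{AB}$ and let $\rho$ denote the rotation of the plane by $180^\circ$ about $M$.

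First I would check that $\rho$ is a symmetry of the labeled triangulation $T$. Since $M=A+\tfrac{t}{2}(q,p)$ and $A\in\mathbb{Z}^2$, the point $M$ lies in $\tfrac12\mathbb{Z}^2$; writing $M=(a/2,b/2)$ with $a,b\in\mathbb{Z}$, the map $\rho$ is $(x,y)\mapsto(a-x,b-y)$, which carries $\mathbb{Z}^2$ bijectively onto itself. A $180^\circ$ rotation sends every line to a parallel line, so it maps horizontal edges to horizontal edges, vertical to vertical, and diagonal (slope $-1$) to diagonal; thus $\rho$ preserves $T$ together with its edge labels $1,2,3$. Consequently $\rho$ sends any arc $\zeta$ to an arc whose snake graph is isomorphic to $\calg(\zeta)$, which is exactly the content of the remark that the crossing sequence determines an arc up to translation and rotation by $180^\circ$; since isomorphic graphs have the same number of perfect matchings, $|\zeta|=|\rho(\zeta)|$.

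Next I would identify $\rho(\zgl)$ with $\zgr$. The rotation $\rho$ fixes $\ell_{AB}$ setwise, interchanges the endpoints $A\leftrightarrow B$, and permutes the interior points by $P_i\leftrightarrow P_{t-i}$; in particular it permutes $\{P_1,\dots,P_{t-1}\}$ among themselves. Because $\rho$ is orientation preserving on the plane but reverses the direction of travel along the segment, it converts a bulge on the left of each $P_i$ into a bulge on the right of the corresponding $P_{t-i}$. Hence $\rho(\zgl)$ is an infinitesimal deformation of $\ell_{AB}$ passing on the right of every interior lattice point, and since the sliver between $\rho(\zgl)$ and $\zgr$ contains no lattice point, the two represent the same arc. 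Therefore $|\zgl|=|\rho(\zgl)|=|\zgr|$, as claimed.

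The part requiring the most care is the bookkeeping of the left/right bulge under $\rho$: one must confirm that reversing the traversal direction, while keeping the plane's orientation, genuinely exchanges the two sides at every interior point, and that $M$ is a center of symmetry of $T$ in both parities of $t$ (for $t$ even, $M=P_{t/2}$ is a lattice point; for $t$ odd, $M$ is a half-lattice point). Once this is settled the conclusion is immediate. As an alternative, one could argue by induction on $t$ using the skein relations of Theorem~\ref{skein relations} to resolve the self-crossings that appear when $\zgl$ and $\zgr$ are superimposed, but the symmetry argument above is shorter and avoids any case analysis of matchings.
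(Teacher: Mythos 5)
Your proof is correct, but it takes a genuinely different route from the paper. The paper argues directly on crossing sequences: it decomposes $\ell_{AB}$ into identical segments, uses the palindromic property of the crossing sequence of a straight segment, computes the local crossing patterns at each interior lattice point ($1,3,2$ for $\zgl$ versus $2,3,1$ for $\zgr$), concludes that the two crossing sequences are reverses of one another, and then invokes \cite[Proposition~3.1]{CS5} to say that reversal rotates the snake graph by $180^\circ$ and hence preserves the matching count. You instead exhibit a single global symmetry --- the half-turn $\rho$ about the midpoint $M\in\tfrac12\mathbb{Z}^2$, which does preserve the labeled triangulation (all three edge families go to parallel edges between lattice points) and does carry $\zgl$ to $\zgr$, with your left/right bookkeeping at the interior points $P_i\leftrightarrow P_{t-i}$ checking out in both parities of $t$. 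This is cleaner and avoids both the palindrome observation and the local crossing analysis. One caveat worth making explicit: since $\rho$ interchanges $A$ and $B$, it reverses the direction of traversal, so $\rho(\zgl)$ carries the crossing sequence of $\zgl$ read from $B$ to $A$, while $\zgr$'s snake graph is built from the sequence read from $A$ to $B$; identifying $|\rho(\zgl)|$ with $|\zgr|$ therefore still tacitly uses that the length of an (unoriented) arc is independent of the traversal direction --- which is precisely the $180^\circ$-rotation fact from \cite{CS5} that the paper cites. So your symmetry argument streamlines the proof but does not entirely dispense with that ingredient; it should be cited or at least noted for the argument to be complete. Also, your appeal to the paper's remark that ``the crossing sequence determines $\zg$ up to translation and rotation by $180^\circ$'' is the converse of what you need; the correct justification is simply that the snake graph is constructed from the crossing sequence alone and that a label-preserving symmetry of $T$ preserves crossing sequences, which is immediate from the construction.
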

\begin{figure}
\begin{center}
 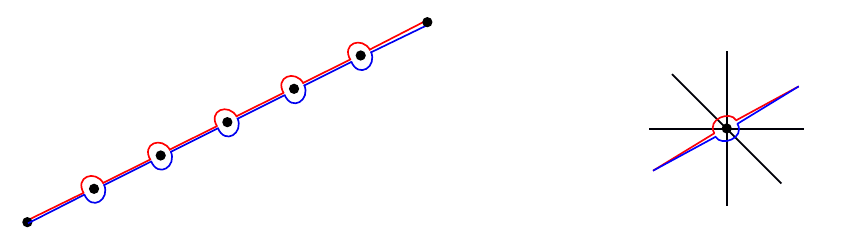
 \caption{The arcs $\zgl$ and $\zgr$ (left) and their local crossing pattern with the triangulation in the vicinity of a lattice point (right).}\label{fig 21}
\end{center}
\end{figure}
\begin{proof}
 The line segment $\ell_{AB}$ can be decomposed as the concatenation of line segments $\ell_{AB}=\ell_{AP_1}\ell_{P_1P_2}\ell_{P_2P_3}\cdots\ell_{P_{t-1}B}$. Each of the line segments $\ell_{P_iP_{i+1}}$ is an arc with the same crossing sequence $(i_1,i_2,\ldots,i_n)$. Moreover, since the crossing sequences of line segments are palindromic, we have $(i_1,i_2,\ldots,i_n)=(i_n,i_{n-1},\ldots,i_1)$.
 
 Every lattice point $P_i$ is incident to six edges of the triangulation, see the right picture in Figure~\ref{fig 21}. The arc $\zgl$ crosses the three edges labeled $1,3,2$ in order on the left of $P_i$ and the arc $\zgr$ crosses the edges labeled $2,3,1$ in order on the right of $P_i$. Thus the crossing sequence of $\zgl$ is $(i_1,i_2,\ldots,i_n,1,3,2,i_1,i_2,\ldots,i_n,1,3,2,\ldots,i_1,i_2,\ldots,i_n)$ and the  crossing sequence of $\zgr$ is $(i_1,i_2,\ldots,i_n,2,3,1,i_1,i_2,\ldots,i_n,2,3,1,\ldots,i_1,i_2,\ldots,i_n)$. Note that the second sequence is the reverse of the first, since $(i_1,i_2,\ldots,i_n)=(i_n,i_{n-1},\ldots,i_1)$.
 This implies that the snake graph of $\zgl$ is obtained from the snake graph of $\zgr$ by a rotation of  $180^\circ$ \cite[Proposition 3.1]{CS5}. In particular, the two snake graphs have the same number of perfect matchings, thus $|\zgl|=|\zgr|.$
\end{proof}
The lemma motivates the following definition. 
\begin{definition}\label{def distance}
The \emph{Markov distance} $|AB|$ between two lattice points $A,B$ in the plane is the semi-metric defined by the integer \[|AB|\,=\,|\zg^L_{AB}|.\]
We also set $|AA|=0$.

The Markov distance between the origin $O=(0,0)$ and a point $A=(q,p)$ will be denoted by 
\[m_{q,p}=|OA|. \]
\end{definition}

\begin{remark}
 Thus if $q,p$ are coprime then $m_{q,p}=m_{\frac{p}{q}}$ is the Markov number of slope $\frac{p}{q}$. 
\end{remark}

\smallskip

We are now ready for the main result of this section. It can be thought of as the analogue of the statement that the straight line is the shortest curve between two points in the plane, when replacing the Euclidean distance by the Markov distance.
This result is the key to the proof of the conjectures. 
Recall that a generalized arc is allowed to cross itself finitely many times \cite{MW}.
 \begin{thm}\label{thm 1}
 Let $A$ and $B$ be two lattice points in the plane and $\zgl$ the left deformation of the line segment $\ell_{AB}$.  Let $\zg$ be a generalized arc from $A$ to $B$. Then
 \[ |AB|\,\le \, |\zg|.\]
 and equality holds if and only if $\zg$ is homotopic to $\zgl$ or $\zgr$. (In the special case when $A=B$, the equality holds if and only if $\zg$ is homotopic to a contractible curve starting and ending in $A$.)
\end{thm}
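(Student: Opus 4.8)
The plan is to prove the inequality by induction on the number $n(\zg)$ of crossing points of $\zg$ with the triangulation $T$, which is exactly the number of tiles of the snake graph $\calg(\zg)$. The engine throughout is the skein relation of Theorem~\ref{skein relations}, specialized at $x_1=x_2=x_3=1$, which turns every smoothing into an identity $|C|=|C_+|+|C_-|$ among \emph{positive} integers. Since every length is at least one, each smoothing bounds $|C|$ strictly below by either summand, and this is what will produce both the inequality and its strictness.

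First I would record the base case and the "if" direction of the equality statement. Among all generalized arcs in $\rz$ from $A$ to $B$, the quantity $n(\zg)$ attains its minimum $n_0$ precisely on $\zgl$ and $\zgr$: any arc must cross the horizontal, vertical and diagonal families of $T$ at least $|\Delta y|$, $|\Delta x|$ and $|\Delta x+\Delta y|$ times respectively, the straight segment realizes all three minima simultaneously, and passing consistently to the left (resp.\ right) of every interior lattice point $P_1,\dots,P_t$ is the only way to keep the total count minimal (switching sides between two consecutive $P_i$ forces extra crossings). For these two arcs one has $|\zg|=|AB|$ by the definition of the Markov distance together with Lemma~\ref{lem 21}. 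This settles the base of the induction; it then remains to show $|\zg|>|AB|$ whenever $\zg$ is homotopic to neither $\zgl$ nor $\zgr$, i.e.\ whenever $n(\zg)>n_0$.

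For the inductive step I would separate two cases. If $\zg$ has a self-crossing, smooth it: Theorem~\ref{skein relations} gives $|\zg|=|\zg'|+|\zg''\sqcup\zeta|=|\zg'|+|\zg''|\,|\zeta|$, where $\zg',\zg''$ are generalized arcs from $A$ to $B$ with strictly fewer crossings with $T$ and $\zeta$ is a closed loop enclosing at least one lattice point (the self-crossing is essential by minimality). Any arc from $A$ to $B$ automatically has $n\ge n_0$, so the induction hypothesis gives $|\zg'|,|\zg''|\ge|AB|$, while $|\zeta|\ge1$ and the positivity of the first summand force $|\zg|>|AB|$. If instead $\zg$ is embedded but $n(\zg)>n_0$, then $\zg$ makes an essential detour around some lattice point $P$; I would perform a single smoothing that trades this detour for a shorter route past $P$ together with a small closed loop $\zeta_P$ encircling $P$, giving $|\zg|=|\zg_{\mathrm{short}}|+|\zg'_{\mathrm{short}}\sqcup\zeta_P|$ with $n(\zg_{\mathrm{short}}),n(\zg'_{\mathrm{short}})<n(\zg)$. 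Since $\zeta_P$ is contractible to the lattice point $P$, the bracelet lemma gives $|\zeta_P|=2$, so the second summand is at least $2|AB|$ and again $|\zg|>|AB|$. The case $A=B$ is handled in the same spirit: $\zg$ is then a nontrivial loop based at $A$, it contains a bracelet around some lattice point, and the bracelet evaluation $|\zeta|=2$ yields $|\zg|\ge2>0=|AA|$ with equality excluded, matching the stated characterization.

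The main obstacle is the embedded non-minimal case, and specifically making the detour smoothing rigorous: one must (i) extract, from the mere inequality $n(\zg)>n_0$, an honest local configuration around a lattice point that is available for smoothing, and (ii) guarantee that both resolutions are again arcs from $A$ to $B$ (possibly with a contractible loop) whose crossing number with $T$ has strictly dropped, so that the induction genuinely closes. The natural tool is the bigon criterion in minimal position: the failure of $\zg$ to be homotopic to $\zgl$ or $\zgr$ must manifest as a bigon between $\zg$ and an edge of $T$ that traps a lattice point, and it is precisely this trapped point that both supplies the smoothing and, through its bracelet of length $2$, accounts for the strictness of the inequality—hence for the fact that equality holds for $\zgl$ and $\zgr$ only.
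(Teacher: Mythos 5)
There is a genuine gap, and it sits exactly where you yourself flag the "main obstacle": the embedded non-minimal case. Theorem \ref{skein relations} only applies \emph{at a crossing point} of a multicurve, and an embedded arc $\zg$ has no crossing, so there is no smoothing available and the identity $|\zg|=|\zg_{\mathrm{short}}|+|\zg'_{\mathrm{short}}|\,|\zeta_P|$ you posit is not an instance of the skein relation. The bigon remedy you sketch does not repair this: a bigon between $\zg$ and an edge $e$ of $T$ that traps a lattice point cannot be removed by isotopy (isotopies may not cross lattice points), and smoothing the intersection points of the multicurve $\{\zg,e\}$ yields Ptolemy-type identities of the form $|\zg|\,|e|=|\za|\,|\zb|+|\zg'|\,|\zd|$, whose right-hand terms are arcs with new endpoints at the endpoints of $e$ --- not your two shortcuts plus a small loop of length $2$. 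The paper resolves precisely this difficulty by a different mechanism: it takes $\zg$ with $|\zg|$ minimal, proves it is simple by smoothing self-crossings (this step matches your first case), then pulls $\zg$ taut against the lattice to get a peg sequence $Q_0,\dots,Q_m$ with angles $\theta_i$, and whenever some $\theta_{s+1}\ne\pi$ it introduces an \emph{auxiliary} deformed segment $\zg_{AD}^L$ (or $\zg_{AE}^L$) that genuinely crosses $\zg$ near $D=Q_{s+1}$. Smoothing that crossing gives $|\zg_{AD}^L|\,|\zg|=|\zg_1|\,|\zg_{AD}^R|+|\ze|\,|\zb|$, and the left--right symmetry $|\zg_{AD}^L|=|\zg_{AD}^R|$ of Lemma \ref{lem 21} is what allows one to cancel the auxiliary factor and conclude $|\zg|>|\zg_1|$, contradicting minimality. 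That symmetry lemma is the indispensable device your outline lacks; without an analogue of it, your embedded case does not close.

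Two secondary problems. First, your base-case counting is incorrect: the straight segment from $(0,0)$ to $(q,p)$ meets only $q-1$ vertical, $p-1$ horizontal and $p+q-1$ diagonal edges, not $q$, $p$ and $p+q$; and, more importantly, the claim that the minimal crossing number with $T$ is attained \emph{precisely} on $\zgl$ and $\zgr$ is a nontrivial taut-position uniqueness statement that in your scheme constitutes the entire base case --- ``switching sides forces extra crossings'' is an assertion, not a proof. The paper sidesteps this question entirely by minimizing $|\zg|$ rather than $n(\zg)$, so it never needs to characterize minimal-crossing representatives combinatorially. Second, in your self-crossing step you should verify that $n$ strictly drops for both resolutions before invoking the induction hypothesis; this is repairable (the split-off loop is non-contractible in $\mathbb{R}^2\setminus\mathbb{Z}^2$, hence carries crossings with $T$), but the embedded case is not repairable within your framework as stated.
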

\begin{proof}
 Let $\zg$ be  a generalized arc from $A$ to $B$  and such that $|\zg|$ is minimal among all arcs from $A$ to $B$. We want to show that $\zg=\zg_{AB}^L$ or $\zg=\zg_{AB}^R$. 

We start by showing that $\zg$ has no self-crossing. Suppose to  the contrary that $\zg$ does have a self-crossing at a point $x$. Then we can write $\zg$ as a concatenation of subcurves $\zg=\zg_1\zg_2\zg_3$, where $\zg_1$ runs from $A$ to the crossing point $x$, $\zg_2$ is a non-contractible closed curve starting and ending at $x$ and $\zg_3$ runs from $x $ to $B$. The skein relation (Theorem \ref{skein relations}) applied to the smoothing of the crossing at $x$ implies
\[|\zg| =|\zg_1\zg_3| \,| \zg_2| + |\ze|  \] where $\ze$ is the generalized arc obtained in the smoothing. Note that, since $\zg$ has no kink,  $\ze$ also does not contain a kink and thus $|\ze|>0$. Moreover $|\zg_2|>0$, since $\zg_2$ is non-contractible. Thus \[|\zg| >|\zg_1\zg_3|\] with $\zg_1\zg_3 $ a generalized arc from $A$ to $B$. This contradicts the minimality of $|\zg|$. 

Thus $\zg$ is a simple curve in the plane from $A$ to $B$ that does not meet any other lattice points besides $A$ and $B$. We may think of the plane as a board with a peg in each lattice point and $\zg$ as a string lying on this board. If we now pull this string taut it will touch a sequence of pegs in the board. This image shows that there exists a unique sequence of lattice points $A=Q_0,Q_1,\ldots,Q_m=B$ each equipped with a circle $C_i$ of infinitesimally small radius and centered at $Q_i$, $i=1,2,\ldots, m-1$ such that $\zg$ is homotopic to the path
\[l_1a_1l_2a_2\cdots l_{m-1}a_{m-1}l_m\] 
defined as follows, see Figure \ref{fig piecewise path}.
\begin{figure}
\begin{center}
\scalebox{1.2}{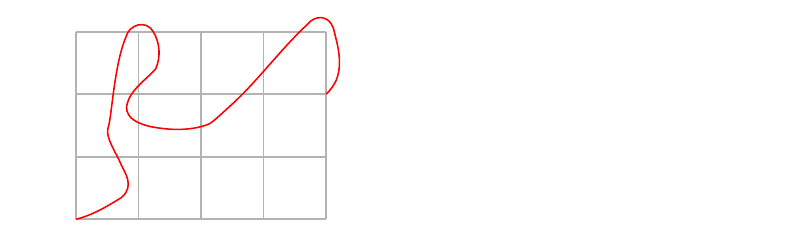}
\caption{Proof of Theorem \ref{thm 1}. The construction of the path and the angles. 
The left picture shows an arc $\zg$ in red and the associated path $l_1a_1\ldots,l_m$ in black. The right picture shows the angles $\theta_i$. Here $\theta_1,\theta_4\in [-2\pi,-\pi]$ and  $\theta_2,\theta_3\in [\pi,2\pi]$}
\label{fig piecewise path}
\end{center}
\end{figure}
\begin{itemize}
\item $l_1 $ is the straight line segment that starts at $Q_0=A$ and ends at a point $R_1$ on the circle $C_1$,
and
$a_1$ is an arc along the circle $C_1$ from $R_1$ to a point $S_1$;
\item 
for $i=2,\ldots m-1$, $l_i$  is the straight line segment that starts at $S_{i-1}$ on the circle $C_{i-1} $ and ends at a point $R_{i}$ on the circle $C_i$,
and  
$a_i$ is an arc along the circle $C_i$ from $R_i$ to a point $S_i$;
\item $l_m$ is  the straight line segment that starts at $S_{m-1}$ on the circle $C_{i-1} $ and ends at $Q_m=B$. 
\end{itemize}

We define a sequence of angles $\theta_1,\theta_2,\ldots,\theta_{m-1}$ by setting $\theta_i$ equal to the angle between the line segments $Q_{i-1}Q_i$ and $Q_iQ_{i+1}$ following the curve $\zg$.  It follows that $\theta_i\in [-2\pi,-\pi] \cup [\pi, 2\pi]$, indeed, if $-\pi<\theta<\pi$ then the point $Q_i$ would  not be a point in the above sequence.

With this notation, the arc $\zg$ is equal to the arc $\zg_{AB}^L$, or  $\zg_{AB}^R$ respectively, if and only if all angles $\theta_i$ are equal to $\pi$, or $-\pi $ respectively.
Assume $\zg$ is not equal to  $\zg_{AB}^L$ or  $\zg_{AB}^R$.  Without loss of generality, we may assume $\theta_1>0$.  Let $\theta _{s+1}$ be the first angle that is not equal to $\pi$. We consider three cases.

(1) Suppose $\theta_{s+1}\in[-2\pi,-\pi]$. Since $\theta_1=\theta_2=\ldots=\theta_s=\pi$, there is a pair $(p,q)$ of relatively prime integers such that each vector $\overrightarrow{Q_iQ_{i+1}}=\left(
\begin{smallmatrix}
 p\\q
\end{smallmatrix}
\right)
$, for $i=1,2, \ldots, s$. Let $D=Q_s+(p,q)$, see Figure \ref{fig 28case1}, and consider the arc $\zg_{AD}^L$. Note that $D=Q_{s+1}$, because the angle $\theta_s=\pi$. Applying the skein relations (Theorem \ref{skein relations}) to the product of $\zg$ and $\zg_{AD}^L$ by smoothing their crossing near $D$, we obtain
\[|\zg_{AD}^L|\,|\zg| = |\zg_1|\,|\zg_{AD}^R|+ |\ze|\,|\zb|,\]
where $\zg_1, \zb,\ze$ are the arcs obtained from the smoothing as shown in Figure \ref{fig 28case1}.
Lemma \ref{lem 21} implies $|\zg_{AD}^L|=|\zg_{AD}^R|$ and thus $|\zg| > |\zg_1|$, a contradiction to the minimality of $|\zg|$.
\begin{figure}
 \begin{center}
\scalebox{0.7}{ 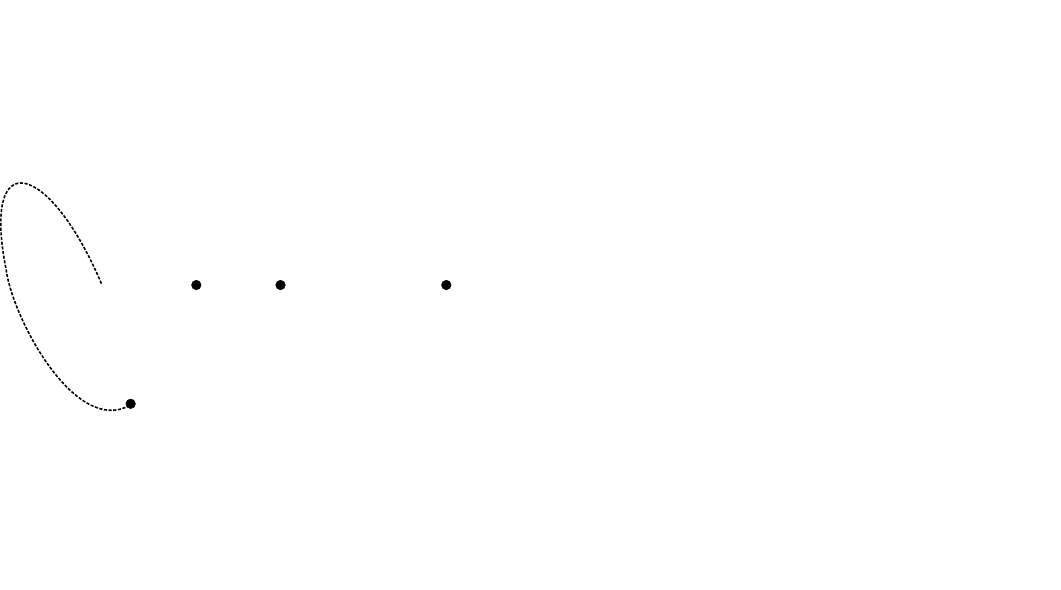}
 \caption{Proof of Theorem \ref{thm 1}. Step 3, Case 1.}\label{fig 28case1}
\end{center}
\end{figure}

(2) Suppose $\theta_{s+1}\in[\pi,2\pi)$.
Again there is a pair $(p,q) $ of relatively prime integers such that each vector $\overrightarrow{Q_iQ_{i+1}}=\left(
\begin{smallmatrix}
 p\\q
\end{smallmatrix}
\right)$, for $i=1,2, \ldots, s$, and we let $D=Q_{s+1}=Q_s+(p,q)$, see Figure \ref{fig 28case2}.
Let $E=D+(p,q)$. Note that $E$ is not one of the points $Q_i$. Applying the skein relation to the product of $\zg$ and $\zg_{AE}^L$ by smoothing their crossing near $D$, we obtain
\[|\zg_{AE}^L|\,|\zg| = |\zg_1|\,|\zg_{AE}^R|+ |\ze|\,|\zb|,\]
where $\zg_1, \zb,\ze$ are the arcs obtained from the smoothing as shown in Figure \ref{fig 28case2}.  Again Lemma~\ref{lem 21} allows us to conclude $|\zg|>|\zg_1|$, which is a contradiction to the minimality of $|\zg|$.
\begin{figure}
 \begin{center}
\scalebox{0.7}{ 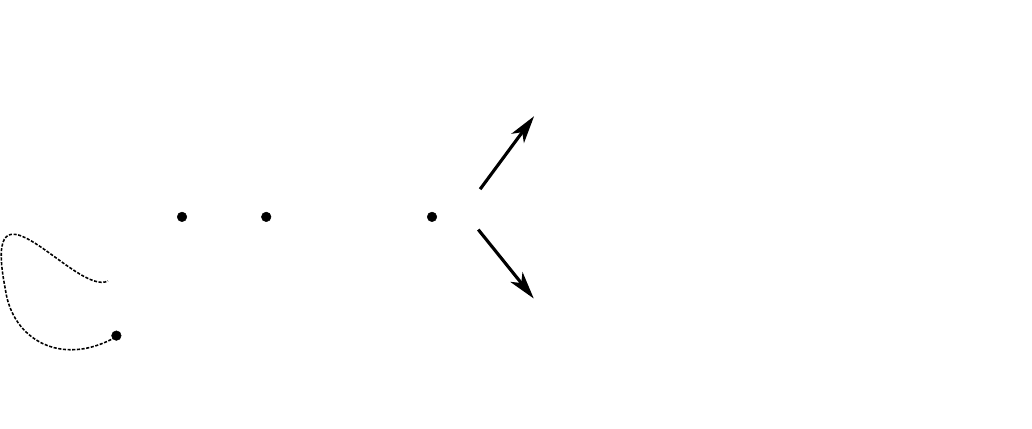}
 \caption{Proof of Theorem \ref{thm 1} Case 2.}\label{fig 28case2}
\end{center}
\end{figure}

(3) Suppose $\theta_{s+1}=2\pi$. In this case $Q_s=Q_{s+2}$. Let $D=Q_{s+1}$, see Figure \ref{fig 28case3-1}.
If all subsequent angles are equal to $\pi$ then $B$ lies on the line through $A$ and $D$ and  $\zg$ is obtained from the concatenation of $\zg_{AD}^R\zg_{DB}^R$ by an infinitesimal deformation avoiding the point $D$ on the right. Clearly $|\zg| > \zg_{AB}^L$ in this case, a contradiction.

Suppose therefore that at least one of the angles after $\theta_{s+1}$  is different from $\pi$. 
Let $t >0$ be the least integer such that 
 $\theta_{s+1+t}\ne \pi$.
 
 If $t<s$ then we must have $\theta_{s+1+t}\in[-2\pi,-\pi]$ since $\zg$ has no self-crossing. This case is illustrated in Figure \ref{fig 28case3-1}. 
Similarly to the case (1), applying the skein relation to the product of $\zg$ and $\zg_{AD}^L$ by smoothing their crossing near $D$, we obtain
\[|\zg_{AD}^L|\,|\zg| = |\zg_1|\,|\zg_{AD}^R|+ |\ze|\,|\zb|,\]
where $\zg_1,\ze,\zb$ are the arcs obtained in the smoothing, see Figure \ref{fig 28case3-1}. Again we conclude $|\zg|>|\zg_1|$, which is a contradiction to the minimality of $|\zg|$.

We do not illustrate the case $t>s$ here, but it also leads to a contradiction using an argument similar to the one in case (2) above.
Thus each of the cases (1)-(3)  leads to a contradiction, and therefore we must have that all angles $\theta_i$ are equal to $\pi$. Thus $\zg=\zg_{AB}^L$. \qedhere
\begin{figure}
 \begin{center}
\scalebox{0.7}{ 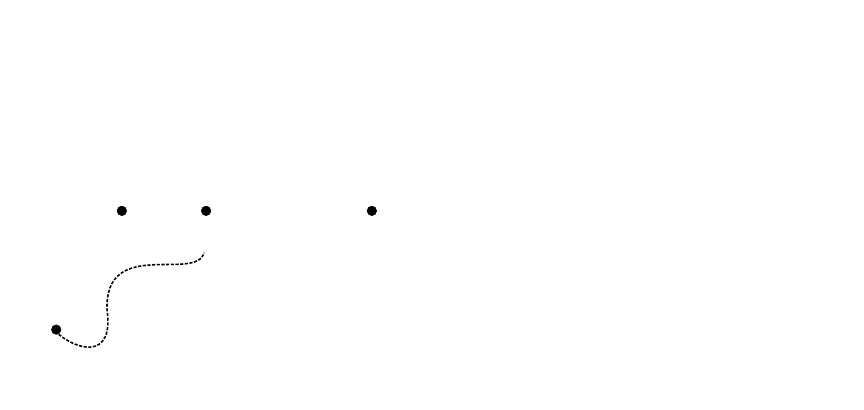}
 \caption{Proof of Theorem \ref{thm 1}. Case 3. with $t<s$.}\label{fig 28case3-1}
\end{center}
\end{figure}

\end{proof}
As an immediate consequence, we obtain the following Ptolemy inequality.
\begin{corollary}[Ptolemy inequality]\label{Ptolemy inequality}
 Given four points $A,B,C,D$ in the plane such that the straight line segments $\ell_{AB},\ell_{BC},\ell_{CD},\ell_{DA}$ form a convex quadrilateral with diagonals $\ell_{AC}$ and $\ell_{BD}$, we have
 \[|AC|\,|BD|\  \ge\  |AB|\,|CD| \, + \, |AD| \,|BC|.\]
\end{corollary}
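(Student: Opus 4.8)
The plan is to obtain the inequality from a single application of the skein relation to the two diagonals, combined with the minimality principle of Theorem~\ref{thm 1}. Since $ABCD$ is a convex quadrilateral, the straight diagonals $\ell_{AC}$ and $\ell_{BD}$ meet in exactly one interior point $X$. First I would replace each diagonal by a deformation making it into a generalized arc whose length realizes the Markov distance: choose arcs $\zg_{AC}$ and $\zg_{BD}$ that are (left or right) deformations of $\ell_{AC}$ and $\ell_{BD}$, so that $|\zg_{AC}|=|AC|$ and $|\zg_{BD}|=|BD|$ by Definition~\ref{def distance} and Lemma~\ref{lem 21}. The deformations can be arranged so that $\zg_{AC}$ and $\zg_{BD}$ still cross transversally in a single point near $X$, which is the crossing I will smooth.

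Next I would apply Theorem~\ref{skein relations} to the multicurve $C=\zg_{AC}\cup\zg_{BD}$ at that crossing. The two smoothings $C_+$ and $C_-$ each reconnect the four strands emanating toward $A,B,C,D$ by joining cyclically adjacent pairs: one resolution produces a disjoint pair consisting of a generalized arc from $A$ to $B$ and a generalized arc from $C$ to $D$, and the other produces a generalized arc from $A$ to $D$ together with one from $B$ to $C$. The skein relation, together with the induced integer identity on lengths and the multiplicativity of the length function on disjoint multicurves (the same multiplicativity already used in Corollary~\ref{lem 11}), then gives
\[
|AC|\,|BD|\;=\;|\zg_{AC}|\,|\zg_{BD}|\;=\;|C_+|+|C_-|,
\]
where $|C_+|=|\zg_{AB}'|\,|\zg_{CD}'|$ and $|C_-|=|\zg_{AD}'|\,|\zg_{BC}'|$ for the generalized arcs $\zg'_{\bullet}$ arising from the smoothing.

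Finally I would invoke Theorem~\ref{thm 1}: each arc $\zg'_{XY}$ obtained from the smoothing is a generalized arc from $X$ to $Y$, hence $|\zg'_{XY}|\ge|XY|$. Substituting $|\zg_{AB}'|\ge|AB|$, $|\zg_{CD}'|\ge|CD|$, $|\zg_{AD}'|\ge|AD|$, $|\zg_{BC}'|\ge|BC|$ into the displayed equation yields
\[
|AC|\,|BD|\;\ge\;|AB|\,|CD|+|AD|\,|BC|,
\]
which is the desired Ptolemy inequality. The main obstacle I expect is the topological bookkeeping in the middle step: one must verify that smoothing the single diagonal crossing indeed reconnects the strands into arcs joining the \emph{opposite} vertices of the quadrilateral as claimed, that these reconnected curves are honest kink-free generalized arcs (so that Theorem~\ref{thm 1} applies to them), and that the infinitesimal deformations of $\ell_{AC}$ and $\ell_{BD}$ can be chosen to intersect in a single transverse crossing near $X$ without creating spurious self-crossings. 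Once the smoothing is correctly identified, the remainder is a direct consequence of multiplicativity and minimality.
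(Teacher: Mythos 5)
Your proposal is correct and is essentially the paper's own argument: the paper likewise deforms the diagonals to $\zg_{AC}^L$ and $\zg_{BD}^L$, forms the curved quadrilateral whose sides $\za_{AB},\za_{BC},\za_{CD},\za_{DA}$ are exactly the arcs produced by smoothing the single crossing, obtains the equality $|AC|\,|BD|=|\za_{AB}|\,|\za_{CD}|+|\za_{BC}|\,|\za_{DA}|$, and then applies Theorem~\ref{thm 1} to each side. The only cosmetic difference is that you invoke Theorem~\ref{skein relations} directly where the paper cites its packaged consequence, Corollary~\ref{lem 11}.
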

\begin{proof}
 Consider the arcs $\zg_{AC}^L$ and $\zg_{BD}^L$ given as the left deformations of the diagonals. Let $\za_{AB}$ be the arc from $A$ to $B$ defined by following $\zg_{AC}^L$ up to its crossing point with $\zg_{BD}^L$ and then following $\zg_{BD}^L$ up to $B$. Similarly, we can define arcs $\za_{BC},\za_{C\!D}$ and $ \za_{D\!A}$ to obtain a curved quadrilateral with corner vertices $A,B,C,D$ that does not contain any other lattice points. Then Corollary \ref{lem 11} implies
 \[  |\zg_{AC}^L |\, |\zg_{B\!D}^L| \, =\,|\za_{A\!B}|\,|\za_{C\!D}|\, + \,|\za_{BC}|\,|\za_{D\!A}|\,.\]
 The left hand side of this equation is equal to $|AC|\,|BD|$, by definition of the Markov distance, and thus the result follows from Theorem \ref{thm 1}. 
\end{proof}

\begin{remark}
 The Markov distance is a semi-metric but not a metric. It does not satisfy the triangle inequality because the deformations may create too many crossing points. For example, take the three colinear points $O=(0,0), A=(1,0), B=(2,0)$. Then $|OA|=|AB|=1$ but $|OB|=3$, because the deformation $\zg^L_{OB}$ crosses two arcs of the triangulation. For a non-colinear example, take $ D=(3,0), C=(1,1)$. Then each of the line segments $\ell_{OC}, \ell_{CD}$ crosses exactly one arc of the triangulation, hence $|OC|= |CD|=2$. However $|OD|=8$. 
 
However, there is a variation of the triangle inequality which we shall prove in Section \ref{sect 5}.
\end{remark}

%
%
\section{Proof of Aigner's conjectures}\label{sect 3}
We shall now prove Conjecture \ref{Aigner conjectures} from Aigner's book in the following theorem. We actually prove more general statements in Theorem \ref{thm 1new} about the numbers $m_{q,p} $ introduced in Definition \ref{def distance} for arbitrary lattice point $0\le p\le q$.
Recall that $m_{q,p}=m_{\frac{p}{q}}$ is a Markov number if  $0<p<q$ and $p,q$ are relatively prime. 

Note that the theorem is proved independently by C. Lagisquet, E. Pelantov\'a, S. Tavenas, L. Vuillon in \cite{LPTV} using a different method.
\begin{thm} \label{thm 1new}
 For all integers $0\le p\le q$\,, we have the following inequalities.
\begin{align}
m_{q,p}&<m_{q,p+1},\label{constant denominator} \\
 m_{q,p}&<m_{q+1,p}, \label{constant numerator}\\
m_{q,p}&<m_{q+1,p-1},\quad (\textup{if }p\ge 1).\label{constant sum}
\end{align}
\end{thm}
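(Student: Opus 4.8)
The plan is to deduce all three inequalities from the Ptolemy inequality (Corollary~\ref{Ptolemy inequality}) by placing, in each case, a small convex lattice quadrilateral whose longer diagonal is the larger of the two Markov distances and one of whose sides is the smaller one. I will use freely that the triangulation $T$ is $\mathbb{Z}^2$-periodic, so the Markov distance is translation invariant and $|AB|=m_{s,r}$ whenever $B-A=(s,r)$; that the three edge directions $(1,0),(0,1),(1,-1)$ give $m_{1,0}=m_{0,1}=m_{1,-1}=1$; and the symmetries of $T$. Concretely, the $180^\circ$ rotation $(x,y)\mapsto(-x,-y)$ and the reflection $(x,y)\mapsto(-y,-x)$ in the anti-diagonal both map $T$ to itself (the first fixes all three edge labels, the second fixes label $3$ and interchanges $1$ and $2$), so each sends the snake graph of $\ell_{OA}$ to a snake graph with the same number of perfect matchings. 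Applying them to $A=(q,p)$ gives $m_{q,p}=m_{-p,-q}=m_{p,q}$ and $m_{s,r}=m_{-s,-r}$, which I will use repeatedly.

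For \eqref{constant denominator} I would apply Corollary~\ref{Ptolemy inequality} to the convex quadrilateral $O,(q-1,p+1),(q,p+1),(q,p)$. Its diagonals are $O(q,p+1)$ and $(q-1,p+1)(q,p)$, the latter of direction $(1,-1)$ and hence of length $1$; the sides $(q-1,p+1)(q,p+1)$ and $(q,p+1)(q,p)$ have directions $(1,0)$ and $(0,-1)$ and hence length $1$, while the remaining sides are $O(q-1,p+1)$ and $O(q,p)$. The inequality then reads
\[ m_{q,p+1}\cdot 1\ \ge\ m_{q-1,p+1}\cdot 1+m_{q,p}\cdot 1, \]
so $m_{q,p+1}\ge m_{q-1,p+1}+m_{q,p}>m_{q,p}$, because $m_{q-1,p+1}\ge 1$; one checks this quadrilateral is convex and nondegenerate for all $q\ge 1$, $p\ge 0$. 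Inequality \eqref{constant numerator} is obtained in the same way from $O,(q+1,p-1),(q+1,p),(q,p)$, which gives $m_{q+1,p}\ge m_{q+1,p-1}+m_{q,p}>m_{q,p}$ for $p\ge 1$ (equivalently, it follows from \eqref{constant denominator} and $m_{q,p}=m_{p,q}$); the boundary row $p=0$ reduces to the elementary statement that $m_{q,0}=2q-1$ is strictly increasing in $q$, since the left deformation of a horizontal segment is a straight snake graph.

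The move in \eqref{constant sum} sends $(q,p)$ to the lattice-adjacent point $(q+1,p-1)$, so no unit quadrilateral of the above type is available; here I would work along the anti-diagonal $x+y=n$, $n=p+q$, which is preserved by this move. Put $g(x)=m_{x,n-x}$. Applying Corollary~\ref{Ptolemy inequality} to the parallelogram $O,(x,n-x),(x+1,n-x-1),(1,-1)$, whose two $(1,-1)$-sides have length $1$, yields the strict log-convexity
\[ g(x-1)\,g(x+1)\ \ge\ g(x)^2+1\ >\ g(x)^2, \]
so the ratios $R(x)=g(x+1)/g(x)$ are strictly increasing in $x$. The symmetry $m_{q,p}=m_{p,q}$ gives $g(x)=g(n-x)$, hence $R(x)\,R(n-1-x)=1$; taking $x=q$ and using $n-1-q=p-1$ we get $R(q)\,R(p-1)=1$. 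Since $q\ge p>p-1$, strict monotonicity forces $R(q)>R(p-1)=1/R(q)$, i.e.\ $R(q)>1$, which is exactly $m_{q+1,p-1}>m_{q,p}$.

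The routine part throughout is checking convexity and nondegeneracy of the chosen quadrilaterals and reading off their four Markov distances by translation invariance. The step I expect to be the crux is \eqref{constant sum}: because its endpoints are lattice-adjacent there is no one-step Ptolemy comparison, and one must combine the strict log-convexity with the anti-diagonal symmetry. It is essential here that the Ptolemy inequality is strict, i.e.\ that it carries the $+1$ term, since plain log-convexity would only yield the non-strict $m_{q,p}\le m_{q+1,p-1}$.
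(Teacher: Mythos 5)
Your proof is correct, but for the constant-sum inequality \eqref{constant sum} it takes a genuinely different route from the paper. For \eqref{constant numerator} and \eqref{constant denominator} you are doing essentially what the paper does: the paper applies the Ptolemy inequality once to the quadrilateral $(0,0),(q,p),(q+1,p),(q+1,p-1)$, whose three short sides are edges of $T$, obtaining the strong form $m_{q+1,p}\ge m_{q,p}+m_{q+1,p-1}$ (this is its Theorem \ref{thm 2'}) and reading off both inequalities from one picture; your two unit quadrilaterals are this one and its mirror. For \eqref{constant sum}, however, the paper uses a single additional Ptolemy application to the trapezoid $A=(0,0)$, $B=(q,p)$, $D=(q+1,p-1)$, $E=(q-p+1,0)$, where the reflection symmetry of $T$ across the main diagonal gives $|BE|=|DE|$ (since $B-E=(p-1,p)$ and $D-E=(p,p-1)$), so that $|AD|\,|BE|>|AB|\,|DE|$ immediately yields $m_{q+1,p-1}>m_{q,p}$. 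You instead prove strict log-convexity of $g(x)=m_{x,n-x}$ along the whole anti-diagonal via the parallelogram with side $(1,-1)$, and combine it with the palindromic symmetry $g(x)=g(n-x)$ through the ratio identity $R(x)R(n-1-x)=1$. Both arguments rest on the same two pillars (Ptolemy plus the diagonal reflection $m_{r,s}=m_{s,r}$), but the paper's is a one-shot comparison while yours is a global convexity statement; yours buys the stronger by-product $m_{x-1,n-x+1}\,m_{x+1,n-x-1}\ge m_{x,n-x}^2+1$, i.e.\ that the Markov distances are strictly log-convex along every anti-diagonal, at the cost of a slightly longer derivation. Your range checks are sound: the parallelogram is convex for every integer $x$ once $n\ge 1$, so $R$ is strictly increasing on all of $\mathbb{Z}$ and the case $q\ge p\ge 1$ goes through.

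One slip, harmless to the logic but worth correcting: your boundary formula $m_{q,0}=2q-1$ is false. The left deformation of the horizontal segment does give a straight snake graph, as you say, with $2(q-1)$ tiles (crossing labels $3,2$ at each of the $q-1$ interior points), but a straight snake graph with $n$ tiles has $\calf_{n+2}$ perfect matchings, so $m_{q,0}=\calf_{2q}=1,3,8,21,\dots$; the value $2q-1$ would be the count for a zigzag snake graph. The paper's own remark in Section \ref{sect 2} records $|OD|=8$ for $D=(3,0)$, contradicting $2\cdot 3-1=5$. Since all you use is that $m_{q,0}$ is strictly increasing in $q$, and $\calf_{2q}$ certainly is, your handling of the row $p=0$ survives the correction — and note that this boundary case genuinely needs separate treatment, as the unit quadrilateral degenerates there (three collinear vertices), a point the paper's own proof passes over silently.
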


\begin{proof}
 Let $A=(0,0), B=(q,p)$, $C=(q+1,p)$ and $ D=(q+1,p-1)$, see Figure \ref{fig conjectures}.
\begin{figure}
\begin{center}
\scalebox{0.8}{
\begingroup%
  \makeatletter%
  \providecommand\color[2][]{%
    \errmessage{(Inkscape) Color is used for the text in Inkscape, but the package 'color.sty' is not loaded}%
    \renewcommand\color[2][]{}%
  }%
  \providecommand\transparent[1]{%
    \errmessage{(Inkscape) Transparency is used (non-zero) for the text in Inkscape, but the package 'transparent.sty' is not loaded}%
    \renewcommand\transparent[1]{}%
  }%
  \providecommand\rotatebox[2]{#2}%
  \newcommand*\fsize{\dimexpr\f@size pt\relax}%
  \newcommand*\lineheight[1]{\fontsize{\fsize}{#1\fsize}\selectfont}%
  \ifx\svgwidth\undefined%
    \setlength{\unitlength}{314.97892904bp}%
    \ifx\svgscale\undefined%
      \relax%
    \else%
      \setlength{\unitlength}{\unitlength * \real{\svgscale}}%
    \fi%
  \else%
    \setlength{\unitlength}{\svgwidth}%
  \fi%
  \global\let\svgwidth\undefined%
  \global\let\svgscale\undefined%
  \makeatother%
  \begin{picture}(1,0.74331401)%
    \lineheight{1}%
    \setlength\tabcolsep{0pt}%
    \put(0,0){\includegraphics[width=\unitlength,page=1]{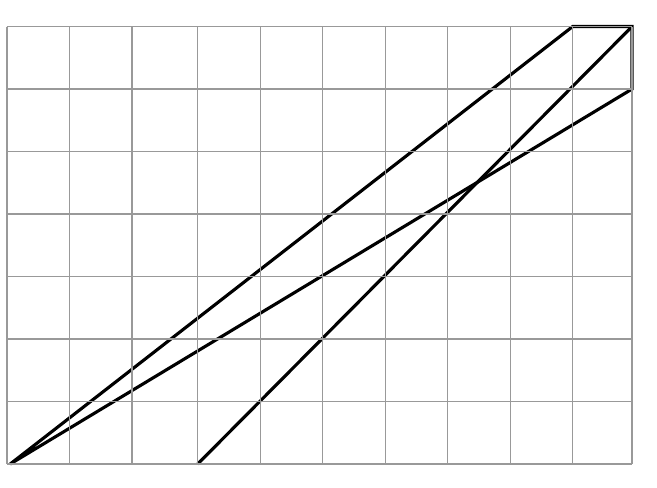}}%
    \put(-0.00040305,0){\makebox(0,0)[lt]{\lineheight{1.25}\smash{\begin{tabular}[t]{l}$A(0,0)$\end{tabular}}}}%
    \put(0.85643658,0.72){\makebox(0,0)[lt]{\lineheight{1.25}\smash{\begin{tabular}[t]{l}$B(q,p)$\end{tabular}}}}%
    \put(0.97358619,0.72){\makebox(0,0)[lt]{\lineheight{1.25}\smash{\begin{tabular}[t]{l}$C(q+1,p)$\end{tabular}}}}%
    \put(0.97739809,0.59614073){\makebox(0,0)[lt]{\lineheight{1.25}\smash{\begin{tabular}[t]{l}$D(q+1,p-1)$\end{tabular}}}}%
    \put(0.2936367,0){\makebox(0,0)[lt]{\lineheight{1.25}\smash{\begin{tabular}[t]{l}$E(q-p+1,0)$\end{tabular}}}}%
  \end{picture}%
\endgroup%
}
\caption{Proof of Theorem \ref{thm 1new}}
\label{fig conjectures}
\end{center}
\end{figure}
 The Ptolemy inequality (Corollary \ref{Ptolemy inequality}) implies
 \[|AC|\,|BD|\  \ge\  |AB|\,|CD| \, + \, |AD| \,|BC|.\]
But $|BD|=|CD|=|BC|=1$, since the corresponding line segments are  arcs of the triangulation $T$. So 
\begin{equation}
\label{strong eq} 
 |AC|\ge|AB|+|AD|.
\end{equation}
In particular, we have two strict inequalities $
 |AC|> |AB|$  
 and $ |AC|> |AD|$, and the former proves (\ref{constant numerator})  while the latter proves  (\ref{constant denominator}).

In order to show (\ref{constant sum}), let $E=( q-p+1, 0)$, see Figure \ref{fig conjectures}. Then the coordinates of $B-E$ and $D-E$ are $(p-1,p)$ and $(p,p-1)$, respectively, and therefore
we have $|BE|=|DE|$. 
 
 Consider the quadrilateral defined by the points $A,B,D,E$. The Ptolemy inequality  implies
 
\begin{equation} \label{eq 35}
 |AD| \, |BE| \ge |AB|\,|DE| + |BD|\,|AE|,
\end{equation}
 hence
 \[|AD| \, |BE| > |AB|\,|DE|,  \]
 and thus the inequality (\ref{constant sum}) follows since $|BE|=|DE|.$
\end{proof}

\begin{corollary}
 \label{cor conjectures}
 The conjectures \ref{Aigner conjectures} hold.
 \qed\end{corollary}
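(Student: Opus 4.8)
The plan is to deduce the corollary directly from Theorem~\ref{thm 1new} by iterating its three unit-step inequalities, so essentially no new geometry is needed. The first task is a translation: under the identification $m_{\frac{p}{q}}=m_{q,p}$, which reads the Markov number indexed by the fraction $\frac{p}{q}$ as the Markov distance from the origin to the lattice point $(q,p)$, each of Aigner's conjectures becomes an assertion about the numbers $m_{q,p}$. Concretely, conjecture (a) asks for $m_{q,p}<m_{q+i,p}$, conjecture (b) for $m_{q,p}<m_{q,p+i}$, and conjecture (c) for $m_{q,p}<m_{q+i,p-i}$, in each case for every $i>0$ for which the displayed fraction is reduced.

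Next I would treat each conjecture by induction on $i$, using the corresponding single-step inequality of Theorem~\ref{thm 1new} as the inductive step. For the constant numerator conjecture (a), inequality \eqref{constant numerator} produces the telescoping chain
\[ m_{q,p}<m_{q+1,p}<m_{q+2,p}<\cdots<m_{q+i,p}, \]
and analogously inequality \eqref{constant denominator} gives $m_{q,p}<m_{q,p+1}<\cdots<m_{q,p+i}$ for conjecture (b), while inequality \eqref{constant sum} gives $m_{q,p}<m_{q+1,p-1}<\cdots<m_{q+i,p-i}$ for conjecture (c). In each case the two endpoints of the chain are precisely the Markov numbers whose comparison the conjecture asserts, so a chain of strict inequalities delivers exactly the claimed strict inequality.

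The only point needing care --- and the one place I expect mild friction rather than a real obstacle --- is that Theorem~\ref{thm 1new} is stated only for integer pairs with $0\le p\le q$, and that the constant-sum inequality \eqref{constant sum} carries the extra hypothesis $p\ge 1$; so I must check that every intermediate pair along each chain meets these constraints. For (a) and (b) this is automatic, since the hypothesis that $\frac{p}{q}$ and the target fraction are reduced proper fractions keeps $0\le p+j\le q$ (resp.\ $0\le p\le q+j$) throughout. For (c) the intermediate pairs are $(q+j,p-j)$ with $0\le j\le i$; here $p-j$ decreases and $q+j$ increases, so $p-j\le q+j$ is preserved, and the hypothesis $p-j\ge 1$ needed to invoke \eqref{constant sum} at step $j\le i-1$ holds because the reduced fraction $\frac{p-i}{q+i}$ has positive numerator, forcing $p-j\ge p-(i-1)\ge 2$. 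Finally I would emphasize the conceptual point that makes the telescoping legitimate: the intermediate numbers $m_{q+j,p-j}$ need \emph{not} be genuine Markov numbers, since $(q+j,p-j)$ may fail to be coprime, and the whole argument succeeds only because Theorem~\ref{thm 1new} was established for all lattice points rather than merely the coprime ones.
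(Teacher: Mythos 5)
Your proposal is correct and is exactly the argument the paper intends: the corollary is stated with only a \qed because it follows by telescoping the three unit-step inequalities of Theorem~\ref{thm 1new}, which is precisely what you carry out. Your explicit verification of the boundary constraints (in particular $p-j\ge 1$ along the constant-sum chain) and your remark that the intermediate pairs $(q+j,p-j)$ may be non-coprime --- the very reason the theorem is proved for all $m_{q,p}$ rather than only Markov numbers --- faithfully spell out what the paper leaves implicit.
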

From our proof, we also obtain the following stronger result.
\begin{thm}\label{thm 2'}
 For all integers $0\le p\le q$\,, we have 
 \[ m_{q+1,p}\ge m_{q,p}+m_{q+1,p-1}.\]
 \end{thm}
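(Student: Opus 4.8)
The plan is to recognize that this inequality is exactly the intermediate estimate (\ref{strong eq}) produced inside the proof of Theorem \ref{thm 1new}, merely rewritten in the $m_{q,p}$ notation; so no genuinely new argument is needed, only the correct dictionary between Markov distances and the numbers $m_{q,p}$.

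Concretely, I would reuse the four lattice points $A=(0,0)$, $B=(q,p)$, $C=(q+1,p)$, $D=(q+1,p-1)$ of Figure \ref{fig conjectures}. By Definition \ref{def distance} one has $|AC|=m_{q+1,p}$, $|AB|=m_{q,p}$ and $|AD|=m_{q+1,p-1}$, so the asserted inequality $m_{q+1,p}\ge m_{q,p}+m_{q+1,p-1}$ is literally the statement $|AC|\ge|AB|+|AD|$. To obtain the latter I apply the Ptolemy inequality (Corollary \ref{Ptolemy inequality}) to the quadrilateral $ABCD$, whose diagonals are $\ell_{AC}$ and $\ell_{BD}$. The sides $\ell_{BC}$, $\ell_{CD}$ and the diagonal $\ell_{BD}$ have direction vectors $(-1,0)$, $(0,-1)$ and $(-1,1)$, so each is a single edge of the triangulation $T$ and hence has Markov distance $1$. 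Feeding $|BC|=|CD|=|BD|=1$ into
\[ |AC|\,|BD| \ \ge\ |AB|\,|CD| + |AD|\,|BC| \]
collapses it to $|AC|\ge|AB|+|AD|$, which is precisely the claim.

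The only point that requires care --- and which I expect to be the (very mild) crux --- is verifying the convexity hypothesis of Corollary \ref{Ptolemy inequality}. I would argue, as in the proof of Theorem \ref{thm 1new}, that for $p\ge 1$ the diagonal $\ell_{AC}$ has slope $p/(q+1)$ and therefore passes strictly below $B=(q,p)$ and strictly above $D=(q+1,p-1)$; hence $\ell_{BD}$ meets $\ell_{AC}$, the points $A,B,C,D$ are in convex position in this cyclic order, and the quadrilateral encloses no further lattice point. The boundary case $p=0$ is degenerate, since then $A,B,C$ become collinear, but it is already covered by the derivation of (\ref{strong eq}) in the proof of Theorem \ref{thm 1new}, where one checks directly that the inequality still holds (indeed with equality). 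In every case, therefore, the stronger inequality follows with no additional computation.
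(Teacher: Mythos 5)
Your proposal is correct and takes essentially the same route as the paper: there, Theorem \ref{thm 2'} is dispatched in one line as a direct consequence of equation (\ref{strong eq}), which is derived exactly as you do, by applying the Ptolemy inequality (Corollary \ref{Ptolemy inequality}) to $A=(0,0)$, $B=(q,p)$, $C=(q+1,p)$, $D=(q+1,p-1)$ and using $|BC|=|CD|=|BD|=1$ since these segments are edges of the triangulation $T$. One small correction: your side remark that the quadrilateral ``encloses no further lattice point'' is false in general (for $(q,p)=(5,3)$ the quadrilateral contains interior lattice points such as $(2,1)$), but this is harmless, since Corollary \ref{Ptolemy inequality} --- unlike Corollary \ref{lem 11} --- assumes only convexity, its proof absorbing any enclosed lattice points via the deformed diagonals and Theorem \ref{thm 1}.
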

\begin{proof}
This follows directly from equation (\ref{strong eq}). 
\end{proof}
\begin{example}
 $433=m_{5,3}>m_{4,3}+m_{5,2}=169+194=363.$
\end{example}
If we use this result repeatedly
\[\begin{array}{rcl} m_{q+i,p+i} &\ge& m_{q+i-1,p+i}+m_{q+i,p+i-1} \\
&\ge&
m_{q+i-2,p+i}+2m_{q+i-1,p+i-1}+m_{q+i,p+i-2} \\
&\ge&
m_{q+i-3,p+i}+3m_{q+i-2,p+i-1}+3m_{q+i-1,p+i-2} +m_{q+i,p+i-3} \\&\vdots&\\ 
\end{array} \]
we have proved the following corollary. 
\begin{corollary}
 For all integers $0\le p\le q$ and $i>0$ we have
 \[ m_{q+i,p+i} \ge \sum_{j=0}^i {i\choose j} m_{q+j,p+i-j}.\]
In particular, Theorem \ref{thm 2} holds.
\end{corollary}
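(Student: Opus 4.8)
The plan is to reduce the whole statement to a single two-term inequality that I then iterate in a Pascal-triangle fashion. The key device is a \emph{corner reduction}
\[ m_{a,b}\ \ge\ m_{a-1,b}+m_{a,b-1}\qquad\text{for all integers } a,b\ge 1, \]
which is exactly inequality (\ref{strong eq}) of Theorem \ref{thm 2'}, but with the corner sitting at an \emph{arbitrary} lattice point instead of a point below the diagonal. Granting this, the corollary follows by induction on the number of times the reduction has been applied, and the binomial coefficients appear automatically from Pascal's rule.

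To establish the corner reduction I would repeat the proof of (\ref{strong eq}) verbatim, applying the Ptolemy inequality (Corollary \ref{Ptolemy inequality}) to the quadrilateral with vertices $O=(0,0)$, $B'=(a-1,b)$, $C'=(a,b)$ and $D'=(a,b-1)$. The three short sides $\ell_{B'C'}$, $\ell_{C'D'}$ and $\ell_{B'D'}$ are respectively a horizontal, a vertical and a diagonal edge of the triangulation $T$, so each has Markov distance $1$; and $O,B',C',D'$ are in convex position with diagonals $\ell_{OC'}$ and $\ell_{B'D'}$, since the three near-corner points form an ``L'' around $C'$ and $O$ sees them from the far side. Corollary \ref{Ptolemy inequality} then gives $|OC'|\ge |OB'|+|OD'|$, which is precisely the corner reduction.

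With this in hand I would prove by induction on $k$, for $0\le k\le i$, the intermediate inequality
\[ m_{q+i,\,p+i}\ \ge\ \sum_{j=0}^{k}\binom{k}{j}\,m_{q+i-k+j,\ p+i-j}. \]
The case $k=0$ is trivial. For the step from $k$ to $k+1$ with $k\le i-1$, every index occurring satisfies $q+i-k+j\ge 1$ and $p+i-j\ge 1$, so the corner reduction applies to each summand; substituting
\[ m_{q+i-k+j,\,p+i-j}\ \ge\ m_{q+i-(k+1)+j,\,p+i-j}+m_{q+i-(k+1)+(j+1),\,p+i-(j+1)} \]
and collecting terms via $\binom{k}{j}+\binom{k}{j-1}=\binom{k+1}{j}$ yields the inequality for $k+1$. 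Taking $k=i$ gives the corollary. Finally, since each $m_{q+j,\,p+i-j}$ is a positive integer and each binomial coefficient is at least $1$, discarding all summands except $j=0$ and $j=i$ leaves $m_{q+i,\,p+i}\ge m_{q,\,p+i}+m_{q+i,\,p}$, which is Theorem \ref{thm 2}.

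The one genuine subtlety, and my reason for insisting on the corner reduction for \emph{all} $a,b\ge1$, is that the intermediate summands need not stay below the diagonal: $m_{q+i-k+j,\,p+i-j}$ lies on or above the diagonal exactly when $2j\le p-q+k$, which already occurs once $i>q-p$. For such terms Theorem \ref{thm 2'} as originally stated (corner at $(q+1,p)$ with $p\le q$) is not directly available. The remedy is that the Ptolemy argument above is insensitive to the position of the corner relative to the diagonal: the length-$1$ computation for the three short sides and the convexity of $O,(a-1,b),(a,b),(a,b-1)$ hold for every $a,b\ge1$, including the diagonal case $a=b$. Thus the main obstacle is simply checking this convexity in full generality (a short sign-of-cross-product verification); once that is done, the remainder is the routine Pascal bookkeeping indicated above.
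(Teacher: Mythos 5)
Your argument is correct and is essentially the paper's own proof: the paper obtains this corollary by iterating Theorem~\ref{thm 2'} (inequality~(\ref{strong eq})) and collecting coefficients exactly as in your Pascal-rule induction. The only difference is that you explicitly justify the corner reduction $m_{a,b}\ge m_{a-1,b}+m_{a,b-1}$ for all $a,b\ge 1$, including intermediate indices on or above the diagonal where Theorem~\ref{thm 2'} as stated (with $0\le p\le q$) does not literally apply; the paper uses the reduction there without comment, so your convexity verification (equivalently, one could invoke the symmetry $m_{a,b}=m_{b,a}$) is a legitimate and welcome piece of care rather than a change of method.
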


\section{The main result}\label{sect main result}
In this section we study the monotonicity of the Markov numbers on lines of a given slope. To be more precise, let $F=\{(q,p)\in \mathbb{Z}^2 \mid 1\le p<q,\ \gcd(p,q)=1\}$ denote the domain of the Markov number function $(q,p)\mapsto m_{\frac{p}{q}}$. For any slope $a\in\mathbb{Q}$ and any $y$-intercept $b\in\mathbb{Q}$, define the function 
\[M_{a,b}\colon \{ (x,y)\in F\mid y=ax+b\} \longrightarrow \mathbb{Z},\  (x,y)\mapsto m_{\frac{y}{x}}.\] 
We say that the Markov numbers {\em increase with $x$}, respectively {\em decrease with $x$},  along the line $y=ax+b$ if  $M_{a,b}(x,y) < M_{a,b}(x',y') $ whenever $x<x'$, respectively if  $M_{a,b}(x,y) > M_{a,b}(x',y') $ whenever $x<x'$. We say that the Markov numbers are monotonic along the line $y=ax+b$ if they increase with $x$ or decrease with $x$ along the line.

  It seems that there are critical slopes $a_1<a_2<-1$ such that 
\begin{itemize}
\item[(i)] the  Markov numbers increase with $x$ along any line of slope $a\ge a_2$; 
\item[(ii)] the Markov numbers decrease with $x$  along any line of slope $a\le a_1$;
\item[(iii)] for every $a$ with $a_1< a<a_2$, there exists a line of slope $a$ along which the Markov numbers are not monotonic. 
\end{itemize}
 At this time, we do not know what the numbers $a_1$ and $a_2$ should be, but the following theorem settles 
 the question for slopes of the form $a=-\frac{n+1}{n}$. 

\begin{thm}[Theorem \ref{main thm}]
\  
\begin{itemize}
\item[(a)] The Markov numbers increase with $x$ along any line $y=ax+b$ with slope $a\ge -\frac{8}{7}$. 
\smallskip

\item[(b)] The Markov numbers decrease with $x$ along any line $y=ax+b$ with slope $a\le -\frac{5}{4}$.
\smallskip

\item[(c)] There exist  lines of slope  $-\frac{6}{5}$ and $-\frac{7}{6}$  along which the Markov numbers are not monotonic. 
\end{itemize}

\end{thm}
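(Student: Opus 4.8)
The plan is to pass from Markov \emph{numbers} to the translation-invariant Markov \emph{distance}. Since the triangulation $T$ is invariant under the $\mathbb{Z}^2$-action, the length $|AB|$ depends only on the difference $B-A$; I will write $m_v=|A,A+v|$ for a lattice vector $v$ and use freely the reflection symmetry $m_{(q,p)}=m_{(p,q)}$, which is exactly what produced the cancellation $|BE|=|DE|$ in the proof of \eqref{constant sum}. With this notation, monotonicity of the Markov numbers along a line $y=ax+b$ is a statement about the single sequence $k\mapsto m_{B_0+kv}$, where $v=(s,r)$ is the primitive direction vector of the line with $s>0$. Because $m_v$ is defined at \emph{every} lattice point and the desired inequalities are transitive, it suffices to prove the one-step comparison $m_B<m_{B+v}$ (for (a)) or $m_B>m_{B+v}$ (for (b)) for every base point $B$.

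The key reduction is to decompose the primitive step $v=(s,r)$ into elementary lattice steps. For $a\ge-1$ the vector $v$ is a non-negative integer combination of the three $m$-increasing steps $(1,0)$, $(0,1)$, $(1,-1)$, so $m_B<m_{B+v}$ is immediate from the inequalities of Section~\ref{sect 3}. For $a<-1$ I write $a=-(1+d/s)$ with $d=|r|-s>0$ and decompose $v=s\,(1,-1)+d\,(0,-1)$: that is, $s$ diagonal steps, each of which increases $m$ by the constant-sum inequality \eqref{constant sum}, followed by $d$ downward steps, each of which decreases $m$ (the reverse of \eqref{constant denominator}). The entire difficulty of (a) is thus concentrated in the window $-\tfrac87\le a<-1$, and all of (b) lives in $a\le-\tfrac54$; in each case the outcome is a race between the accumulated diagonal increase and the downward decrease, governed by the ratio $d/s$.

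To run this race quantitatively I would iterate the strengthened relation \eqref{strong eq}, $m_{q+1,p}\ge m_{q,p}+m_{q+1,p-1}$, together with the Ptolemy inequality (Corollary~\ref{Ptolemy inequality}), splitting $B,B+v$ through a well-chosen intermediate lattice point and using the symmetry $m_{(q,p)}=m_{(p,q)}$ to cancel the dominant cross-term, leaving a remainder whose sign is controlled by $d/s$. The thresholds should then emerge as the exact cut-offs of this remainder: $d/s\le\tfrac17$ (i.e. $a\ge-\tfrac87$) forces a net increase and $d/s\ge\tfrac14$ (i.e. $a\le-\tfrac54$) forces a net decrease. For part (c) it is enough to exhibit one line of slope $-\tfrac65$ (primitive direction $(5,-6)$) and one of slope $-\tfrac76$ (primitive direction $(6,-7)$), each carrying three points of $F$ along which the Markov numbers first increase and then decrease; since $\tfrac16,\tfrac15$ lie strictly between the thresholds $\tfrac17$ and $\tfrac14$ this is consistent, and the failure is verified by a finite computation of the relevant $m_{q,p}$ from the perfect-matching count.

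The main obstacle I expect is making the quantitative race \emph{uniform in the base point} and sharp. Because each diagonal step multiplies $m$ by a factor that is not constant and each downward step shrinks it by another varying factor, a crude multiplicative bound will not distinguish $d/s=\tfrac17$ from $d/s=\tfrac16$. One must therefore track the actual recursion produced by \eqref{strong eq} and identify the extremal configurations (the slowest-growing diagonals and the fastest-shrinking descents) that pin the constants $\tfrac17$ and $\tfrac14$ exactly; this sharpness is precisely what forces the intermediate slopes $-\tfrac65$ and $-\tfrac76$ to be genuinely non-monotone and handled separately in (c).
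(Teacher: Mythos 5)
Your reductions are sound as far as they go: passing to the translation-invariant quantities $m_{q,p}$, reducing monotonicity to a one-step comparison along the primitive direction, disposing of all slopes $a\ge -1$ via the inequalities of Section~\ref{sect 3} (the paper makes the same remark after restating Theorem~\ref{main thm}), and proving (c) by exhibiting explicit non-monotone triples (the paper uses $m_{\frac{13}{14}}>m_{\frac{7}{19}}<m_{\frac{1}{24}}$ on a line of slope $-\frac{6}{5}$ and $m_{\frac{16}{17}}>m_{\frac{9}{23}}<m_{\frac{2}{29}}$ on a line of slope $-\frac{7}{6}$). But the entire content of (a) and (b) is concentrated in the ``race'' that you describe without running, and the mechanism you propose for it cannot work as stated. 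First, the sign of the net change over one primitive step cannot be ``a remainder whose sign is controlled by $d/s$'' uniformly in the base point: your own part (c) shows that for $d/s=\tfrac15$ and $\tfrac16$ the sign genuinely depends on where the step sits, so position relative to the diagonal and the $x$-axis must enter the argument. Second, iterating \eqref{strong eq} and Corollary~\ref{Ptolemy inequality} can only ever produce \emph{lower} bounds on Markov distances; the constant-denominator inequality says a downward step decreases $m$ but gives no quantitative lower bound on the shrink factor, so no composition of these one-step inequalities can certify the strict \emph{decrease} required in (b). Third, you expect $\tfrac17$ and $\tfrac14$ to emerge as ``exact cut-offs''; the paper explicitly does not claim this -- it states that the true critical slopes $a_1<a_2$ are unknown, and $-\tfrac87$, $-\tfrac54$ are merely the values of the form $-\tfrac{n+1}{n}$ at which its estimates close.

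What the paper actually does is nonlocal and asymptotic, and none of it is recoverable from local iteration. Lemma~\ref{compare two edges}, a Ptolemy consequence for a parallelogram of translated segments, shows that the ratio $|OE|/|OF|$ along a segment of fixed slope can only decrease under translation in a suitable cone; hence it suffices to bound the \emph{limiting} ratio after translating the pair to an extreme position -- the row $p=1$ for (b), where $m_{q,1}=\calf_{2q+1}$ is an odd-indexed Fibonacci number, and the diagonal $p=q-1$ for (a), where $m_{q,q-1}=\calp_{2q-1}$ is an odd-indexed Pell number (Lemmas \ref{lem:m>m}, \ref{lim m/m}, \ref{lim m/m 2}, \ref{lem 78}). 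Evaluating these limits requires Binet-type closed forms together with Lemma~\ref{first few terms}, the recursion $f_n=3f_1f_{n-1}-f_{n-2}$ for Markov distances along non-primitive vectors, which in turn rests on the Markov-triple identity of Lemma~\ref{lemABC}; the thresholds are exactly where numerical margins such as $\lim_{q\to\infty} m_{q,6}/m_{q+4,1}\approx 1.026>1$ and $3^{15}\bigl(\tfrac{2-\sqrt{2}}{4}\bigr)^{15}(3+\sqrt{8})^{7}>1.002$ barely survive. Finally, slopes strictly below $-\tfrac54$ (resp.\ strictly between $-\tfrac87$ and $-1$) are not handled by transitive one-step comparisons at all, but by a finite case analysis ($6$, resp.\ $21$, lattice points) against a reference line of the critical slope, using Theorem~\ref{thm 1new}. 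To complete your proposal you would need to supply substitutes for all three ingredients -- the translation-monotonicity lemma, the recursion for non-primitive distances, and the Fibonacci/Pell limit computations -- and your present outline contains none of them.
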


\begin{remark} It is not hard to see that the three conjectures proved in Theorem \ref{thm 1new} imply that the Markov numbers increase with $x$ on all lines of slope $a\ge -1$. Thus part (a) of Theorem \ref{main thm} sharpens the results of Theorem \ref{thm 1new}. 
\end{remark}

\begin{remark}
  Part (c) of the theorem is proved by the following examples. 
\[
\begin{array}{ccl}
 m_{\frac{13}{14}} > m_{\frac{7}{19}}  < m_{\frac{1}{24} } 
  &   \textup{on } y=-\frac{6}{5}x+\frac{149}{5}
\\[5pt]
 m_{\frac{16}{17}} > m_{\frac{9}{23}} < m_{\frac{2}{29}}
  &   \textup{on } y=-\frac{7}{6}x+\frac{215}{6}
\end{array}
\]
Indeed, 
\[
 m_{\frac{13}{14}}=7\,645\,370\,045 > m_{\frac{7}{19}} =6\,684\,339\,842 < m_{\frac{1}{24} } = 7\,778\,742\,049
\]
 and
\[
m_\frac{16}{17}
 =1\,513\,744\,654\,945 > m_\frac{9}{23}=1\,490\,542\,435\,045<
m_\frac{2}{29}=2\,076\,871\,684\,802\]

\end{remark}
%

\section{Proof of Theorem \ref{main thm}} 
Before we start, we recall some facts about Fibonacci numbers and Pell numbers.
Let $\calf_0=0, \calf_1=\calf_2=1, \calf_3=2, \calf_n=\calf_{n-1}+\calf_{n-2}$ denote the Fibonacci sequence, and $\calp_0=0, \calp_1=1, \calp_2=2, \calp_3= 5, \calp_n=2\calp_{n-1}+\calp_{n-2}$ the Pell sequence.
It is well known that the odd indexed Fibonacci and Pell numbers are Markov numbers. In fact $m_\frac{1}{q}=\calf_{2q+1}$ and $m_\frac{n}{n+1}=\calp_{2n+1}$.
\smallskip

This rest of the section is devoted to the proof of the main result. We prove part (b) first. The general strategy is as follows. We want to study the difference $m_{q,p}-m_{q+s,p-t}$ where $E(q,p)$ and $F=(q+s,p-t)$ are two neighboring lattice points on a line of slope $a=-\frac{s}{t}$. Let us define a new pair of points $(E',F')$ by translating the pair $(E,F)$, we write $E'=(q',t+1)$, $F'=(q'+s,1)$. Then $E',F'$ lie on a different line of same slope, and the point $F'$ corresponds to a Fibonacci number.
We compare the difference between the numbers associated to the points $E$ and $F$ with the difference between the numbers associated to the points $E'$ and $F'$ and then let $q'$ go to infinity. The fact that $F'$ corresponds to a Fibonacci number gives us control over the limit.

For part (a), the proof is similar except that we translate that pair $(E,F)$ to a pair $(E',F')$ where now $E'=(q',q'-1)$ corresponds to a Pell number.

We start with the following observation.

\begin{lemma}\label{lemABC}
Assume that $A,B,C$ are non-colinear lattice points such that the triangle $ABC$ does not contain other lattice points besides $A,B,C$. Then
$$|AB|^2+|AC|^2+|BC|^2=3|AB|\,|AC|\,|BC|$$
\end{lemma}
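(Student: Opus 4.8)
The plan is to show that the triple $(|AB|,|AC|,|BC|)$ is a Markov triple; the asserted identity is then nothing but the defining equation \eqref{eq M}. First I would extract the geometric consequence of the hypothesis that the triangle $ABC$ is empty: by Pick's theorem its area equals $\tfrac12$, so $\det(\overrightarrow{AB},\overrightarrow{AC})=\pm1$ and $\{\overrightarrow{AB},\overrightarrow{AC}\}$ is a basis of $\mathbb{Z}^2$. In particular all three sides are primitive segments, hence arcs, and each pair among $\overrightarrow{AB},\overrightarrow{AC},\overrightarrow{BC}$ is again a basis, since their pairwise determinants are $\pm1$.

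The engine of the proof is a single flip combined with the generalized Ptolemy equality (Corollary \ref{lem 11}). Set $A'=B+C-A$. Because $\{\overrightarrow{AB},\overrightarrow{AC}\}$ is a basis, the quadrilateral with vertices $A,B,A',C$ is a parallelogram that is a fundamental domain, hence contains no lattice point other than its four vertices. Its diagonals are $\ell_{AA'}$ and $\ell_{BC}$, and by translation invariance of the length function $|BA'|=|AC|$ and $|A'C|=|AB|$. Applying Corollary \ref{lem 11} to this parallelogram gives
\[
|AA'|\,|BC| \;=\; |AB|\,|A'C| \;+\; |BA'|\,|CA| \;=\; |AB|^2+|AC|^2 .
\]
Writing $x=|AB|$, $y=|AC|$, $z=|BC|$ and $z'=|AA'|$, this reads $zz'=x^2+y^2$. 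The key algebraic observation is that, given $zz'=x^2+y^2$, the Markov equation for $(x,y,z)$ is equivalent to the single symmetric relation $z+z'=3xy$: indeed $x^2+y^2+z^2=zz'+z^2=z(z+z')$, so the equation $x^2+y^2+z^2=3xyz$ holds iff $z+z'=3xy$, and this last condition is unchanged under $z\leftrightarrow z'$. Since the flipped triangle $ABA'$ is again empty and has side lengths $x,y,z'$, the same computation shows that the Markov identity holds for $(x,y,z)$ if and only if it holds for $(x,y,z')$.

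This sets up an induction. The base case consists of the triangles all of whose sides lie in $T$; for these $|AB|=|AC|=|BC|=1$ and the identity reads $3=3$. For the inductive step one flips the side of largest length and uses that its flip partner is strictly shorter (this being the content of the termination argument), invoking the induction hypothesis through the equivalence above. Concretely, the three primitive side-directions of an empty triangle are $\{u,w,w-u\}$ for a basis $u,w$, and a flip is the passage, in the Stern--Brocot tessellation, from a triangle to an adjacent one; descending toward the root given by a $T$-triangle terminates after finitely many steps.

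The main obstacle is precisely this termination: I must exhibit a well-founded complexity on empty triangles that strictly decreases under the reducing flip. The natural candidate is the total number of crossings of the three sides with $T$, equivalently the total number of tiles of the three snake graphs, a nonnegative integer that vanishes exactly on the $T$-triangles. The delicate point is that the $\ell^1$-length of an edge vector is \emph{not} monotone under flips: a segment of direction $(1,1)$ and its flip partner of direction $(-1,1)$ have equal $\ell^1$-length but crossing numbers $1$ and $0$, so one must argue with the crossing number itself and verify that a strictly reducing flip exists whenever some side fails to be a $T$-edge. Conceptually this step is automatic: the three sides, having pairwise unimodular direction vectors, project to a triangulation of the once-punctured torus, so $(|AB|,|AC|,|BC|)$ is the specialization at $x_1=x_2=x_3=1$ of a cluster, hence a Markov triple by \cite{BBH,P}; the inductive argument above is merely the elementary incarnation of this fact through the Ptolemy flip.
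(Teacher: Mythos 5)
Your route is genuinely different from the paper's, which disposes of this lemma in two lines: an empty lattice triangle projects to a triangulation of the once-punctured torus, the corresponding cluster variables form a cluster, and clusters of this cluster algebra specialize to Markov triples by \cite{P,BBH} --- exactly the observation you relegate to your closing remark. Your primary argument --- the generalized Ptolemy equality on the fundamental parallelogram giving $zz'=x^2+y^2$, the symmetric reformulation $z+z'=3xy$ of the Markov equation, and induction along flips down to the $T$-triangles --- is an elementary Vieta/Farey-descent incarnation of the same fact, and its local steps are all correct: emptiness gives $\det(\overrightarrow{AB},\overrightarrow{AC})=\pm1$ by Pick, so the parallelogram $ABA'C$ with $A'=B+C-A$ contains no extra lattice points and all its sides and diagonals are primitive, hence Corollary \ref{lem 11} applies, and translation invariance of the length function justifies $|BA'|=|AC|$, $|A'C|=|AB|$. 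What the cluster-algebra proof buys is brevity; what your descent buys is a self-contained argument that never leaves the plane.

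The one step you should repair rather than merely flag is termination, and your instinct that it is the delicate point is right. The claim ``flip the side of largest length and its flip partner is strictly shorter'' is not justified: from $zz'=x^2+y^2$ alone, $z\ge x,y$ only yields $z'\le 2z$ (indeed $z'>z$ can occur, e.g.\ $x=y=z=1$, $z'=2$); strict shortening under the reducing flip is the classical Vieta descent, which presupposes the Markov equation you are trying to prove. But no length monotonicity is needed. The unordered slope triple of an empty triangle, $\{s(u),s(w),s(w-u)\}$ in your notation, is a triangle of the Farey tessellation of $\mathbb{Q}\cup\{\infty\}$, and your own computation shows a geometric flip keeps two of the slopes and replaces the third by the other Farey vertex across the corresponding edge (e.g.\ $s(w-u)\mapsto s(w+u)$). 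Since the dual graph of the Farey tessellation is a tree, the tree distance from the current slope triple to the base triple $\{0,-1,\infty\}$ is a well-founded complexity, and every non-base triangle admits a flip strictly decreasing it by $1$; the base triple is realized exactly by the triangles with all three sides in $T$, where the identity reads $3=3$. With that substitution for your crossing-number candidate, your induction closes, and your final paragraph becomes not a crutch but an independent second proof coinciding with the paper's.
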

\begin{proof}
The points $A,B,C$ define a triangulation of the torus and the corresponding cluster variables form a cluster in the cluster algebra. Hence the Markov numbers $|AB|,\,|AC|,\,|BC|$ form a Markov triple, see \cite{P, BBH}.
\end{proof}

\begin{lemma}\label{first few terms}
Let $p,q$ be coprime positive integers and let $f_n=m_{nq,np}$. Thus $f_0=0$ and $f_1=m_{q,p}$ is the Markov number. Then, for $n\ge 2$,
\[   f_n=3f_1 f_{n-1}-f_{n-2}\]

As a consequence, $f_n=\frac{c}{\sqrt{9c^2-4}}(\alpha^n-\alpha^{-n})$ where $c=m_{q,p}$ and $\alpha=(3c+\sqrt{9c^2-4})/2$ is the larger root of $x^2-3cx+1=0$.
\end{lemma}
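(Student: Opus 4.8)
The plan is to prove the three-term recurrence first and then read off the closed form by solving it. Throughout write $c=f_1=m_{q,p}=|OP_1|$ with $P_i=(iq,ip)$ and primitive direction $v=(q,p)$, and let $a_n$ denote the arc on the once-punctured torus obtained as the image under $\pi$ of the left deformation $\gamma^L_{OP_n}$, so that $|a_n|=f_n$ by Definition~\ref{def distance} and equation~(\ref{eq markov=length}). Geometrically $a_n$ is the arc of slope $p/q$ that wraps $n$ times around the torus; in particular $a_0$ is the constant arc at the puncture, consistent with $f_0=|OO|=0$. I would also fix the simple closed curve $\zeta$ of slope $p/q$ on the torus, namely the projection of a line of slope $p/q$ that misses the lattice, so that $\zeta$ runs parallel to the strands of $a_{n-1}$.

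The heart of the argument is a single application of the skein relation (Theorem~\ref{skein relations}). For $n\ge 2$ the curves $\zeta$ and $a_{n-1}$ can be isotoped so that they meet in exactly one transverse point $x$. Smoothing the multicurve $C=\zeta\cup a_{n-1}$ at $x$ produces, on the two sides, the arc that wraps one more time and the arc that wraps one fewer time: one resolution is $a_n$ and the other is $a_{n-2}$. Since $C$ is a disjoint union, $x_C=x_\zeta\,x_{a_{n-1}}$, so specializing the identity $x_C=x_{C_+}+x_{C_-}$ at $x_1=x_2=x_3=1$ and using~(\ref{eq markov=length}) yields the relation
\[ |\zeta|\,f_{n-1}=f_n+f_{n-2}. \]

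It then remains to identify the constant $|\zeta|$, and I would show $|\zeta|=3c=3f_1$. The quickest route is the case $n=2$ of the displayed identity, which reads $|\zeta|\,f_1=f_2+f_0=f_2$, together with an independent evaluation $f_2=3c^2$. For the latter I would choose a primitive vector $w$ with $\det(v,w)=1$, so that the triangle $O,P_1,P_1+w$ is unimodular and contains no other lattice points; Lemma~\ref{lemABC} then gives the Markov equation among $|OP_1|,|O(P_1+w)|,|P_1(P_1+w)|$, and feeding an empty quadrilateral assembled from this fundamental cell into the generalized Ptolemy equality (Corollary~\ref{lem 11}) produces $f_2=3c^2$. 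Substituting $|\zeta|=3f_1$ then gives the asserted recurrence $f_n=3f_1f_{n-1}-f_{n-2}$ for all $n\ge 2$. (An alternative, purely combinatorial, derivation is to view $\calg(\gamma^L_{OP_n})$ as $n$ copies of the band graph of $\ell_{OP_1}$ glued in series and to run a $2\times 2$ transfer matrix of determinant $1$ and trace $3c$ through its perfect matchings; I would keep the skein argument as the primary route since it stays closest to the tools already set up.)

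I expect the main obstacle to be the topological bookkeeping in the skein step: one must verify that $\zeta$ and $a_{n-1}$ really can be arranged to cross exactly once and that the two smoothings are \emph{precisely} $a_n$ and $a_{n-2}$, with the degenerate resolution $a_0$ correctly carrying length $0$ and with no spurious contractible components or kinks created. This is where the conventions for closed curves and multicurves from \cite{MSW2}, together with Definition~\ref{def distance}, have to be applied with care. Once the recurrence is established, the closed form is routine: $f_n$ satisfies a linear recurrence with characteristic polynomial $x^2-3cx+1$, whose roots are $\alpha$ and $\alpha^{-1}$ with $\alpha=(3c+\sqrt{9c^2-4})/2$ and $\alpha-\alpha^{-1}=\sqrt{9c^2-4}$; imposing $f_0=0$ and $f_1=c$ gives $f_n=\frac{c}{\sqrt{9c^2-4}}(\alpha^n-\alpha^{-n})$, as claimed.
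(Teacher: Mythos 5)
There is a genuine gap at the central skein step. The curves $\zeta$ and $a_{n-1}$ can never be isotoped to meet in exactly one transverse point: both represent multiples of the primitive class $(q,p)$ in the first homology of the torus (the puncture does not change this count), so their algebraic intersection number is $\det\bigl((q,p),(n-1)(q,p)\bigr)=0$, and for transverse representatives the number of crossings has the same parity as the algebraic count --- it is always even. In fact, since $\zeta$ is parallel to the strands of $a_{n-1}$, the two curves can be made \emph{disjoint} (lift to the universal cover: the lines in the preimage of $\zeta$ can be chosen at definite distance from the lattice line carrying $\gamma^L_{OP_{n-1}}$), so there is no essential crossing to smooth at all. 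Forcing an intersection creates a bigon with two removable crossings, and resolving those produces single arcs with kinks and contractible loops rather than the clean pair $a_n$, $a_{n-2}$; note that Theorem \ref{skein relations} as stated presupposes kink-free curves, and the minus sign in the correct bracelet recursion $\Brac_k\zeta=\zeta\,\Brac_{k-1}\zeta-\Brac_{k-2}\zeta$ of \cite{MSW2} is precisely the trace of the kink term your one-crossing picture misses. The gap propagates to your evaluation of $|\zeta|$: you identify $|\zeta|=3f_1$ from the $n=2$ instance of the same unproven identity, and there the claimed resolution $a_0$ is not a legitimate component of a multicurve at all --- $f_0=|OO|=0$ is a convention from Definition \ref{def distance}, not an output of the skein calculus. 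Your target identity $|\zeta|\,f_{n-1}=f_n+f_{n-2}$ is numerically true (it is equivalent to the recurrence), but the proposed derivation cannot establish it.

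Your program can be salvaged, and the remark following Lemma \ref{first few terms} in the paper does exactly that: replace the arcs $a_n$ by the bracelets $\Brac_n\zeta$, import $|\zeta|=3f_1$ and $|\Brac_n\zeta|=3f_n$ from \cite{CS5} (where the factor $3$ for curves moved off the puncture is established, rather than derived from an $n=2$ skein identity), and invoke the Chebyshev recursion of \cite{MSW2} to get $3f_n=3f_1\cdot 3f_{n-1}-3f_{n-2}$. The paper's primary proof avoids closed curves altogether: for $n=2$ it runs two Ptolemy identities on a pair of parallelograms and then Lemma \ref{lemABC} to obtain $f_2=3f_1^2$ --- your sketch of $f_2=3c^2$ via Lemma \ref{lemABC} and Corollary \ref{lem 11} is essentially this argument, though it would need to be written out --- and for $n>2$ it applies one skein relation to the collinear points $A=(0,0)$, $B=(q,p)$, $C=(2q,2p)$, $D=(nq,np)$, whose left deformations genuinely cross, yielding $f_2f_{n-1}=f_1f_n+f_1f_{n-2}$. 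Your final step, solving the linear recurrence with $f_0=0$, $f_1=c$ and $\alpha-\alpha^{-1}=\sqrt{9c^2-4}$, is correct.
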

\begin{proof}
The statement is trivially true for $n=0,1$. 
Assume $n=2$. Let $A=(0,0)$, $B=(q,p)$ and let $C=(q',p') $ be a lattice point such that the triangle $ABC$ does not contain any lattice points besides $A,B$ and $C$.  

Let $E$ be such that $ABEC$ is a parallelogram, thus $E=(q+q',p+p')$,
and let  $D,F$ be such that $ACFD$ is the parallelogram obtained by translation of $ABEC$  by $(-q,-p)$, see Figure  \ref{fig 82}. Note that $f_1=|AB|$ and $f_2=|BD|$.
\begin{figure}
\begin{tikzpicture}[scale=.8]
\draw(-3,-1)--(1,2)--(3,1) (1,2)--(0,0);
\draw(-3,-1)--(3,1)--(4,3)--(-2,1)--(-3,-1);
\node at (4.2,1) {$B=(q,p)$};
\node at (6,3) {$E=(q+q',p+p')$};
\node at (1,-0.4) {$A=O=(0,0)$};
\node at (0.3,2.4) {$C=(q',p')$};
\node at (-3.2,1.5) {$F=(q'-q,p'-p)$};
\node at (-2.15,-1.45) {$D=(-q,-p)$};
\node at (2.2,1.7) {a};
\node at (1.6,0.25) {c};
\node at (0.75,1) {b};
\end{tikzpicture}\caption{Picture for Lemma \ref{first few terms}.}\label{fig 82}
\end{figure}
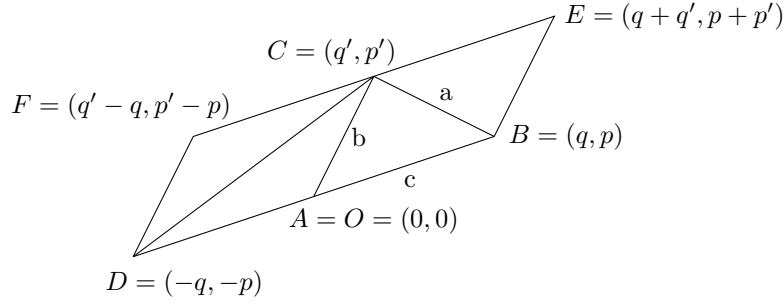

The skein relations imply the following identities
\begin{align} \label{eq51}
|CD| &=\left(|AC|\,|DF| +|AD|\,|CF|\right)/|AF|
=\left(|AC|^2 +|AB|^2 \right)/|BC| 
\end{align}

and
\begin{align}
 |BD|&=\left(|AD|\,|BC|+|AB|\,|CD|\right)/|AC| 
 =|AB|\left(|BC|+|CD|\right)/|AC|
\end{align} 
and using equation (\ref{eq51}) this implies
\[|BD|= |AB|\frac{
|BC|^2+|AC|^2+|AB|^2}{|BC| \,|AC|}
\]
Now Lemma \ref{lemABC} implies 
\[|BD|= |AB|\frac{
3|BC|\,|AC|\,|AB|}{|BC| \,|AC|}
=3|AB|^2
\]
 This proves the case $n=2$. 

For $n>2$, let $A=(0,0), B=(q,p), C=(2q,2p) $ and $D=(nq,np)$. The skein relations imply
\[
|\zg_{AC}^L|
|\zg_{BD}^L|
=
|\zg_{AD}^L|
|\zg_{BC}^L|
+
|\zg_{AB}^L|
|\zg_{CD}^L|
\]
thus
\[|AC|\,|BD| = |AD|\,|BC| + |AB|\,|CD|
\]
and hence
\[f_2f_{n-1} =f_n f_1+f_1f_{n-2} \]
and the result follows since $f_2=3f_1^2$.
\end{proof}
\begin{remark}
An alternative proof of Lemma \ref{first few terms} can be given using bracelets in the torus as follows. If $\zeta$ is a closed curve in the torus then its $k$-bracelet $\Brac_k\zeta$ is defined as the $k$-fold concatenation of $\zeta$ with itself. It is known, see \cite{MSW2}, that the bracelets satisfy the following Chebyshev recursion
\begin{equation}
\label{eqbrac}
\Brac_k\zeta=\zeta\,\Brac_{k-1}\zeta-\Brac_{k-2}\zeta.
\end{equation}
 Now let $\zeta $ be the closed curve obtained from the arc $\ell_{\frac{p}{q}}$ by moving the arc infinitesimally away from the puncture. Then is is shown in \cite{CS5} that $|\zeta|=3\,|\ell_{\frac{p}{q}}|=3f_1$. 
 Similarly, the bracelet $\Brac_n \zeta$ is obtained from the arc $\zg_{nq,np}^L$ by moving the arc infinitesimally away from the puncture. Again we have 
 $|\Brac_n\zeta|=3 f_n$.
 Now equation(\ref{eqbrac}) implies
 $3f_n=3f_1\,3f_{n-1} - 3f_{n-2}$ and we are done.
\end{remark}

\begin{lemma}\label{compare two edges}

Assume $O,E,F,E',F$ be lattice points such that $O,E,F$ are not colinear, $F'$ is such that $\overrightarrow{FF'}=s \overrightarrow{EO}+t\overrightarrow{OF}$ with $s,t\in \mathbb{R}_{>0}$, $E'$ is such that $\overrightarrow{OE'}=\overrightarrow{OE}+\overrightarrow{FF'}$. Thus $EFF'E'$ is a parallelogram. See Figure \ref{fig 83}. %
Then 
\[|OE|\,|OF'|\ge |OE'|\,|OF|.\]

\end{lemma}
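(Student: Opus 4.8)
The plan is to deduce the inequality from a single application of the Ptolemy inequality (Corollary \ref{Ptolemy inequality}), after using translation invariance to rewrite two of the four distances. The structural fact I will rely on is that the Markov distance depends only on the displacement vector: since the triangulation $T$ is $\mathbb{Z}^2$-periodic and a snake graph is determined by its crossing sequence up to translation and rotation by $180^\circ$ (as recorded just before Lemma \ref{lem 21}), we have $|XY| = |X'Y'|$ whenever $\overrightarrow{XY} = \overrightarrow{X'Y'}$. This will let all four quantities in the statement be read off from a single quadruple of points.

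First I would introduce the translate $O' := O + \overrightarrow{FF'}$. Because $\overrightarrow{OE'} = \overrightarrow{OE} + \overrightarrow{FF'}$ and $F' = F + \overrightarrow{FF'}$, the triangle $O'E'F'$ is exactly the translate of the triangle $OEF$ by the vector $\overrightarrow{FF'}$; in particular $\overrightarrow{O'E'} = \overrightarrow{OE}$ and $\overrightarrow{O'F'} = \overrightarrow{OF}$, so by translation invariance
\[ |O'E'| = |OE| \qquad\text{and}\qquad |O'F'| = |OF|. \]
All four points $O,E',F',O'$ are lattice points (note $\overrightarrow{FF'}\in\mathbb{Z}^2$), which is what is needed to invoke the Ptolemy inequality.

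Next I would verify that $O,E',F',O'$ are in convex position in this cyclic order and identify the diagonals. Writing each point in the basis $\mathbf{e}=\overrightarrow{OE}$, $\mathbf{f}=\overrightarrow{OF}$ gives $O=(0,0)$, $E'=(1-s,t)$, $F'=(-s,1+t)$, $O'=(-s,t)$; the four successive turning (cross-product) values along the boundary are $1-s+t,\ 1,\ s,\ t$, which all have the same sign since $s,t>0$ (and $s<t+1$, which holds in the regime of interest, e.g.\ part (b) where $t>s$; this is the configuration of Figure \ref{fig 83}). The two diagonals are then $\ell_{OF'}$ and $\ell_{E'O'}$, and the opposite sides pair up as $(\ell_{OE'},\ell_{F'O'})$ and $(\ell_{O'O},\ell_{E'F'})$. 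Applying Corollary \ref{Ptolemy inequality} to this convex quadrilateral yields
\[ |OF'|\,|E'O'| \;\ge\; |OE'|\,|F'O'| \;+\; |O'O|\,|E'F'|. \]
Substituting $|E'O'|=|OE|$ and $|F'O'|=|OF|$ and discarding the nonnegative term $|O'O|\,|E'F'|$ gives exactly $|OE|\,|OF'| \ge |OE'|\,|OF|$, as claimed.

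The step I expect to require the most care is the convexity check together with the bookkeeping of diagonals versus sides. The whole argument hinges on $\ell_{E'O'}$ (the segment of length $|OE|$) being a \emph{diagonal} and $\ell_{F'O'}$ (the segment of length $|OF|$) being a \emph{side}, so that after substitution $|OE|$ lands on the left and $|OF|$ on the right; were the convex ordering different, the same computation would produce a different comparison. Thus the real content is showing that the hypotheses $s,t>0$ force precisely the configuration of Figure \ref{fig 83}, which I would settle via the turning test above.
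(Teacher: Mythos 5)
Your proof is correct and takes essentially the same route as the paper's: the paper likewise introduces $O'=O+\overrightarrow{FF'}$, applies the Ptolemy inequality (Corollary \ref{Ptolemy inequality}) to the quadrilateral $OE'F'O'$ with diagonals $\ell_{OF'}$ and $\ell_{E'O'}$, and finishes by translation invariance $|O'E'|=|OE|$, $|O'F'|=|OF|$ after discarding the term $|OO'|\,|E'F'|$. Your explicit convexity verification (the turning values $1-s+t,\,1,\,s,\,t$, forcing $s<1+t$) is a point the paper leaves implicit, relying on Figure \ref{fig 83}; as you note, this condition holds in every application of the lemma in the paper, where in fact $t>s$.
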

\begin{proof}
Let $O'$ be the lattice point such that $\overrightarrow{OO'}=\overrightarrow{FF'}$. Then the Ptolemy inequality implies 
$|O'E'|\,|OF'|\ge |OE'|\,|O'F'|,$
and the result follows from $|O'E'|=|OE|$ and $|O'F'|=|OF|$.
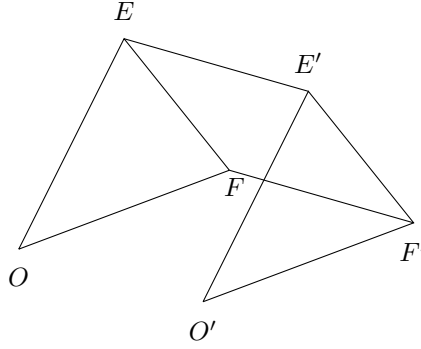
\begin{figure}
\begin{tikzpicture}[scale=.7]
\begin{scope}[shift={(8,0)}]
\draw (2,-1)--(6,0.5)--(4,3)--(2,-1);
\draw (-1.5,0)--(2.5,1.5)--(0.5,4)--(-1.5,0);
\draw (0.5,4)--(4,3);
\draw (2.5,1.5)--(6,.5);
\node [label=below: $O'$] at (2,-1) {};
\node [label=below: $F'$] at (6,.5) {};
\node [label=above: $E'$]at (4,3) {};
\node [label=below: $O$]at (-1.5,0) {};
\node [label=below: $F$] at (2.6,1.7) {};
\node [label=above: $E$]at (0.5,4) {};
\end{scope}
\end{tikzpicture}
\caption{Figure for Lemma \ref{compare two edges} and Lemma \ref{lem:m>m}}\label{fig 83}
\end{figure}
\end{proof}

\begin{lemma}\label{lem:m>m}

Let $q,p,s,t,q'$ be positive integers with $\gcd(s,t)=1$ and $q'>q+t$ and define the lattice points $E=(q,p+s)$, $F=(q+t,p)$,  $E'=(q',s+1)$ and $F'=(q'+t,1)$. Thus $EFF'E'$ is a parallelogram as shown in Figure \ref{fig 83} and the slope of the line segment $\ell_{EF}$ is  $-\frac{s}{t}$.
Assume that $\lim\limits_{q'\to\infty} \frac{|OE'|}{|OF'|}>1.$
Then $|OE|>|OF|$. 
\end{lemma}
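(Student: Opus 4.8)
The plan is to apply Lemma \ref{compare two edges} to the five points $O,E,F,E',F'$ and then let $q'\to\infty$, using the limit hypothesis to turn the resulting inequality into the strict inequality $|OE|>|OF|$. Before invoking Lemma \ref{compare two edges}, I would record that $O,E,F$ are not colinear: the relevant determinant is $qp-(p+s)(q+t)=-(qs+tp+ts)$, which is nonzero because $q,p,s,t$ are positive.

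The main point is to verify the two vector conditions required by Lemma \ref{compare two edges}. First, $\overrightarrow{OE'}=\overrightarrow{OE}+\overrightarrow{FF'}$ holds by direct computation: $\overrightarrow{FF'}=(q'-q,\,1-p)$ and $\overrightarrow{OE}+\overrightarrow{FF'}=(q,p+s)+(q'-q,1-p)=(q',s+1)=\overrightarrow{OE'}$, which also re-expresses the fact that $EFF'E'$ is a parallelogram. Second, I must exhibit positive reals $\sigma,\tau$ with $\overrightarrow{FF'}=\sigma\,\overrightarrow{EO}+\tau\,\overrightarrow{OF}$ (these play the role of the parameters internal to Lemma \ref{compare two edges}, and are \emph{not} the $s,t$ of the present statement). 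Solving the system $-\sigma q+\tau(q+t)=q'-q$, $-\sigma(p+s)+\tau p=1-p$ by Cramer's rule, the common denominator is the determinant $qs+tp+ts>0$, while the numerators of $\sigma$ and $\tau$ are $(q'-q)p-(q+t)(1-p)$ and $(p+s)(q'-q)-q(1-p)$ respectively. Each numerator grows linearly in $q'$ with positive leading coefficient, so both $\sigma$ and $\tau$ are positive for all sufficiently large $q'$.

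With these hypotheses in place, Lemma \ref{compare two edges} yields $|OE|\,|OF'|\ge|OE'|\,|OF|$, that is $\dfrac{|OE|}{|OF|}\ge\dfrac{|OE'|}{|OF'|}$, for every $q'$ large enough. The left-hand side is independent of $q'$, so letting $q'\to\infty$ and using the hypothesis $\lim_{q'\to\infty}|OE'|/|OF'|>1$ gives $\dfrac{|OE|}{|OF|}\ge\lim_{q'\to\infty}\dfrac{|OE'|}{|OF'|}>1$. Since $|OF|\ge1$, this forces $|OE|>|OF|$, as desired.

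The step I expect to be the main obstacle is the positivity of $\sigma$ and $\tau$: geometrically this is the requirement that, for large $q'$, the shift vector $\overrightarrow{FF'}$ lies in the open cone spanned by $\overrightarrow{EO}$ and $\overrightarrow{OF}$, so that Lemma \ref{compare two edges} genuinely applies. One must be careful to assert the conclusion of Lemma \ref{compare two edges} only for $q'$ beyond some threshold and then pass to the limit, rather than claiming it for every $q'>q+t$ at once.
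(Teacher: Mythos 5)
Your proposal is correct and follows essentially the same route as the paper: apply Lemma \ref{compare two edges} to $O,E,F,E',F'$, deduce $|OE|\,|OF'|\ge |OE'|\,|OF|$, and let $q'\to\infty$ using the limit hypothesis. The paper simply asserts that the hypotheses of Lemma \ref{compare two edges} hold for every $q'>q+t$, whereas you verify the cone condition explicitly via Cramer's rule (and in fact, since $p\ge 1$ gives $1-p\le 0$, your numerators $(q'-q)p-(q+t)(1-p)$ and $(p+s)(q'-q)-q(1-p)$ are positive for \emph{all} $q'>q$, so no threshold on $q'$ is even needed).
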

\begin{proof}
For every $q'>q+t$, the points $E,F,F',E'$ satisfy the conditions in Lemma \ref{compare two edges}. Thus
$ |OE|\,|OF'| \ge |OE'|\,|OF|,$
and hence 
\[ \frac{|OE|}{|OF|} \ge \frac{ |OE'|}{|OF'|}.\]
 The result now follows since, by our assumption on the limit, the right hand side is greater than $1$ when $q'$ is large.
\end{proof}

\begin{lemma}\label{lim m/m}

We have $$\lim_{q\to\infty} \frac{\ma{6}{q}}{\ma{1}{q+4}}>1.$$



\end{lemma}

\begin{proof} 
We claim that, for any positive integer $a$, 
\begin{equation}\label{m6n6}
\lim_{q\to\infty} \frac{\ma{a}{q}}{\ma{1}{q+a-2}}=\lim_{n\to\infty} \frac{\ma{a}{an}}{\ma{1}{an+a-2}}
=(\frac{3}{\sqrt{5}\phi})^{a-1}\phi^{2}, \textrm{ where }\phi=\frac{1+\sqrt{5}}{2}= 1.618\cdots
\end{equation}

The first equality follows from Lemma \ref{compare two edges}, which asserts that the sequence of positive numbers $\{\frac{\ma{a}{q}}{\ma{1}{q+a-2}}\}$ is weakly decreasing as $q$ increases. 

Next, we prove the second equality of \eqref{m6n6}. The sequence $\{\ma{1}{q}\}=(1,2,5,13,34,89,\dots)$ is odd-indexed Fibonacci sequence $\{\calf_{2q-1}\}$. So 
$\ma{1}{q}=(\phi^{2q+1} + \phi^{-2q-1})/\sqrt{5}$. 
For any two functions $f(n)$ and $g(n)$ with variable $n$, we write $f\sim g$ if $\lim f(n)/g(n)=1$ as $n\to \infty$. Then $c:=\ma{1}{n}\sim \phi^{2n+1}/\sqrt{5}$, $\alpha=(3c+\sqrt{9c^2-4})/2\sim 3c$, thus
$$\aligned
\ma{a}{an}
&=\frac{c}{\sqrt{9c^2-4}}(\alpha^a-\alpha^{-a})\quad\quad\textrm{(by Lemma \ref{first few terms})}\\
&\sim \frac{c}{3c}\alpha^a\sim \frac{1}{3}(3c)^a\sim 3^{a-1}\phi^{(2n+1)a}/(\sqrt{5})^a\\
\endaligned
$$
On the other hand,
$$\ma{1}{an+a-2}\sim \phi^{2(an+a-2)+1}/\sqrt{5}$$
So
$$\lim_{n\to\infty} \frac{\ma{a}{an}}{\ma{1}{an+a-2}}
=\frac{3^{a-1}\phi^{(2n+1)a}/(\sqrt{5})^a}{ \phi^{2(an+a-2)+1}/\sqrt{5}}
=(\frac{3}{\sqrt{5}})^{a-1}\phi^{-a+3}
=(\frac{3}{\sqrt{5}\phi})^{a-1}\phi^{2}$$
This proves \eqref{m6n6}.

Finally, for $a=6$,  the right hand side of \eqref{m6n6} is $1.026\cdots>1$. 
\end{proof}  

\begin{remark}
Let $a'$ be the solution of the equation $\lim\limits_{q\to\infty} \frac{\ma{a'}{q}}{\ma{1}{q+a'-2}}=1$. Equivalently, by \eqref{m6n6}, $a'$ satisfies the equation
$(\frac{3}{\sqrt{5}\phi})^{a'-1}\phi^{2}=1$, thus $a'=\frac{-2\ln\phi}{\ln(\frac{3}{\sqrt{5}\phi})}+1$.
Let $b$ denote the corresponding slope, thus
$$b=-\frac{a'-1}{a'-2}
=-\frac{\frac{-2\ln\phi}{\ln(\frac{3}{\sqrt{5}\phi})}}{\frac{-2\ln\phi}{\ln(\frac{3}{\sqrt{5}\phi})}-1}
=\frac{-2}{2+ \frac{\ln(\frac{3}{\sqrt{5}\phi})}{\ln\phi} }
=\frac{-2}{2+\frac{\ln(3/\sqrt{5})-\ln\phi}{\ln\phi}}
=\frac{-2}{1+\frac{\ln(3/\sqrt{5})}{\ln\phi}}$$
Or we can express $b$ as
$$b=\frac{-2\ln\phi}{\ln\phi+\ln(3/\sqrt{5})}
=-\frac{\ln((\frac{1+\sqrt{5}}{2})^2)}{\ln(\frac{1+\sqrt{5}}{2}\cdot\frac{3}{\sqrt{5}})}
=-\frac{\ln\frac{3+\sqrt{5}}{2}}{\ln \frac{3(1+\sqrt{5})}{2\sqrt{5}}}
=-1.241668489\cdots$$ 
We conjecture that $b$ is best replacement of $-5/4$ in Theorem \ref{main thm} (b): 
\end{remark}
\begin{conjecture} \label{decreasing slope}
The Markov distances decrease with $x$ along any line with slope $\le -\frac{\ln\frac{3+\sqrt{5}}{2}}{\ln \frac{3(1+\sqrt{5})}{2\sqrt{5}}}$.
\end{conjecture}

\subsection{Proof of Theorem \ref{main thm} (b)}

By Lemma \ref{lim m/m}(c), the Markov numbers are decreasing on any line with slope $-5/4$. So we are left to show the decreasing property on lines of slope $<-5/4$.

Let $(q_1,p_1), (q_2,p_2)$ be two lattice points satisfying $q_i>p_i>0$ (for $i=1,2$), $q_2>q_1$, $(p_2-p_1)/(q_2-p_1) < -\frac{5}{4}$. We claim that $\ma{p_1}{q_1}>\ma{p_2}{q_2}$. 

Consider the line $L$ through the point $(q_2,p_2)$ with slope $ -\frac{5}{4}$. If there exists an integer point on $L$, denoted $(q_3,p_3)$, such that $q_3\le q_1$ and $p_3\le p_1$. In other words, the point $(q_3,p_3)$ lies (weakly) southwest of the point  $(q_1,p_1)$. Then we have $\ma{p_1}{q_1}>\ma{p_1}{q_3}>\ma{p_3}{q_3}>\ma{p_2}{q_2}$
 where the first inequality follows from the constant numerator theorem \ref{constant numerator}, the second from the constant denominator theorem \ref{constant denominator},  
while the third inequality uses the decreasing property when the slope is equal to $-5/4$, see the left picture in Figure \ref{fig 62_1}. This completes the proof in this case.
 
Now suppose that there is no integer point on the line $L$ that lies southwest of $(q_1,p_1)$. Then there are two consecutive integer points $(q_3,p_3)$ and  $(q_4,p_4)=(q_3-4,p_3+5)$ such that $q_4<q_1<q_3$ and $p_4>p_1>p_3$,  see the right picture in Figure \ref{fig 62_1}. Then  there are only 6 possibilities for $(q_1,p_1)$ 
$$(q_3+1,p_3-1), (q_3+2,p_3-1), (q_3+3,p_3-1), (q_3+2,p_3-2), (q_3+3,p_3-2), (q_3+3,p_3-3).$$
 It follows from Theorem \ref{thm 1new} that the smallest $m_{q,p}$ for the above 6 pairs $(q,p)$ is obtained when $(q,p)=(q_3+1,p_3-1)$.
Thus we have $m_{q_1,p_1}\ge m_{q_3+1,p_3-1}>m_{q_4,p_4}\ge m_{q_2,p_2}$ where the second inequality follows from Theorem \ref{constant sum} and the last inequality from Lemma \ref{lim m/m}(c) since the slope of the line $L$ is $-5/4$.
This completes the proof of part (b) of Theorem \ref{main thm}.
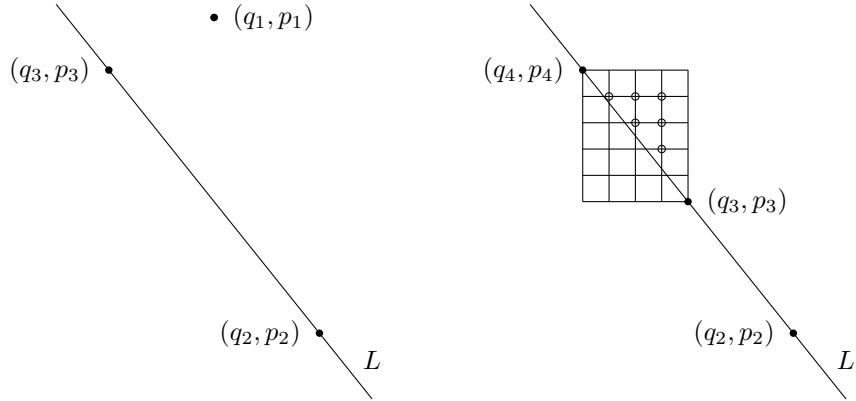
\begin{figure}[h]
\begin{tikzpicture}[scale=.35]
\node (v1)[label=right: ${(q_1,p_1)}$] at (6,13) {};
\node (v2)[label=left: ${(q_2,p_2)}$] at (10,1) {};
\node (v3)[label=left: ${(q_3,p_3)}$] at (2,11) {};
\draw  (12,-1.5)--(0,13.5);
\node at (12,0){$L$};
\fill (v1) circle[radius=4pt];
\fill (v2) circle[radius=4pt];
\fill (v3) circle[radius=4pt];
\begin{scope}[shift={(18,0)}]

\foreach \x in {2,...,6}
{
\draw (\x,11) -- (\x,6);
}
\foreach \y in {6,...,11}
{
\draw (2,\y) -- (6,\y);
}
\node (v2)[label=left: ${(q_2,p_2)}$] at (10,1) {};
\node (v3)[label=left: ${(q_4,p_4)}$] at (2,11) {};
\node (v4)[label=right: ${(q_3,p_3)}$] at (6,6) {};
\draw  (12,-1.5)--(0,13.5);
\node at (12,0){$L$};
\fill (v1) circle[radius=4pt];
\fill (v2) circle[radius=4pt];
\fill (v3) circle[radius=4pt];
\fill (v4) circle[radius=4pt];
\draw(3,10) circle[radius=4pt];
\draw(4,10) circle[radius=4pt];
\draw(5,10) circle[radius=4pt];
\draw(4,9) circle[radius=4pt];
\draw(5,9) circle[radius=4pt];
\draw(5,8) circle[radius=4pt];
\end{scope}
\end{tikzpicture}
\caption{Proof of Conjecture 6.2 (b). Slope of $L$ is $-5/4$. Left: the case $q_3\le q_1$ and $p_3\le p_1$; Right: the other case, where $(q_1,p_1)$ is one of the 6 circled points.}\label{fig 62_1}
\end{figure}

\subsection{Proof of Theorem \ref{main thm} (a) }

The proof of part (a) of the theorem is similar to the proof of part (b) except that instead of using Fibonacci numbers $m_{q,1}$ we will use  Pell numbers $m_{q,q-1}$.

We first show an analogue of Lemma \ref{lim m/m}:

\begin{lemma}\label{lim m/m 2}
We have
$$\lim_{q\to\infty} \frac{\ma{q-9}{q+7}}{\ma{q-1}{q}}>1.$$
\end{lemma}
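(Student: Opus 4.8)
The plan is to mimic the proof of Lemma~\ref{lim m/m}, replacing the Fibonacci points $\ma{1}{q}$ by the Pell points $\ma{q-1}{q}=\calp_{2q-1}$. Write $r_q=\dfrac{\ma{q-9}{q+7}}{\ma{q-1}{q}}$. First I would show that $\{r_q\}$ is weakly decreasing in $q$, so that the limit over all $q$ exists and may be computed along any convenient subsequence. Then I would evaluate that limit along the arithmetic progression $q=16b+9$, on which the numerator point factors as a multiple of a primitive vector, so that Lemma~\ref{first few terms} applies.

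For the monotonicity, set $O=(0,0)$, $E=E(q)=(q+7,q-9)$ and $F=F(q)=(q,q-1)$; these are non-colinear since $(q+7)(q-1)-(q-9)q=15q-7\neq0$. As $q$ increases by an integer $\delta>0$ the pair $(E,F)$ translates rigidly by $\delta(1,1)$, i.e. $E(q+\delta)=E(q)+\delta(1,1)$ and likewise for $F$. Solving $\delta(1,1)=s\,\overrightarrow{EO}+t\,\overrightarrow{OF}$ gives $t=16s$ and $\delta=s(15q-7)$, so $s,t>0$ for every $q\ge1$. Hence the hypotheses of Lemma~\ref{compare two edges} hold with $E'=E(q+\delta)$, $F'=F(q+\delta)$, and the lemma yields $|OE(q)|/|OF(q)|\ge|OE(q+\delta)|/|OF(q+\delta)|$, that is $r_q\ge r_{q+\delta}$. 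Being weakly decreasing and positive, $\{r_q\}$ converges, and its limit equals the limit along $q=16b+9$.

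On this subsequence the denominator is $\ma{q-1}{q}=\calp_{2q-1}=\calp_{32b+17}$, while the numerator is $\ma{q-9}{q+7}=m_{16(b+1),\,16b}$. Since $(b+1,b)$ is primitive, I would apply Lemma~\ref{first few terms} with base number $c=m_{b+1,b}=\calp_{2b+1}$ and $n=16$, giving $m_{16(b+1),16b}=\tfrac{c}{\sqrt{9c^2-4}}(\alpha^{16}-\alpha^{-16})$ with $\alpha=(3c+\sqrt{9c^2-4})/2$. Writing $\psi=1+\sqrt2$, the Pell asymptotics $\calp_m\sim\psi^m/(2\sqrt2)$ give $c\sim\psi^{2b+1}/(2\sqrt2)$, whence $\tfrac{c}{\sqrt{9c^2-4}}\to\tfrac13$, $\alpha\sim3c$, and $m_{16(b+1),16b}\sim3^{15}c^{16}\sim 3^{15}\psi^{32b+16}/(2\sqrt2)^{16}$. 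Dividing by $\calp_{32b+17}\sim\psi^{32b+17}/(2\sqrt2)$ yields
\[
\lim_{q\to\infty}r_q=\frac{3^{15}}{(2\sqrt2)^{15}\,\psi}=\frac{1}{1+\sqrt2}\left(\frac{3}{2\sqrt2}\right)^{15}.
\]

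The main obstacle is the very last step: the limit is only barely larger than $1$ (numerically about $1.002$), which is precisely why $-\tfrac{8}{7}$ is the threshold slope. To finish I would reduce the claim $\tfrac{1}{1+\sqrt2}\bigl(3/(2\sqrt2)\bigr)^{15}>1$ to the equivalent numerical inequality $3^{15}>(1+\sqrt2)\,2^{45/2}$, and then, after squaring, to $3^{30}>(3+2\sqrt2)\,2^{45}$. Using the crude bound $\sqrt2<1.41422$ this is implied by $3^{30}=205891132094649>5.82844\cdot 2^{45}\approx2.051\times10^{14}$, which holds, so the limit exceeds $1$ as required. The delicate point is entirely this numerical margin; the geometric reduction (via Lemma~\ref{compare two edges}) and the asymptotic computation (via Lemma~\ref{first few terms} and the Pell recursion) are routine adaptations of Lemma~\ref{lim m/m}.
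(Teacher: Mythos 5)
Your proof is correct and takes essentially the same route as the paper: weak monotonicity of the ratio via Lemma \ref{compare two edges}, evaluation along the subsequence $q=16b+9$ (identical to the paper's $q=16n-7$ with $n=b+1$) using Lemma \ref{first few terms} together with Pell asymptotics, and a final numerical check, with your limit $\frac{1}{1+\sqrt2}\left(\frac{3}{2\sqrt2}\right)^{15}$ algebraically equal to the paper's $3^{15}\left(\frac{2-\sqrt2}{4}\right)^{15}(3+\sqrt8)^{7}\approx 1.002$. The only differences are cosmetic: you use $\psi=1+\sqrt2$ in place of the paper's $3+\sqrt{8}=(1+\sqrt2)^2$, you spell out the monotonicity computation that the paper leaves implicit by analogy with Lemma \ref{lim m/m}, and you verify the final inequality by squaring and rational bounds rather than the paper's direct computation.
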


\begin{proof}
Similar to the proof of Lemma \ref{lim m/m}, 
we claim that, for any positive integer $a$, 
\begin{equation}\label{man}
\lim_{q\to\infty} \frac{\ma{q-a-1}{q+a-1}}{\ma{q-1}{q}}=\lim_{n\to\infty} \frac{\ma{2an-2a}{2an}}{\ma{2an-a}{2an-a+1}}
=\frac{3^{2a-1}}{2^{3a-1}}(2-\sqrt{2}). 
\end{equation}

The first equality follows from Lemma \ref{compare two edges}, which asserts that the sequence of positive numbers  $\{\frac{\ma{q-a-1}{q+a-1}}{\ma{q-1}{q}}\}$ is weakly decreasing as $q$ increases. 

Next, we prove the second equality of \eqref{man}. 
The Markov number $\ma{q-1}{q}$ is the Pell number $\calp_{2q-1}$. The sequence $\{\ma{q-1}{q}\}=(1,5,29,169,\dots)$ (for $q\ge 1$) is defined by the recursive relation $a_q=6a_{q-1}-a_{q-2}$ with initial conditions $a_1=1$, $a_2=5$. 
Using the well-known formula 
$\calp_n=\frac{(1+\sqrt{2})^n-(1-\sqrt{2})^n}{2\sqrt{2}}$ we get 
\[\ma{q-1}{q}=\calp_{2q-1}=\frac{(1+\sqrt{2})^{2q-1}-(1-\sqrt{2})^{2q-1}}{2\sqrt{2}}=\frac{2-\sqrt{2}}{4}\psi^{q} + \frac{2+\sqrt{2}}{4}\psi^{-q},\]
 where $\psi=3+\sqrt{8}.$
By Lemma \ref{first few terms} (with $c=\ma{n-1}{n}$,  $\alpha=(3c+\sqrt{9c^2-4})/2\sim 3c$):
$$\ma{2a(n-1)}{2an}=\frac{c}{\sqrt{9c^2-4}}(\alpha^{2a}-\alpha^{-2a})
\sim\frac{1}{3}(3c)^{2a}
\sim3^{2a-1}(\ma{n-1}{n})^{2a}
\sim3^{2a-1}\left(\textstyle\frac{2-\sqrt{2}}{4}\right)^{\!2a}\psi^{2an}.
$$
On the other hand
$$\ma{2an-a}{2an-a+1}\sim \left(\textstyle\frac{2-\sqrt{2}}{4}\right)\psi^{2an-a+1}.$$
So
$$\lim_{n\to\infty} \frac{\ma{2an-2a}{2an}}{\ma{2an-a}{2an-a+1}}
=\frac{3^{2a-1}\left(\textstyle\frac{2-\sqrt{2}}{4}\right)^{\!2a}\psi^{2an}}{  \left(\textstyle\frac{2-\sqrt{2}}{4}\right)\psi^{2an-a+1} }
=\Big(\frac{3(2-\sqrt{2})}{4}\Big)^{2a-1}\psi^{a-1}=\frac{3^{2a-1}}{2^{3a-1}}(2-\sqrt{2})
$$
This proves \eqref{man}.

Finally, for $a=8$,  the right hand side of \eqref{man} is $1.00200118\cdots>1$. 
\end{proof}  
\begin{remark}
Let $a''$ be the solution of the equation $\lim_{q\to\infty} \frac{\ma{q-a''-1}{q+a''-1}}{\ma{q-1}{q}}=1$. Equivalently, by \eqref{man}, $a''$ satisfies the equation
$\frac{3^{2a''-1}}{2^{3a''-1}}(2-\sqrt{2})=1$, thus
$(\frac{9}{8})^{a''}=\frac{3}{2(2-\sqrt{2})}$, $a''=\frac{\ln\frac{3}{2(2-\sqrt{2})}}{\ln(9/8)}$.
Let $b$ be the corresponding slope, that is,
$$b=-\frac{a''}{a''-1}
=-\frac{\frac{\ln\frac{3}{2(2-\sqrt{2})}}{\ln(9/8)}}{\frac{\ln\frac{3}{2(2-\sqrt{2})}}{\ln(9/8)}-1}
=-\frac{{\ln\frac{3}{2(2-\sqrt{2})}}}{{\ln\frac{3}{2(2-\sqrt{2})}}-{\ln\frac{9}{8}}}
=-\frac{\ln\frac{3(2+\sqrt{2})}{4}}{\ln\frac{2(2+\sqrt{2})}{3}}= -1.14320438\cdots$$                                                             
We conjecture that $b$ is best replacement of $-8/7$ in Theorem \ref{main thm} (a): 
\end{remark} 

\begin{conjecture}\label{increasing slope}
The Markov distances increase with $x$ along any line with slope $\ge -\frac{\ln\frac{3(2+\sqrt{2})}{4}}{\ln\frac{2(2+\sqrt{2})}{3}}$.
\end{conjecture}

For a line whose slope is not within the range given in Conjectures \ref{decreasing slope} and \ref{increasing slope}, we conjecture that the Markov numbers form a strictly anti-unimodal sequence provided there are enough lattice points on the line. More precisely, we propose the following. 

\begin{conjecture}\label{decreasing-increasing slope}
Given a line $y=ax+b$ whose slope $a$ is rational and satisfies $-\frac{\ln\frac{3+\sqrt{5}}{2}}{\ln \frac{3(1+\sqrt{5})}{2\sqrt{5}}}< a < -\frac{\ln\frac{3(2+\sqrt{2})}{4}}{\ln\frac{2(2+\sqrt{2})}{3}}$, we let $(q_1,p_1),\ldots,(q_n,p_n) $ be the  lattice points on the line that satisfy $1\le p_i\le q_i$, and we arrange them such that $q_1<q_2< \cdots <q_n$.  Then there exists $b_0>0$ such that for all lines $y=ax+b$ with $b\ge b_0$ and contain at least a lattice points, the sequence of Markov distances $m_{q_i,p_i}$ is strictly anti-unimodal, that is,
\[ m_{q_1,p_1} >  m_{q_2,p_2} > \cdots >  m_{q_j,p_j} < m_{q_{j+1},p_{j+1}} < \cdots < m_{q_n,p_n}, \text{ for some }\ 2\le j\le n-1.\]
\end{conjecture}

\begin{remark}
In this conjecture we do not require that the coordinates $(q_i,p_i)$ are relatively prime. 
\end{remark}

\medskip
We now return to the proof of our main result. The next lemma proves Theorem \ref{main thm} (a) in the case where the slope is equal to $-\frac{7}{8}$.
\begin{lemma}\label{lem 78}  Let $p<q$ be positive integers. Then $\ma{p}{q}<\ma{p-8}{q+7}$.
In particular, the Markov numbers increase with $x$ on any line of slope $a=-\frac{7}{8}$. \end{lemma}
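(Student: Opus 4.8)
The plan is to transcribe the proof of part (b) into the Pell setting: I reduce the inequality $m_{q,p}<m_{q+7,p-8}$ to the limit already computed in Lemma \ref{lim m/m 2} by sliding the edge joining the two points out to a reference edge anchored on the Pell diagonal, using the Ptolemy monotonicity of Lemma \ref{compare two edges}. Throughout I interpret $m_{q,p}$ as the Markov distance $|O(q,p)|$, which is defined for every lattice point; this is what lets me ignore the sign of $p-8$ and avoid a separate discussion of the range $p\le 8$. So, writing $F=(q,p)$ and $E=(q+7,p-8)$, the goal is exactly $|OF|<|OE|$. Note that $O,E,F$ are never colinear, since $\det\left(\begin{smallmatrix} q+7 & p-8\\ q & p\end{smallmatrix}\right)=8q+7p\neq 0$.

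First I would fix a large integer $q'$ and introduce the reference points $F'=(q',q'-1)$, lying on the Pell diagonal where $m_{q',q'-1}=\calp_{2q'-1}$, and $E'=(q'+7,q'-9)$, so that $E,F,F',E'$ form a parallelogram under the single translation $\overrightarrow{FF'}=\overrightarrow{EE'}=(q'-q,\,q'-1-p)$. To feed this into Lemma \ref{compare two edges} I must check that this translation vector is a genuine positive combination $s\,\overrightarrow{EO}+t\,\overrightarrow{OF}$ with $s,t>0$. Solving the resulting $2\times 2$ linear system by Cramer's rule, the determinant is $-(8q+7p)$ and the two numerators have leading terms $(p-q)\,q'$ and $(p-q-15)\,q'$ in $q'$; since $p<q$ both numerators are large and negative against a negative determinant, so $s,t\to+\infty$, and in particular $s,t>0$ for all sufficiently large $q'$.

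With its hypotheses verified, Lemma \ref{compare two edges} gives $|OE|\,|OF'|\ge |OE'|\,|OF|$, that is $\frac{|OE|}{|OF|}\ge \frac{|OE'|}{|OF'|}=\frac{m_{q'+7,q'-9}}{m_{q',q'-1}}$. The left-hand side is independent of $q'$, so I may let $q'\to\infty$ and apply Lemma \ref{lim m/m 2} to conclude $\frac{|OE|}{|OF|}\ge \lim_{q'\to\infty}\frac{m_{q'+7,q'-9}}{m_{q',q'-1}}>1$, hence $m_{q,p}=|OF|<|OE|=m_{q+7,p-8}$, as claimed. The final sentence about lines of slope $-\frac87$ then follows by chaining: two consecutive lattice points on such a line differ by the primitive vector $(7,-8)$, so the Markov number strictly increases at every step and therefore increases with $x$ along the entire line.

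The main obstacle is not in this argument but is already spent in the input Lemma \ref{lim m/m 2}, whose conclusion hinges on the razor-thin estimate $3^{15}\left(\frac{2-\sqrt2}{4}\right)^{15}(3+\sqrt8)^{7}>1$, whose left-hand side exceeds $1$ only by about $0.002$; this sliver is precisely why $-\frac87$ is the threshold slope and why one is forced onto the Pell reference rather than the Fibonacci reference of Lemma \ref{lim m/m}, whose corresponding limit here would drop below $1$. Within the present lemma the only delicate points are bookkeeping ones: orienting Lemma \ref{compare two edges} so that the slow-growing Pell ratio lands as a \emph{lower} bound for $|OE|/|OF|$ (so that a limit exceeding $1$ is usable), and confirming the positivity of the translation coefficients $s,t$ above.
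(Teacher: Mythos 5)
Your proof is correct and follows essentially the same route as the paper's: the same reference points $E'=(q'+7,q'-9)$ and $F'=(q',q'-1)$ on the Pell diagonal, the same verification that the coefficients $s,t$ in Lemma \ref{compare two edges} are positive for $q'$ large (your Cramer's-rule sign analysis agrees with the paper's explicit matrix inverse), and the same passage to the limit via Lemma \ref{lim m/m 2}. Your additional remarks---non-colinearity via $\det=7p+8q\neq 0$, the Markov-distance interpretation of $m_{q+7,p-8}$ covering $p\le 8$, and reading the slope as $-\frac{8}{7}$ (the statement's ``$-\frac{7}{8}$'' is a typo)---merely make explicit points the paper leaves implicit.
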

\begin{proof}
 Consider the points $E=(q+7,p-8), F=(q,p), E'=(q'+7,q'-9), F'=(q',q'-1)$. We verify that, when $q'$ is sufficiently large, the condition 
 $\overrightarrow{FF'}= s\overrightarrow{EO}+t\overrightarrow{OF}$ with $s,t\in \mathbb{R}_{>0}$ in Lemma \ref{compare two edges} holds. Indeed, 
$$\begin{bmatrix}s\\t\end{bmatrix}
=
\begin{bmatrix}-q-7&q\\-p+8&p\end{bmatrix}^{-1}\begin{bmatrix}q'-q\\q'-p-1\end{bmatrix}
=
\frac{1}{7p+8q}\begin{bmatrix}(q-p)q'-q\\ (q-p+15)q'-7p-8q-7\end{bmatrix}
$$
which is in $\mathbb{R}_{>0}^2$ as $q'\to\infty$ because $q>p$.
Thus we can apply Lemma \ref{compare two edges} to get an inequality
\[|OE|\,|OF'|\ge |OE'|\,|OF| \qquad
\textup{ and thus }  \qquad \frac{\ma{p-8}{q+7}}{\ma{p}{q}}\ge \frac{\ma{q'-9}{q'+7}}{\ma{q'-1}{q'}}.\]
%
The limit as $q'\to \infty$  of the right hand side in the last inequality  is greater that 1, by Lemma \ref{lim m/m 2}. Thus the left hand side is also greater than 1. 
\end{proof}

\begin{proof}[Proof of Theorem \ref{main thm}(a)]
 Let $(q_1,p_1), (q_2,p_2)$ be two lattice points satisfying $0<p_1<q_1$, $0<p_2<q_2$
 such that $q_1<q_2$ and the slope of the line segment from $(q_1,p_1)$ to $ (q_2,p_2)$ is greater or equal to $ -\frac{8}{7}$, that is  $(p_2-p_1)/(q_2-p_1) \ge -\frac{8}{7}$. We need to show that $\ma{p_1}{q_1}<\ma{p_2}{q_2}$. Since we already know that the conclusion is true if $p_2\ge p_1$, we assume $p_2<p_1$ in the rest of the proof. 
 We also may assume that the slope is strictly smaller that $ -\frac{8}{7}$ because of Lemma \ref{lem 78}.

Consider the line $L$ through the point $(q_1,p_1)$ with slope $ -\frac{8}{7}$. If there exists a lattice point $(q_3,p_3)$  on $L$  that lies  (weakly) southwest of  $(q_2,p_2)$, then  Theorem \ref{thm 1new} implies  $\ma{p_1}{q_1}<\ma{p_3}{q_3}<\ma{p_2}{q_2}$ and we are done. See the left picture in Figure \ref{fig 62_2}.

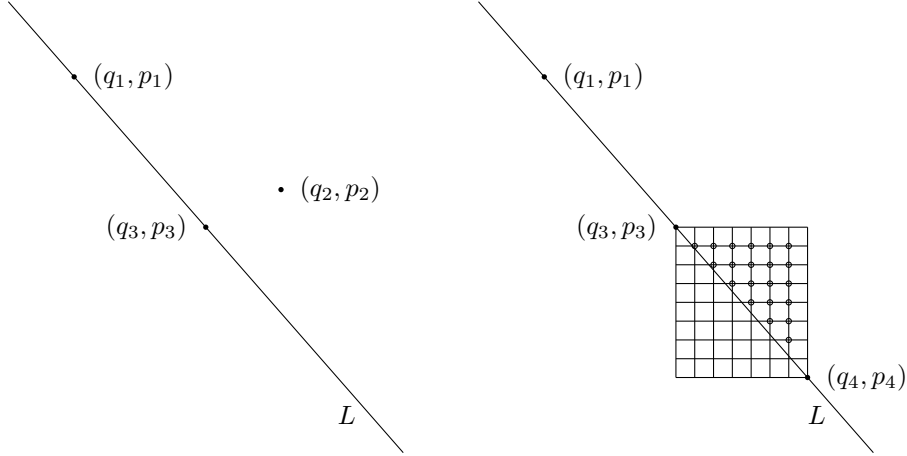
\begin{figure}[h]
\begin{tikzpicture}[scale=.25]
\node (v1)[label=right: ${(q_1,p_1)}$] at (-5,19) {};
\node (v2)[label=right: ${(q_2,p_2)}$] at (6,13) {};
\node (v3)[label=left: ${(q_3,p_3)}$] at (2,11) {};
\draw  (-8.5,23)--(12.5,-1);
\node at (9.5,1){$L$};
\fill (v1) circle[radius=4pt];
\fill (v2) circle[radius=4pt];
\fill (v3) circle[radius=4pt];

\begin{scope}[shift={(25,0)}]

\foreach \x in {2,...,9}
{
\draw (\x,11) -- (\x,3);
}
\foreach \y in {3,...,11}
{
\draw (2,\y) -- (9,\y);
}
\node (v1)[label=right: ${(q_1,p_1)}$] at (-5,19) {};
\node (v3)[label=left: ${(q_3,p_3)}$] at (2,11) {};
\node (v4)[label=right: ${(q_4,p_4)}$] at (9,3) {};
\draw  (-8.5,23)--(12.5,-1);
\node at (9.5,1){$L$};
\fill (v1) circle[radius=4pt];
\fill (v3) circle[radius=4pt];
\fill (v4) circle[radius=4pt];
\foreach \x in {3,...,8}
{
  \draw(\x,10) circle[radius=4pt];
}
\foreach \x in {4,...,8}
{
  \draw(\x,9) circle[radius=4pt];
}
\foreach \x in {5,...,8}
{
  \draw(\x,8) circle[radius=4pt];
}
\foreach \x in {6,...,8}
{
  \draw(\x,7) circle[radius=4pt];
}
\foreach \x in {7,...,8}
{
  \draw(\x,6) circle[radius=4pt];
}
\foreach \x in {8,...,8}
{
  \draw(\x,5) circle[radius=4pt];
}
\end{scope}
\end{tikzpicture}

\caption{Proof of  Theorem \ref{main thm} (a). Slope of $L$ is $-8/7$. Left: the case $q_3\le q_2$ and $p_3\le p_2$; Right: the other case, where $(q_2,p_2)$ is one of the 21 circled points.}\label{fig 62_2}
\end{figure}

The only unproved situation is when there are two consecutive lattice points $(q_3,p_3)$ and  $(q_4,p_4)=(q_3+7,p_3-8)$ such that $q_3<q_2<q_4$ and $p_3>p_2>p_4$. Then there are precisely 21 possibilities  for $(q_2,p_2)$, namely the lattice points in the interior of the right triangle with vertices $(q_3,p_3),(q_4,p_3), (q_4,p_4)$; see the right picture in Figure \ref{fig 62_2}. Observe that any line that passes through $(q_3,p_3)$ and one of these 21 points has slope $\ge -1$. Thus $\ma{p_2}{q_2}>\ma{p_3}{q_3}$. On the other hand, we have $\ma{p_1}{q_1}<\ma{p_3}{q_3}$ from Lemma \ref{lem 78} and thus we conclude that $\ma{p_2}{q_2}>\ma{p_1}{q_1}$.
\end{proof}

\section{A comment on the Markov distance}\label{sect 5}
In this section we show that a modified version of the Markov distance satisfies the triangle inequality.
\begin{corollary}
Let $A,B,C$ be distinct lattice points. Then $$3\, |AB|\cdot|BC|\ge|AC|$$
and the equality holds if and only if $\overrightarrow{AB}=\overrightarrow{BC}=\left(
\begin{smallmatrix}
p\\q 
\end{smallmatrix}
\right)$ where $p,q$ are relatively prime.

As a consequence, if we define $d:\mathbb{Z}^2\times\mathbb{Z}^2\to \mathbb{R}_{\ge0}$ as 
$$d(A,B)=\begin{cases}
&\ln(3|AB|), \textrm{ if $A\neq B$};\\
&0, \textrm{ if $A=B$}.
\end{cases}$$
then $d(A,B)+d(B,C)\ge d(A,C)$ for any $A,B,C\in\mathbb{Z}^2$. Moreover, the equality holds if $A=B$, or $B=C$, or $\overrightarrow{AB}=\overrightarrow{BC}=(p,q)$ where $p,q$ are relatively prime.
\end{corollary}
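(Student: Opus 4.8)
The plan is to reduce the whole statement to the single inequality $3\,|AB|\,|BC|\ge |AC|$ for distinct lattice points. Indeed, once this is known, the triangle inequality for $d$ follows by taking logarithms: for distinct $A,B,C$ it reads $\ln(9\,|AB|\,|BC|)\ge\ln(3\,|AC|)$, which is exactly $3\,|AB|\,|BC|\ge|AC|$, while the degenerate cases $A=B$ or $B=C$ give equality directly from $d(A,A)=0$. Writing $u=\overrightarrow{AB}$ and $w=\overrightarrow{BC}$, and using that $|AB|$ depends only on the difference vector, the core inequality is the purely vectorial statement $3\,m_u\,m_w\ge m_{u+w}$. The essential observation is that the only tool available to bound $m_{u+w}=|AC|$ \emph{from above} is the ``shortest curve'' Theorem~\ref{thm 1}, namely $|AC|\le|\gamma|$ for every generalized arc $\gamma$ from $A$ to $C$; hence the proof must produce a good competitor arc.

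The competitor I would use is the following. Let $\eta$ be the generalized arc from $A$ to $C$ obtained by concatenating the left deformations $\gamma^L_{AB}$ and $\gamma^L_{BC}$ at the common lattice point $B$, deformed to pass on the left of $B$ (this is exactly the local behaviour of a left deformation at an intermediate lattice point, cf.\ the proof of Lemma~\ref{lem 21}). Theorem~\ref{thm 1} then gives $|AC|\le|\eta|$, with equality precisely when $\eta$ is homotopic to a geodesic $\gamma^L_{AC}$ or $\gamma^R_{AC}$; this happens exactly when $A,B,C$ are collinear with $B$ between $A$ and $C$, in which case $\eta=\gamma^L_{AC}$.

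The heart of the matter is to show $|\eta|\le 3\,m_u\,m_w$, with equality iff $u=w$ is primitive. The factor $3$ is the same one occurring in Lemma~\ref{first few terms}, where $m_{2v}=f_2=3f_1^2=3m_v^2$, and it ultimately comes from the Markov relation of Lemma~\ref{lemABC} (equivalently, the value $3$ of the loop encircling the puncture). The snake graph of $\eta$ is the graph of $\gamma^L_{AB}$ joined to the graph of $\gamma^L_{BC}$ through the gadget of tiles created by passing around $B$, and I would evaluate its number of perfect matchings by a skein resolution at this bent junction, showing that the ``around $B$'' gadget multiplies the product $m_u\,m_w$ by at most $3$. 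The collinear case is clean and self-contained: there $\eta=\gamma^L_{AC}$ and $m_u=f_a,\ m_w=f_b,\ m_{u+w}=f_{a+b}$ for $u=av,\ w=bv$ with $v$ primitive, so the claim becomes $f_{a+b}\le 3f_af_b$; using the closed form $f_n=\tfrac{c}{\sqrt{9c^2-4}}(\alpha^n-\alpha^{-n})$ of Lemma~\ref{first few terms} this reduces to $2(\alpha^{a+b-1}+\alpha^{-(a+b-1)})\ge(\alpha+\alpha^{-1})(\alpha^{a-b}+\alpha^{b-a})$, which is immediate from the monotonicity of $k\mapsto\alpha^{k}+\alpha^{-k}$ and isolates the equality to $a=b=1$. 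Combining $|AC|\le|\eta|\le 3m_um_w$ then gives the inequality, and the two equality analyses together force $A,B,C$ collinear with $B$ between them (equality in Theorem~\ref{thm 1}) and $u=w$ primitive (equality in this step), i.e.\ $\overrightarrow{AB}=\overrightarrow{BC}$ primitive.

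The step I expect to be the main obstacle is the evaluation $|\eta|\le 3m_um_w$ in the non-collinear case: turning the slogan ``passing around the lattice point $B$ multiplies by $3$'' into a rigorous matching count for the grafted snake graph of the bent arc $\eta$. I anticipate this will need either the snake-graph connected-sum calculus together with Lemma~\ref{lemABC} to supply the factor $3$, or, as an alternative route that avoids the explicit count, an induction on $m_{u+w}$ that peels off one lattice point at a time and reduces to empty (unimodular) triangles, where Lemma~\ref{lemABC} applies directly, using the Ptolemy inequality (Corollary~\ref{Ptolemy inequality}) and the monotonicity Theorem~\ref{thm 1new} to control the reduction.
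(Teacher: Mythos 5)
There is a genuine gap: your whole argument hinges on the junction bound $|\eta|\le 3\,m_u m_w$ for the grafted arc $\eta$ in the non-collinear case, and you leave precisely this step unproved, offering only two speculative routes. Neither is licensed by the tools in the paper. The skein relations (Theorem \ref{skein relations}) resolve \emph{interior} crossings of a multicurve, whereas $\gamma^L_{AB}$ and $\gamma^L_{BC}$ meet at the lattice point $B$, i.e.\ at a shared endpoint (a marked point), where no smoothing relation is available; so ``skein resolution at the bent junction'' is not a defined operation here, and the factor $3$ does not drop out of any single resolution. A transfer-matrix count over the connector tiles created by passing around $B$ (the inserted crossing subsequence $1,3,2$ from the proof of Lemma \ref{lem 21}) might well succeed, but it is a nontrivial computation you have not carried out. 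There is also a secondary error in your equality analysis: equality $|AC|=|\eta|$ in Theorem \ref{thm 1} does \emph{not} force $A,B,C$ collinear. For $A=(0,0)$, $B=(1,0)$, $C=(1,1)$ the concatenation $\eta$ is isotopic to $\ell_{AC}$, since the triangle $ABC$ contains no other lattice points, yet the three points are not collinear. Hence the ``only if'' direction of the equality characterization would have to come entirely out of the unproved junction bound, which compounds the gap.

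The paper avoids all of this with a doubling trick, and it is worth internalizing because it refutes your opening premise that Theorem \ref{thm 1} is the only way to bound $|AC|$ from above. The paper first proves the midpoint case $\overrightarrow{AB}=\overrightarrow{BC}$, namely $3f_n^2\ge f_{2n}$, by the same closed-form computation from Lemma \ref{first few terms} that you use for your collinear case (with equality exactly when $n=1$, i.e.\ when the common vector is primitive). For general $A,B,C$ it then forms the parallelogram $ACDE$ with center $B$, so that $B$ is the midpoint of both diagonals; the midpoint case gives the \emph{upper} bounds $3|AB|^2\ge |AD|$ and $3|BC|^2\ge |CE|$, while the Ptolemy inequality (Corollary \ref{Ptolemy inequality}) applied to $ACDE$ gives the \emph{lower} bound $|AD|\,|CE|\ge |AC|^2+|AE|^2\ge |AC|^2$. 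Multiplying and taking square roots yields $3\,|AB|\,|BC|\ge|AC|$, and equality forces $A=E$, i.e.\ $\overrightarrow{AB}=\overrightarrow{BC}$, together with $n=1$, i.e.\ primitivity. So the needed upper bound on $|AC|$ comes from the midpoint identity plus a Ptolemy lower bound on the product of the diagonals, with no competitor arc and no grafted snake graph; to salvage your route you would have to actually prove the junction bound, for instance by the snake-graph computation sketched above.
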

\begin{proof}
First we prove the inequality for the special case when $B$ is the midpoint of the segment $\ell_{AC}$ (that is, $\overrightarrow{AB}=\overrightarrow{BC}$). For this, we shall use Lemma \ref{first few terms} and the notations therein.
Let $\frac{p}{q}$ be the slope of the line segment $\ell_{AB}$. Then $\ell_{AB}$ runs through $n+1$ lattice points $A,A+(q,p),,A+2(q,p),\ldots, A+n(q,p)=B$.
Then we want to show
$$3|AB|^2\ge |AC|$$
or equivalently, $3f_n^2\ge f_{2n}$ where $f_n=\frac{c}{\sqrt{9c^2-4}}(\alpha^n-\alpha^{-n})$ where $\alpha=(3c+\sqrt{9c^2-4})/2>1$ is the larger root of $x^2-3cx+1=0$ (where $c\in\mathbb{Z}_{>0}$).  We have
\[\begin{array}{rcl}
3f_n^2-f_{2n}&=& 3\left( \frac{c}{\sqrt{9c^2-4}} (\za^n-\za^{-n})\right)^2-\frac{c}{\sqrt{9c^2-4}}(\za^{2n}-\za^{-2n})\\
&=& \frac{c}{\sqrt{9c^2-4}} (\za^n-\za^{-n}) 
\left(3 \frac{c}{\sqrt{9c^2-4}} (\za^n-\za^{-n})-(\za^n-\za^{-n})\right)\\
&=& \frac{c}{\sqrt{9c^2-4}} (\za^n-\za^{-n}) 
\left((3 \frac{c}{\sqrt{9c^2-4}} -1)(\za^n-\za^{-n})\right)\\
\end{array}\]
and each term in this product is positive, since $ \frac{c}{\sqrt{9c^2-4}} >1, \za>1$ and $n>1$. Thus $3f_n^2\ge f_{2n}$.

Now for $A,B,C$ in general, consider the parallelogram $ACDE$ with center $B$. 
\begin{figure}[h]
\begin{tikzpicture}[scale=.7]
\draw (0,0)--(3,1)--(2,1.5)--(0,0);
\draw[dashed] (0,0)--(1,2)--(4,3)--(3,1) (1,2)--(2,1.5)--(4,3);
\node [label=left: $A$] at (0,0) {};
\node [label=below: $B$] at (2,1.7) {};
\node [label=below: $C$]at (3,1) {};
\node [label=right: $D$]at (4,3) {};
\node [label=above: $E$]at (1,2) {};
\end{tikzpicture}
\end{figure}

 Then
by the first part of the proof,  $3|AB|^2\ge |AD|$ and $3|BC|^2\ge |BE|$.
By Corollary \ref{Ptolemy inequality}, $|AD|\cdot |CE|\ge |AC|^2+|AE|^2\ge |AC|^2$. 
So we have $(3|AB|^2)(3|BC|^2)\ge|AC|^2$, taking square roots on both sides gives $3|AB|\cdot|BC|\ge|AC|$.

The consequence follows immediately.
\end{proof}

At the end of the paper, we propose a conjecture that implies the uniqueness conjecture of Markov numbers. 

We call a real number $r$ a Markov distance if there exist two lattice points $A,B$ such that $r$ is the Markov distance $|AB|$ between $A$ and $B$. Note that a Markov distance is always a nonnegative integer.
\begin{conjecture}
[Uniqueness conjecture of Markov distances]
Every Markov distance is equal to  $m_{q,p}$ for a unique the pair of integers $(p,q)$ satisfying $0\le p\le q$. 
\end{conjecture}

\medskip

\noindent {Acknowledgement}. {The authors wish to thank the referee for providing many valuable suggestions. }

{}
\end{document}